\numberwithin{equation}{section}
\theoremstyle{plain}
\newtheorem{thm}{Theorem}[section]
\newtheorem{proposition}[thm]{Proposition}
\newtheorem{cor}[thm]{Corollary}
\newtheorem{lemma}[thm]{Lemma}
\theoremstyle{definition}
\newtheorem{defn}[thm]{Definition}
\theoremstyle{remark}
\newtheorem{rmk}[thm]{Remark}
\newcommand{\Rmnum}[1]{\expandafter\@slowromancap\romannumeral #1@}
\thanks{The author is supported by the Young Scientist Program of the Ministry of Science and Technology of China No. 2021YFA1002200.}
\begin{document}

\title{Estimates of the Kobayashi metric and Gromov hyperbolicity on convex domains of finite type}
\author{ Hongyu Wang}

\address{ School of Science, Beijing University of Posts and Telecommunications, Beijing 100876, China}

\email{ wanghyu@bupt.edu.cn}

\begin{abstract}
In this paper we give an local estimate for the Kobayashi distance on a bounded convex domain of finite type, which relates to a local pseudodistance near the boundary. The estimate is precise up to a bounded additive term. Also we conclude that the domain equipped with the Kobayashi distance is Gromov hyperbolic which gives another proof of the result of Zimmer.

\end{abstract}

\maketitle

\section{\noindent{{\bf Introduction}}}
The Kobayashi metric is an important invariant metric in several complex variables. Although in general there is no exact formula for this metric , much progress has been made in the study of the boundary behaviour. For the latest results for the estimates we refer to the survey\cite{2005Invariant}.  Much less is known about the boundary behaviour of the distance function of the Kobayashi metric. In \cite{balogh2000gromov}, Balogh and Bonk obtained a sharp estimate of the Kobayashi metric on bounded strongly pseudoconvex domains in $\mathbb{C}^n$. Also they related the distance function with the Carnot-Caratheodory metric on the boundary. Later Herbort \cite{2005Estimation} studied bounded pseudoconvex domains of finite type in $\mathbb{C}^2$; he related the distance function to a "pseudodistance" function introduced by Catlin\cite{1989Estimates} and obtained effective estimates of the Kobayashi metric and other invariant metrics. Following their work, in this paper we will study the Kobayashi metric on bounded convex domains of finite type in $\mathbb{C}^n$ and give a precise estimate of the Kobayashi distance locally up to a bounded additive term.

Let $\Omega$ be a smoothly bounded domain and convex in some neighbourhood $U$ of $p\in\partial\Omega$ and suppose $p$ is a point of finite type in sense of D'Angelo. After perhaps shrinking $U$, McNeal \cite{1994Estimates} defined a local pseudodistance $M(\cdot,\cdot)$ in $\Omega\cap U$(see Section 3.2 for the definition), and
we introduce the function $g:\Omega\cap U\times\Omega\cap U\rightarrow \mathbb{R}$ by
$$g(x,y)=\log\left[\frac{M(x,y)+\delta_{\Omega}(x)\vee\delta_{\Omega}(y)}{\sqrt{\delta_{\Omega}(x)\delta_{\Omega}(y)}}\right]$$
where $M$ is the local pseudodistance, $a\vee b:=\max\{a,b\}$ and $\delta_{\Omega}(x)$ is the shortest distance from $x$ to $\partial\Omega$.

Our main result is the following:

\begin{thm}\label{main}
Let $\Omega\subset\mathbb{C}^n$ be smoothly bounded and convex in some neighborhood of $p\in\partial\Omega$ and suppose that $p$ is a point of finite type in sense of D'Angelo. Then there exists a neighborhood $U$ of $p\in\partial\Omega$ and a constant $C$ such that for any $x,y\in\Omega\cap U$,
$$g(x,y)-C\leq K_{\Omega}(x,y)\leq g(x,y)+C.
$$
where $K_{\Omega}$ is the Kobayashi distance.

\end{thm}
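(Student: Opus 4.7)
The plan is to prove the two inequalities separately, both exploiting McNeal's theory of polydiscs $P(\zeta, r)$ adapted to the convex finite type boundary point $p$, together with the associated minimal basis and type multi-indices $\tau_j(\zeta,r)$. The pseudodistance $M(x,y)$ is, up to uniform constants, the smallest $r>0$ for which $y$ lies in a McNeal polydisc of radius $r$ around $x$, so the theorem is really a quantitative bridge between $K_\Omega$ and the polydisc geometry. McNeal's sharp infinitesimal estimates of $k_\Omega$ in terms of the $\tau_j$'s will serve as the engine of both directions.

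For the upper bound $K_\Omega(x,y)\le g(x,y)+C$, I would run a three-leg path argument in the spirit of Balogh--Bonk and Herbort. Put $r=M(x,y)+\delta_\Omega(x)\vee\delta_\Omega(y)$ and push $x,y$ along inner normals to points $x^*,y^*$ with $\delta_\Omega(x^*)\asymp\delta_\Omega(y^*)\asymp r$. Each normal push is realized by a suitable embedded analytic disc along which the Kobayashi distance is dominated by the Poincar\'e distance on the unit disc, giving $K_\Omega(x,x^*)\le\tfrac{1}{2}\log(r/\delta_\Omega(x))+O(1)$ and similarly at $y$. By the definition of $r$ and the transformation properties of McNeal polydiscs, the polydisc $P(x^*,Cr)$ is contained in $\Omega$ and contains $y^*$, so the decreasing property applied to the inclusion of that polydisc yields $K_\Omega(x^*,y^*)=O(1)$. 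Summing the three legs reconstructs exactly $g(x,y)+O(1)$.

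For the lower bound $K_\Omega(x,y)\ge g(x,y)-C$, I would integrate the infinitesimal Kobayashi metric along an arbitrary curve $\gamma$ from $x$ to $y$, combining the normal estimate $k_\Omega(\zeta;v)\ge|\langle v,\nu_\zeta\rangle|/(2\delta_\Omega(\zeta))$ (from the one-variable Schwarz lemma applied to a supporting half-space at $\zeta$) with McNeal's tangential estimate $k_\Omega(\zeta;v)\gtrsim\sum|v_j|/\tau_j(\zeta,\delta_\Omega(\zeta))$ in the minimal basis. The normal piece integrates to the $-\tfrac{1}{2}\log(\delta_\Omega(x)\delta_\Omega(y))$ term, while the tangential piece must be shown to integrate to at least $\log(M(x,y)+\delta_\Omega(x)\vee\delta_\Omega(y))-O(1)$. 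The main obstacle is precisely this tangential control: a competing curve might try to dive deep into $\Omega$ where every $\tau_j$ is larger and the metric smaller, so one has to use monotonicity of $\tau_j(\zeta,t)$ in $t$ and a scale-invariance argument on McNeal polydiscs to show that the tangential cost only improves logarithmically with depth, while the normal cost of diving and returning already absorbs that improvement. The final bookkeeping is to control the multiplicative constants in McNeal's quasi-triangle inequality for $M$ and absorb them into the additive $O(1)$ error in the $\log$.
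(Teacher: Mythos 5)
The upper bound part of your proposal matches the paper's strategy in outline: push both points along inner normals to a common depth scale $r=M(x,y)+\delta_\Omega(x)\vee\delta_\Omega(y)$, show each normal push costs $\tfrac12\log(r/\delta_\Omega(\cdot))+O(1)$, and show the remaining "horizontal" hop costs $O(1)$. Where the paper differs in detail is the last leg: instead of invoking containment of a McNeal polydisk inside $\Omega$ and then using the decreasing property, the paper runs the straight segment $\overline{x'y'}$, uses convexity to bound $\delta_\Omega(z,x'-y')$ from below along the segment by $\delta_\Omega(x',x'-y')\wedge\delta_\Omega(y',x'-y')$, and then applies McNeal's estimate $\delta_\Omega(x',v)\asymp(\sum|a_i|\tau_i(x',|r(x')|)^{-1})^{-1}$ together with the doubling property of the $\tau_i$ to get $L_k(\overline{x'y'})\lesssim C^{1/2}$. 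Your variant is plausible but requires the engulfing property $P(x^*,cr)\subset\Omega$, which is not quoted in the paper and is a nontrivial extra ingredient; you should either prove it or replace it with the straight-line argument. You also omit the localization step (Lemma~\ref{local}) that reduces the theorem, stated only under local convexity near $p$, to the globally convex case; without it, the global estimates for convex domains (Lemmas~\ref{est1}--\ref{est4}) are not available.

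The lower bound is where your proposal has a genuine gap. You correctly identify the obstruction — a competing curve can dive deeply, making both the normal and tangential infinitesimal metrics small — but "integrate McNeal's tangential estimate, use monotonicity of $\tau_j$, and absorb the quasi-triangle constants into the $O(1)$ error" is not an argument that closes. Two concrete problems. First, the minimal-basis frame in which McNeal's tangential estimate is written changes from point to point along $\gamma$, so there is no canonical decomposition of $\gamma'(t)$ into "normal" plus "tangential" pieces that integrates to $\log M(x,y)$; the paper avoids this entirely by never integrating a tangential lower bound, instead using a discrete crossing-number argument (Lemma~\ref{polyest}: each exit from $P(q,c|r(q)|)$ costs a uniform $c_0>0$ in Kobayashi distance). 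Second, and more fatally, the quasi-triangle inequality for $M$ has a multiplicative constant $C>1$ that compounds when iterated over the many intermediate points produced by subdividing $\gamma$; "absorbing it into the additive $O(1)$" is impossible, since the number of subdivision points is not bounded. This is exactly why the paper first proves Lemma~\ref{pseudodistance}, that $M^\epsilon$ (for small $\epsilon$) satisfies $M^\epsilon(x,y)\leq 2\sum M^\epsilon(x_{i-1},x_i)$ with a constant $2$ \emph{independent of the number of terms}, and then runs a dyadic decomposition of $\gamma_1,\gamma_2$ by level sets $|r|=H/2^{k-j}$, splits into the alternative that some dyadic slab carries a large $M$-jump (giving many polydisk crossings via Lemma~\ref{polyest}) versus all jumps small (forcing $M^\epsilon(x,\gamma(t_1))\leq\tfrac18 M^\epsilon(x,y)$ so that the middle arc must carry a jump), and finally optimizes over the maximal depth $H$. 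None of these ingredients appear in your sketch, and without them the lower bound does not go through.
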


Following Balogh and Bonk, first we guess the "almost geodesics" between two distinct points. Then for the upper bound, the key point of the proof is to take advantage of the properties of polydisks constructed by Chen\cite{chen} and McNeal\cite{1994Estimates} and the convexity of the domain.
For the lower bound, inspired by Herbot\cite{2005Estimation}, we adapt the steps in \cite{balogh2000gromov} to this case by using the pseudodistance property of $M$.


In \cite{balogh2000gromov}, Balogh and Bonk investigated the {\it Gromov hyperbolicity} of the Kobayashi metric for bounded strictly pseudoconvex domains in $\mathbb{C}^n$, and they also gave a correspondence between the Gromov boundary and the Euclidean boundary which can be used in boundary extension problems for proper holomorphic mappings and quasi-isometric mappings. Later Zimmer \cite{Zimmer2016Gromov} obtained the Gromov hyperbolicity on bounded convex domains of finite type. Recently Fiacchi\cite{2021Gromov} obtained the Gromov hyperbolicity on bounded pseudoconvex domains of finite type in $\mathbb{C}^2$. Following the discussion in \cite{balogh2000gromov}, we show the Gromov hyperbolicity on  bounded convex domains of finite type which gives another proof for the result obtained by Zimmer. Actually we can also get a correspondence between the Gromov boundary and the Euclidean boundary of the domain.

 \begin{thm}\label{main2}
 Let $\Omega$ be a bounded convex domain with smooth boundary of finite type, then $(\Omega,K_{\Omega})$ is a Gromov hyperbolic space.
 \end{thm}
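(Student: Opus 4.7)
The plan is to derive Gromov hyperbolicity from Theorem~\ref{main}. Since the Gromov property is stable under bounded additive perturbations, I may replace $K_\Omega$ by the explicit expression $g$ on a collar neighborhood of $\partial\Omega$. A finite subcover $U_1,\dots,U_N$ of the compact boundary yields such a collar, and on the remaining compact piece $K_\Omega$ is equivalent to the Euclidean distance, has bounded diameter, and so contributes nothing to the coarse geometry. Hence it suffices to produce uniform constants in a Gromov-type inequality for $g$ on each $\Omega\cap U_i$ and then glue.

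I would verify the four-point Gromov condition
$$(x\mid y)_w \ \geq\ \min\bigl\{(x\mid z)_w,\ (z\mid y)_w\bigr\} - \delta$$
directly, where $(x\mid y)_w = \tfrac{1}{2}(K_\Omega(x,w)+K_\Omega(y,w)-K_\Omega(x,y))$. Substituting $g$ for $K_\Omega$ and writing $\Psi(a,b) := M(a,b)+\delta_\Omega(a)\vee\delta_\Omega(b)$, the factors $\delta_\Omega(w)$ and $\sqrt{\delta_\Omega(\cdot)}$ cancel identically from both sides, and the condition reduces to the Ptolemy-type inequality
$$\Psi(x,y)\,\Psi(z,w)\ \leq\ C\,\max\bigl\{\Psi(x,z)\,\Psi(y,w),\ \Psi(x,w)\,\Psi(y,z)\bigr\}.$$
To prove this I would expand the sum inside each $\Psi$ and, in each configuration determined by whether $M$ or $\delta_\Omega\vee\delta_\Omega$ dominates the individual factor, apply the quasi-triangle inequality $M(a,c)\leq C(M(a,b)+M(b,c))$ for McNeal's pseudodistance together with the elementary $\delta_\Omega(a)\vee\delta_\Omega(c)\leq \delta_\Omega(a)\vee\delta_\Omega(b)+\delta_\Omega(b)\vee\delta_\Omega(c)$. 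One of the two products on the right then dominates the left in every case. Once the inequality holds uniformly in each $\Omega\cap U_i$, global gluing is routine: configurations with vertices in distinct neighborhoods are routed through a fixed compact interior set whose bounded Kobayashi diameter is absorbed into the additive $\delta$.

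The main obstacle is the Ptolemy-type inequality itself, because McNeal's $M$ is only a quasi-pseudodistance built from anisotropic polydisks whose shape depends strongly on the base point; the pseudo-triangle behaviour of $M$ is coarse, and its constants must be tracked carefully across four points at once. The genuinely delicate case is when all four points cluster near a single boundary point so that the polydisks at different base points are comparable---there the inequality carries real geometric content and requires the finer structure of $M$ recorded in \cite{1994Estimates} together with the convexity of $\Omega$. When the points straddle distinct neighborhoods of $\partial\Omega$, every $M$-term is bounded below by a positive constant and the inequality is essentially automatic.
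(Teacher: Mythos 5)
Your treatment of the local step is essentially the paper's Lemma~\ref{gromov2}: setting $\Psi(a,b) = M(a,b) + \delta_\Omega(a)\vee\delta_\Omega(b)$ and deducing a multiplicative Ptolemy-type inequality from the quasi-triangle inequality for $M$ is exactly what the paper does (the paper abstracts it as: if $r_{ij}\leq Cr_{ji}$ and $r_{ij}\leq C(r_{ik}+r_{kj})$ then $r_{12}r_{34}\leq 4C^4(r_{13}r_{24}\vee r_{14}r_{23})$, which is cleaner than the case analysis you describe but equivalent in content). So far so good.

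The gap is the gluing step, which you dismiss as ``routine'' but which is genuinely the hard part. Two concrete problems. First, the function $g$ and the pseudodistance $M$ are only defined on $\Omega\cap U$ for a single boundary neighborhood $U$, and Theorem~\ref{main} only identifies $K_\Omega$ with $g$ up to an additive constant \emph{locally}. For a configuration of four points spread across two or more of the $U_i$, the quantity $\Psi(x,y)$ is literally undefined, so your last paragraph --- ``when the points straddle distinct neighborhoods \ldots every $M$-term is bounded below by a positive constant and the inequality is essentially automatic'' --- does not make sense as stated. Second, your claim that cross-neighborhood configurations are ``routed through a fixed compact interior set'' is precisely the nontrivial geometric input missing from the sketch: it is not automatic that a Kobayashi geodesic (or quasi-geodesic) joining two points that are each close to $\partial\Omega$ but in different boundary charts must pass through a fixed compact interior region; in principle it could hug the boundary all the way around. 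The paper proves exactly this via a visibility-type lemma (Lemma~\ref{visibility}: for $x,y\in\Omega\cap U$ there is $z\in[x,y]$ with $\delta_\Omega(z)\gtrsim M(x,y)\gtrsim |x-y|^L$), then switches from the four-point inequality to the equivalent thin-triangle characterization, which is better suited to this routing argument because it speaks of geodesics directly. Without a visibility estimate of this kind --- or an equivalent argument such as the quasi-geodesic stability approach of Bracci--Gaussier--Nikolov--Thomas cited in the paper --- the global conclusion does not follow from the local inequality, and your proof has a hole at exactly this point.
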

By calculus, we can show the Gromov hyperbolicity holds 'locally' since the estimate of the Kobayashi metric in Theorem \ref{main} is local. Then we need to get the 'global' Gomov hyperbolicity from the 'local' Gromov hyperbolicty. Similar interesting problems have been recently studied by Bracci, Gaussier, Nikolov and Thomas \cite{2022Local}. Their idea is to consider the stability of the quasi-geodesics and the proof can be adapted to this case. For the reader's convenience, we supply a short proof by using the Thin-triangle property of the Gromov hyperbolicity.

  As an application of Theorem \ref{main2}, we show the following boundary extension result for proper holomorphic mappings:
 \begin{cor}\label{1.3}
 Let $\Omega_{i}(i=1,2)$ be bounded convex domains with smooth boundary of finite type, and let $f:\Omega_1\rightarrow\Omega_2$ be a proper holomorphic map.
 If there exists $p_i\in\partial\Omega_i(i=1,2)$ and a sequence $\{x_j\}\in\Omega_1$ such that $\lim\limits_{j\rightarrow \infty} x_j=p_1$,and $\lim\limits_{j\rightarrow\infty}f(x_j)=p_2$, then there exists open sets $V_{i}\ni p_i $ and the local pseudodistances $M_i(i=1,2)$ such that $f$ extends to a continuous map $\bar{f}$ in $\overline{V}_1\cap \overline{\Omega}_1$ and
for any $\xi,\eta\in \overline{V}_1\cap \partial\Omega_1,$
$$M_2(\bar{f}(\xi),\bar{f}(\eta))\lesssim M_{1}(\xi,\eta)$$
 \end{cor}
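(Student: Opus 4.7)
The plan is to combine the local Kobayashi estimate of Theorem~\ref{main} with the Schwarz--Pick contraction $K_{\Omega_2}(f(x),f(y))\le K_{\Omega_1}(x,y)$ and a continuous boundary-extension argument drawn from Gromov hyperbolicity (Theorem~\ref{main2}), following the scheme used by Balogh--Bonk in the strictly pseudoconvex case.

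First I would derive a Hopf-type comparison of boundary distances. Applying Theorem~\ref{main} to both sides of $K_{\Omega_2}(f(x),f(o))\le K_{\Omega_1}(x,o)$ for a fixed interior base-point $o\in\Omega_1$, and using that $K_{\Omega}(z,o)=-\tfrac{1}{2}\log\delta_{\Omega}(z)+O(1)$ as $z$ approaches $\partial\Omega$, one obtains the lower bound $\delta_{\Omega_2}(f(x))\gtrsim\delta_{\Omega_1}(x)$ for $x$ near $p_1$. Together with the matching upper bound $\delta_{\Omega_2}(f(x))\lesssim\delta_{\Omega_1}(x)$ provided by Lipschitz-type boundary regularity of proper holomorphic maps between smoothly bounded convex domains of finite type, this yields $K_{\Omega_2}(f(x),f(o))=K_{\Omega_1}(x,o)+O(1)$.

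Combining this ``almost-isometry from $o$'' with the $1$-Lipschitz contraction yields a lower bound on Gromov products:
$$(f(x)\mid f(y))_{f(o)}\ge (x\mid y)_{o}-C\qquad\text{for }x,y\text{ near }p_1.$$
A direct computation using Theorem~\ref{main} and the expansion of $K(\cdot,o)$ shows that $(x\mid y)_{o}=-\tfrac{1}{2}\log M(x,y)+O(1)$ for $x,y$ sufficiently close to the boundary and mutually far apart (so that $M$ dominates $\delta\vee\delta$). Hence the Gromov-product inequality above is equivalent to $M_2(f(x),f(y))\lesssim M_1(x,y)$. The same Gromov-product inequality shows that $f$ preserves Gromov convergence of sequences up to an additive constant; together with the hypothesis $f(x_j)\to p_2$ and the identification of Gromov and Euclidean boundaries coming from Theorems~\ref{main} and \ref{main2}, $f$ extends continuously to $\overline{V}_1\cap\overline{\Omega}_1$ for some neighborhood $V_1$ of $p_1$. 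Passing to the limit $x\to\xi$, $y\to\eta$ with $\xi,\eta\in\overline{V}_1\cap\partial\Omega_1$ in $M_2(f(x),f(y))\lesssim M_1(x,y)$ then gives the desired $M_2(\bar f(\xi),\bar f(\eta))\lesssim M_1(\xi,\eta)$.

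The main obstacle is the upper Hopf bound $\delta_{\Omega_2}\circ f\lesssim\delta_{\Omega_1}$, which is not a consequence of Kobayashi contraction alone and must be secured from the boundary regularity of proper holomorphic maps between convex domains of finite type (e.g.\ via Diederich--Fornaess-type arguments), possibly only along the non-tangential sequences used to realize the limits. A secondary, more technical point is the identification of Gromov and Euclidean boundaries near $p_i$; this is already implicit in the proof of Theorem~\ref{main2} but should be isolated as a standalone lemma to make the boundary-extension step clean. Once these two ingredients are in hand, the rest is a routine combination of Theorems~\ref{main} and \ref{main2} with the general framework for Gromov hyperbolic spaces.
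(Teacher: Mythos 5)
Your proposal is correct in substance and identifies the same three essential ingredients as the paper: the local Kobayashi estimate (Theorem~\ref{main}), the Schwarz--Pick contraction $K_{\Omega_2}\circ(f\times f)\le K_{\Omega_1}$, and the comparison $\delta_{\Omega_2}(f(x))\lesssim\delta_{\Omega_1}(x)$. But the organization is genuinely different. The paper does not pass through Gromov products at all: it first establishes the continuous boundary extension by a contradiction argument that combines Lemma~\ref{visibility} (the geodesic $[f(x_n),f(y_n)]$ must pass near a fixed compact set if $\eta'\notin U_2$) with Lemma~\ref{est4}, the Schwarz--Pick inequality, Theorem~\ref{main} applied in $\Omega_1$, and a Hopf-type inequality $\delta_{\Omega_2}\circ f\lesssim\delta_{\Omega_1}$ (cited as Proposition~12 of a reference); the same ingredients then yield directly $M_2(f(x),f(y))\lesssim M_1(x,y)+\delta_{\Omega_1}(x)\vee\delta_{\Omega_1}(y)$, from which the boundary inequality follows by letting $x\to\xi$, $y\to\eta$. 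Your route instead first derives the global almost-isometry $K_{\Omega_2}(f(x),f(o))=K_{\Omega_1}(x,o)+O(1)$ from the two-sided Hopf estimate, then packages everything as a Gromov-product inequality $(f(x)\mid f(y))_{f(o)}\ge (x\mid y)_o-C$, from which both the boundary extension (via the Gromov-boundary identification, implicit in Theorem~\ref{main2}) and the $M$-inequality follow by translating Gromov products into $M$ via Theorem~\ref{main}. The two arguments produce the same intermediate inequality $M_2+\delta_2\vee\delta_2\lesssim M_1+\delta_1\vee\delta_1$; yours is somewhat more modular and fits the Schramm--Bonk framework cleanly, while the paper's is more self-contained and avoids invoking the Gromov-boundary machinery explicitly. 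One small caveat: your formula $(x\mid y)_o=-\tfrac12\log M(x,y)+O(1)$ should in general read $(x\mid y)_o=-\tfrac12\log\bigl(M(x,y)+\delta_\Omega(x)\vee\delta_\Omega(y)\bigr)+O(1)$; the restriction ``$M$ dominates'' is harmless only because the additive $\delta\vee\delta$ term disappears in the boundary limit, but the intermediate inequality must carry it. You correctly flag the upper Hopf bound $\delta_{\Omega_2}\circ f\lesssim\delta_{\Omega_1}$ as the one ingredient that is not a consequence of Kobayashi contraction; the paper likewise does not prove it but cites it, and the classical Hopf lemma applied to the psh function $r_2\circ f$ gives only the reverse inequality $\delta_{\Omega_2}\circ f\gtrsim\delta_{\Omega_1}$, so this boundary-regularity input is indeed a substantive external fact for both arguments.
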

 Suppose each boundary point is of finite type at most $L$. It is well known that the extension $\bar{f}$ is H\"{o}lder continuous with exponent $1/L$,  which can be easily deduced by the Corollary and the estimate of the local pseudodistance(see Section 3.2).

 The paper is organized as follows. In Section 2 we recall some definitions and preliminary results. Section 3 focuses on the constructions of the polydisk and local pseudodistance defined by McNeal. In Section 4 we give the proof of the Theorem \ref{main}. In Section 5 we recall some background on Gromov hyperbolic spaces and give the proof of Theorem \ref{main2}. Finally we present some applications concerning boundary extensions of mappings.

%
%
%
%
%
%
%
%
%
%
\section{\noindent{{\bf Preliminaries}}}
\subsection{Notation}\
(1) \:For $z\in\mathbb{C}^n$, let $|\cdot|$  denote the standard Euclidean
norm, and let $|z_1-z_2 |$ denote the standard Euclidean
distance of $z_1, \:z_2\in \mathbb{C}^n$.

(2) \:Given an open set $\Omega\subsetneq\mathbb{C}^n,\:x\in
\Omega$ and $v\in\mathbb{C}^n$, denote
$$\delta_{\Omega}(x)=\inf\left\{|x- \xi|:\xi\in\partial
\Omega\right\}.$$
Let $H:=x+\mathbb{C}\cdot v$, then denote
$$\delta_{\Omega}(x,v)=\inf\left\{|x- \xi|:\xi\in\partial
\Omega\cap H\right\}.$$

(3) \:For any curve $\gamma: I \rightarrow \mathbb C^n$, we denote the Kobayashi length by $L_k(\gamma)$.

(4) \:For all real numbers $a,b$, we denote $a\vee b:= \max\{a, \: b\}$ and $a\wedge b:= \min\{a, \:b\}$.

(5) \: For two open sets $U,V$ in $\mathbb{R}^n$,  $V\subset\subset U$ means $\overline{V}\subset U.$

(6) \: For two functions $f,g$, we write $f\lesssim g$ if there exists $C>0$ such that $$ f\leq C g.$$ We write $f\asymp g$ if $f\lesssim g$ and $g\lesssim f$.
\subsection{The Kobayashi metric}

Given a domain $
\Omega \subset \mathbb{C}^{n}\: (n\geq 2)$, the (infinitesimal)
Kobayashi metric is the pseudo-Finsler metric defined by
$$k_{
\Omega}(x ; v)=\inf \left\{|\xi| : f \in \operatorname{Hol}(\mathbb{D}, \Omega), \:\text { with } f(0)=x,
f'(0)\cdot\xi=v\right\}.$$ Define the Kobayashi length of any curve $\sigma:[a,b]\rightarrow \Omega$
to be
$$L_k(\sigma)=\int_{a}^{b} k_{\Omega}\left(\sigma(t) ; \sigma^{\prime}(t)\right) d
t.$$
It is a consequence of a result due to Venturini \cite{VenturiniPseudodistances}, which is based on an observation by Royden \cite{royden1971remarks}, that
the Kobayashi pseudodistance can be given by:
\begin{align*}
K_{\Omega}(x, y)&=\inf_\sigma \big\{L_k(\sigma)| \:\sigma :[a, b]
\rightarrow \Omega \text { absolutely continuous curve }\\
& \text { with } \sigma(a)=x \text { and } \sigma(b)=y \big\}.
\end{align*}
The main property of the Kobayashi pseudodistance is that it is contracted by holomorphic maps: if $f: \Omega_1 \rightarrow \Omega_2$ is a holomorphic map, then
$$
\forall z, w \in \Omega_1 \quad {K}_{\Omega_2}(f(z), f(w)) \leqslant {K}_{\Omega_1}(z, w).
$$
In particular, the Kobayashi distance is invariant under biholomorphisms, and decreases under inclusions: if $\Omega_{1} \subset \Omega_{2} \subset \subset \mathbb{C}^{n}$ are two bounded domains, then we have $K_{\Omega_{2}}(z, w) \leqslant K_{\Omega_{1}}(z, w)$ for all $z, w \in \Omega_{1}$.

 We have some estimates of the Kobayashi metric on convex domains:

 \begin{lemma}[ Theorem 5,\cite{Graham1975Boundary}]\label{est1}
 Suppose $\Omega\subset\mathbb{C}^n$ is a convex domain. If $z\in\Omega$ and $v\in\mathbb{C}^n$ is nonzero, then
 $$\frac{|v|}{2\delta_{\Omega}(z,v)}\leq k_{\Omega}(z,v)\leq \frac{|v|}{\delta_{\Omega}(z,v)}$$
 \end{lemma}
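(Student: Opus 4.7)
The plan is to prove the two inequalities separately, in each case reducing via convexity to a one-variable estimate on the complex affine line $H := z + \mathbb{C}\cdot v$. After translating so that $z = 0$ and applying a unitary rotation in $\mathbb{C}^n$, I would assume $v = |v|e_1$; a further rotation inside $H$ lets me take $\xi_0 := re_1$ to be the nearest point of $\partial\Omega \cap H$ to the origin, where $r := \delta_\Omega(z,v) > 0$. By definition of $r$, no boundary point of $\Omega \cap H$ lies inside the open Euclidean disk $\{\zeta e_1 : |\zeta| < r\}$, so this disk, viewed as a subset of $H \subset \mathbb{C}^n$, is contained in $\Omega$.

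For the upper bound I would simply plug the affine holomorphic map $\phi : \mathbb{D} \to \Omega$, $\phi(\zeta) := r\zeta e_1$, into the definition of the infinitesimal Kobayashi metric. Since $\phi(0) = 0$ and $\phi'(0)\cdot (|v|/r) = v$, the defining infimum immediately yields $k_\Omega(z,v) \leq |v|/r = |v|/\delta_\Omega(z,v)$.

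For the lower bound I would use convexity in the opposite direction and extract a supporting real hyperplane of $\Omega$ at $\xi_0$. The key geometric input is that because $\xi_0$ is the \emph{nearest} boundary point in the slice $\Omega \cap H$, the unique supporting line of the convex planar domain $\Omega \cap H$ at $\xi_0$ is the tangent line to the inscribed disk $\{|\zeta|<r\}$, namely $\{\zeta e_1 : \operatorname{Re}\zeta = r\}$. Any global supporting real hyperplane of $\Omega$ at $\xi_0$ must restrict to this line on $H$, so writing such a hyperplane as $\{w : \operatorname{Re}(L(w)) = r\}$ for a $\mathbb{C}$-linear functional $L : \mathbb{C}^n \to \mathbb{C}$, this restriction forces $L(e_1) \in \mathbb{R}$, and as $0 \in \Omega$ lies strictly on the correct side, $L(e_1) > 0$; after rescaling, $L(e_1) = 1$ and $\Omega \subset \{\operatorname{Re}(L(w)) < r\}$. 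Then for any candidate $f \in \operatorname{Hol}(\mathbb{D}, \Omega)$ with $f(0) = 0$ and $f'(0)\xi = v$, the composition $L \circ f$ maps $\mathbb{D}$ into the half-plane $\{\operatorname{Re}\eta < r\}$, whose Poincar\'e metric at $\eta = 0$ equals $|d\eta|/(2r)$. The Schwarz--Pick lemma gives $|(L\circ f)'(0)| \leq 2r$; but $(L\circ f)'(0) = L(v/\xi) = |v|/\xi$, so $|\xi| \geq |v|/(2r)$, and taking the infimum over competitors yields $k_\Omega(z,v) \geq |v|/(2\delta_\Omega(z,v))$.

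The hard part will be this geometric step in the lower bound: convexity readily supplies some supporting hyperplane at $\xi_0$, but the argument depends on its trace on $H$ being exactly the line $\{\operatorname{Re}\zeta = r\}$, equivalently that the associated $\mathbb{C}$-linear functional $L$ can be normalized so $L(e_1) = 1$. This relies crucially on $r$ being the boundary distance \emph{along the complex line $H$} and not merely the full Euclidean distance to $\partial\Omega$; with the latter, the same argument would produce only the weaker lower bound $k_\Omega(z,v) \gtrsim |v|/\delta_\Omega(z)$, which is insensitive to the complex-tangential geometry that the statement encodes.
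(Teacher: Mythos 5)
The paper does not prove this lemma; it cites Graham's Theorem 5 directly, so there is no in-text argument to compare against. Your proof is correct and is essentially the classical argument for this result.

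A few remarks for completeness. The upper bound, as you set it up, does not actually use convexity: the inscribed disk $\{\zeta e_1 : |\zeta|<r\}\subset\Omega\cap H$ exists for any domain once $r=\delta_\Omega(z,v)$, by a connectedness argument along radii. Convexity enters only in the lower bound, exactly as you flag. Your identification of the trace of the supporting hyperplane on $H$ is the crux and it is handled correctly: since the inscribed disk $D(0,r)$ and $\Omega\cap H$ share the boundary point $\xi_0=re_1$, any line supporting the convex set $\Omega\cap H$ at $\xi_0$ must also support $D(0,r)$ there and hence equals $\{\Re\zeta=r\}$; and the trace $P\cap H$ of a supporting hyperplane $P$ of $\Omega$ at $\xi_0$ is indeed a line (not all of $H$) because $z=0\in\Omega\cap H$ cannot lie on $P$. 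The normalization $L(e_1)=1$ and the half-plane Schwarz--Pick computation, including the factor $2r$ in the Poincar\'e density, are correct. One point you implicitly use and could make explicit is that the infimum defining $\delta_\Omega(z,v)$ is attained at some $\xi_0\in\partial\Omega\cap H$; this follows since $\partial\Omega\cap H$ is closed and the minimizing sequence is bounded (and for convex $\Omega$ with $z\in\Omega\cap H$ one has $\partial\Omega\cap H=\partial_H(\Omega\cap H)$). The degenerate case $\partial\Omega\cap H=\emptyset$, where $\delta_\Omega(z,v)=\infty$, should be dispatched separately but is trivial.
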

 We also have a global version of the estimate:
 \begin{lemma}[ Proposition 2.5,\cite{forstneric1993proper}]\label{est2}
 Suppose $\Omega\subset\mathbb{C}^n$ is a bounded domain with $C^{1,\alpha}(\alpha>0)$ smooth boundary, then there exists $C>0$ such that for any $x,y\in\Omega$,
 $$K_{\Omega}(x,y)\leq \frac{1}{2}\log\left(1+\frac{|x-y|}{\delta_{\Omega}(x)}\right)+\frac{1}{2}\log\left(1+\frac{|x-y|}{\delta_{\Omega}(y)}\right)+C$$
 \end{lemma}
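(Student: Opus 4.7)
My plan is to exhibit an absolutely continuous admissible curve from $x$ to $y$ whose Kobayashi length is bounded by the right-hand side, and then invoke the Royden--Venturini infimum characterisation of $K_\Omega$ recalled above. The curve will be built as the concatenation of three pieces: a descent $\sigma_x$ that pushes $x$ inward in an approximately normal direction to a point $\hat x$ with $\delta_\Omega(\hat x) \geq \delta_0$ for a fixed small $\delta_0 > 0$; a middle curve $\sigma_m$ connecting $\hat x$ to $\hat y$ inside the fattened interior $\{\delta_\Omega \geq \delta_0/2\}$; and a symmetric ascent $\sigma_y$ from $\hat y$ to $y$.

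On the compact interior set $\{\delta_\Omega\geq\delta_0/2\}$ the infinitesimal Kobayashi metric $k_\Omega$ is dominated by a constant multiple of the Euclidean metric, so I can choose $\sigma_m$ to be a short polygonal path with $L_k(\sigma_m)\lesssim \mathrm{diam}(\Omega)$, a contribution absorbed into the additive constant. For the descent $\sigma_x$, I would exploit the $C^{1,\alpha}$ regularity of $\partial\Omega$ to fit, near the closest boundary point $p_x$ to $x$, an affine complex disk $D_x\subset \Omega$ of uniform radius $r_0>0$ with axis along the inward normal direction $\nu_{p_x}$, centered slightly inside $\Omega$. Since $D_x$ is convex and contained in $\Omega$, the monotonicity $K_\Omega\leq K_{D_x}$ together with Lemma \ref{est1} applied on $D_x$ bounds the Kobayashi length of the normal path from $x$ to $\hat x$ by the Poincaré length of the corresponding chord in $\mathbb D$, yielding $L_k(\sigma_x)\leq \tfrac12\log\!\bigl(1+\delta_0/\delta_\Omega(x)\bigr)+O(1)$, and similarly for $\sigma_y$.

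To bring $|x-y|$ into the numerator as the statement demands rather than the fixed constant $\delta_0$, I would split into two cases. When $|x-y|\geq \delta_0$ the descent to depth $\delta_0$ contributes at most $\tfrac12\log(1+\delta_0/\delta_\Omega(x))\leq \tfrac12\log(1+|x-y|/\delta_\Omega(x))$, and analogously at $y$, while $\sigma_m$ remains $O(1)$. When $|x-y|<\delta_0$ the two points already lie in a common fitted affine disk $D\subset\Omega$ of radius $r_0$, and Lemma \ref{est1} applied inside the convex set $D$, parametrising the straight segment from $x$ to $y$, delivers directly a bound of the form $\tfrac12\log(1+|x-y|/\delta_\Omega(x))+\tfrac12\log(1+|x-y|/\delta_\Omega(y))+O(1)$, since $\delta_D(\cdot,v)$ is comparable to $\delta_\Omega$ at the two endpoints along the segment.

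The main obstacle is the geometric step of producing the inward-pointing affine complex disk of uniform radius $r_0$ while having only $C^{1,\alpha}$ regularity for $\alpha<1$: a uniform interior ball is not available, so one must instead use the Hölder estimate on the defining function of $\partial\Omega$ to check that, after centering the candidate disk at depth $R$ and taking radius $r\approx R/2$ with $R$ small but uniform, the inequality $R-r\gtrsim (R+r)^{1+\alpha}$ holds, keeping the closed disk strictly inside $\Omega$. A secondary bookkeeping step is the case analysis $|x-y|<\delta_0$ versus $|x-y|\geq \delta_0$ described above, needed so that both logarithms in the statement carry $|x-y|$ rather than the fixed constant in the numerator.
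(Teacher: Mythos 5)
The paper does not prove this lemma; it cites Proposition~2.5 of Forstneri\v c--Rosay \cite{forstneric1993proper}, so there is nothing internal to compare against and the attempt must be judged on its own.

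The overall architecture of your proposal (descent near $x$, a middle piece through a fixed compact interior region, descent near $y$, with a case split depending on the size of $|x-y|$) is the right shape, and the middle piece and the case bookkeeping are fine. The gap is exactly at the step you flag as ``the main obstacle,'' and the remedy you propose does not close it. You want a comparison domain $D_x\subset\Omega$ with uniform radius $r_0$ containing \emph{both} $x$ and the interior endpoint $\hat x$, so that $K_\Omega(x,\hat x)\leq K_{D_x}(x,\hat x)$. But the inequality $R-r\gtrsim (R+r)^{1+\alpha}$, which you use to keep the disk centered at depth $R$ with radius $r\approx R/2$ inside $\Omega$, forces every point of that disk to lie at depth $\gtrsim R-r\gtrsim (R+r)^{1+\alpha}$, a \emph{uniform positive} distance from $\partial\Omega$ once $R$ and $r$ are chosen uniform. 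Hence $D_x$ cannot contain $x$ when $\delta_\Omega(x)$ is smaller than that threshold, and the inequality $K_\Omega(x,\hat x)\leq K_{D_x}(x,\hat x)$ is vacuous precisely in the regime that matters. If instead you let the radius depend on the depth, the $C^{1,\alpha}$ constraint forces $r\lesssim \delta_\Omega(x)^{1/(1+\alpha)}$; then each disk only brings you to depth $\sim\delta_\Omega(x)^{1/(1+\alpha)}$, and iterating produces $\sim\log\log\bigl(1/\delta_\Omega(x)\bigr)$ steps, each contributing an $O(1)$ additive error, so the final bound degrades to $\tfrac12\log\bigl(1/\delta_\Omega(x)\bigr)+O(\log\log(1/\delta_\Omega(x)))$ rather than $+O(1)$. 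The same objection applies to the case $|x-y|<\delta_0$, where you again want a uniform-radius disk containing a point of arbitrarily small boundary distance.

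What actually carries the $C^{1,\alpha}$ hypothesis (for $0<\alpha<1$) is a boundary-regularity input: either the Kellogg--Warschawski theorem applied to the one-dimensional slice $\Omega\cap L$ (with $L$ the complex normal line), giving a Riemann map that is $C^{1,\beta}$ up to the boundary and hence an infinitesimal estimate of the form $k_\Omega(z;\nu)\leq \tfrac{1}{2\delta_\Omega(z)}\bigl(1+O(\delta_\Omega(z)^\beta)\bigr)$ with integrable error, or a direct pointwise estimate of the Kobayashi infinitesimal metric in the normal direction with error $O(\delta_\Omega(z)^{\alpha-1})$, whose integral along the inward normal converges because $\alpha>0$. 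Both routes replace the single fitted disk by a comparison that tracks the boundary all the way in. Your argument as written would be correct if $\partial\Omega$ were $C^{1,1}$ (so that a genuine uniform interior ball condition holds), but for $\alpha<1$ the uniform disk simply does not exist, and this is the central analytic difficulty that the $C^{1,\alpha}$ hypothesis (rather than $C^{1}$) is there to overcome.
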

 \begin{lemma}[Proposition 2, \cite{Nikolov2015The}]\label{est3}
  Suppose $\Omega\subset\mathbb{C}^n$ is a convex domain. If $x,y\in\Omega$, then
  $$K_{\Omega}(x,y)\geq \frac{1}{2}\log\left(1+\frac{|x-y|}{\delta_{\Omega}(x,x-y)\wedge\delta_{\Omega}(y,x-y)}\right)$$
 \end{lemma}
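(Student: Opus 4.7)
The plan is to reduce the $n$-dimensional inequality to the sharp one-dimensional estimate on the complex right half-plane $\mathbb{H}=\{\operatorname{Re} w>0\}$, by means of a holomorphic projection built from a carefully chosen supporting hyperplane. Set $v=y-x$. Since the right-hand side of the claim is symmetric in $x$ and $y$, it suffices to prove
$$K_\Omega(x,y)\geq \tfrac{1}{2}\log\!\bigl(1+|v|/\delta_\Omega(x,v)\bigr),$$
and then take the maximum of this with the analogous bound obtained by swapping $x\leftrightarrow y$. Write $d:=\delta_\Omega(x,v)$ and slice: the complex line $L_0=x+\mathbb{C} v$ meets $\Omega$ in a planar convex domain $S=L_0\cap\Omega$ with $\delta_S(x)=d$, and pick a boundary point $p\in\partial S\subset\partial\Omega$ with $|p-x|=d$.

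By convexity of $\Omega$ there is a real supporting hyperplane $T$ of $\Omega$ at $p$; after smoothing $\Omega$ from outside I may assume $T$ is unique. Then $T\cap L_0$ is the unique supporting line of $S$ at $p$, and because $p$ is the nearest boundary point of $S$ to the interior point $x$, this line is perpendicular (in $L_0$) to $p-x$. Consequently the orthogonal projection $N_L$ of the unit outer normal $N$ of $T$ onto $L_0$ is a positive real multiple of $p-x$; write $N_L=\alpha(p-x)/d$ with $\alpha=|N_L|\in(0,1]$. Defining $\ell(z)=\langle z,N\rangle_\mathbb{C}$ and $\pi(z)=\ell(p)-\ell(z)$ produces a holomorphic map $\pi\colon\Omega\to\mathbb{H}$, so Kobayashi contraction gives $K_\Omega(x,y)\geq K_{\mathbb{H}}(\pi(x),\pi(y))$. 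A direct Hermitian computation, using that $v$ and $p-x$ are parallel as complex vectors in the complex line $L_0-x=\mathbb{C} v$, yields
$$\operatorname{Re}\pi(x)=\langle p-x,N\rangle_\mathbb{R}=\alpha d,\qquad |\pi(x)-\pi(y)|=|\ell(v)|=\alpha|v|,$$
so that the factor $\alpha$ cancels in the ratio $|\pi(x)-\pi(y)|/\operatorname{Re}\pi(x)=|v|/d$.

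To finish, I apply the classical lower bound
$$K_{\mathbb{H}}(w_1,w_2)\geq \tfrac{1}{2}\log\!\Bigl(1+|w_1-w_2|/(\operatorname{Re}w_1\wedge\operatorname{Re}w_2)\Bigr),$$
which follows from the explicit Poincar\'e formula $K_{\mathbb{H}}(w_1,w_2)=\tfrac{1}{2}\log\tfrac{|w_1+\bar w_2|+|w_1-w_2|}{|w_1+\bar w_2|-|w_1-w_2|}$ together with the elementary inequality $|w_1+\bar w_2|\leq|w_1-w_2|+2\operatorname{Re}w_i$ (valid for $i=1,2$). Substituting $w_1=\pi(x)$, $w_2=\pi(y)$ gives $K_\Omega(x,y)\geq \tfrac{1}{2}\log(1+|v|/d)$, and the symmetric swap $x\leftrightarrow y$ completes the proof. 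The main obstacle is the geometric claim $N_L\parallel(p-x)$: for smooth strongly convex $\Omega$ it is immediate from the uniqueness of $T$ and the planar nearest-point principle, while for general convex $\Omega$ one approximates from outside by smooth strongly convex domains $\Omega_\varepsilon\supset\Omega$ and lets $\varepsilon\to 0$, using that $\delta_{\Omega_\varepsilon}(x,v)\downarrow\delta_\Omega(x,v)$ and $K_{\Omega_\varepsilon}\uparrow K_\Omega$.
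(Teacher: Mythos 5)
The paper does not prove this lemma; it is quoted verbatim from Nikolov's paper, so there is no ``paper's own proof'' to compare against. Your argument, however, is essentially the standard proof of such estimates (slice along the complex line, pick the nearest boundary point, project holomorphically via a supporting hyperplane onto the half-plane, and invoke the explicit half-plane formula), and as written it is correct.

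That said, you have introduced an unnecessary complication. The whole business of arguing that $N_L$ is a \emph{positive real multiple} of $p-x$ --- which forces you to assume uniqueness of the supporting hyperplane, and hence to smooth $\Omega$ and pass to a limit --- is not needed. For an arbitrary supporting hyperplane $T$ at $p$ with unit outer normal $N$, put $\pi(z)=\langle p-z,N\rangle_{\mathbb C}$. Then $\operatorname{Re}\pi(z)=\langle p-z,N\rangle_{\mathbb R}>0$ on $\Omega$, so $\pi:\Omega\to\mathbb H$ is holomorphic with no further hypotheses. Since $v$ and $p-x$ lie in the same complex line, $v=\lambda(p-x)$ with $|\lambda|=|v|/d$, so
$$|\pi(x)-\pi(y)|=|\langle v,N\rangle_{\mathbb C}|=\frac{|v|}{d}\,|\langle p-x,N\rangle_{\mathbb C}|\;\geq\;\frac{|v|}{d}\,\operatorname{Re}\langle p-x,N\rangle_{\mathbb C}=\frac{|v|}{d}\,\operatorname{Re}\pi(x),$$
and the half-plane estimate already gives $K_{\mathbb H}(\pi(x),\pi(y))\geq\frac12\log\bigl(1+|v|/d\bigr)$. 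This removes the approximation step entirely. Relatedly, the claim $K_{\Omega_\varepsilon}\uparrow K_\Omega$ in your closing sentence asserts convergence you never use and would need justification; all you actually use is the one-sided monotonicity $K_\Omega\geq K_{\Omega_\varepsilon}$ from the inclusion $\Omega\subset\Omega_\varepsilon$, which is automatic. With the simplification above neither claim is needed at all.

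One small wording issue: you refer to $\langle\cdot,N\rangle_{\mathbb C}$ as ``Hermitian'' but then compute $\operatorname{Re}\pi(x)=\langle p-x,N\rangle_{\mathbb R}$ and $|\pi(x)-\pi(y)|=|\langle v,N\rangle_{\mathbb C}|$ in the same breath; this is fine because the real part of the Hermitian pairing is the real Euclidean pairing and because orthogonal projection onto the complex subspace $\mathbb C v$ agrees in the real and Hermitian senses, but it is worth saying so explicitly since it is precisely what makes $|\langle p-x,N\rangle_{\mathbb C}|\geq\langle p-x,N\rangle_{\mathbb R}$ the right inequality to use.
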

 \begin{lemma}[Proposition 2.4,\cite{MercerComplex}]\label{est4}
 Suppose $\Omega\subset\mathbb{C}^n$ is a convex domain. If $x,y\in\Omega$, then
 $$K_{\Omega}(x,y)\geq\frac{1}{2}\left|\log\frac{\delta_{\Omega}(x)}{\delta_{\Omega}(y)}\right|$$
 \end{lemma}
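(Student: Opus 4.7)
My plan is to reduce the inequality to a one-dimensional Poincar\'e estimate on the right half-plane $H_+=\{w\in\mathbb{C}:\operatorname{Re} w>0\}$ by means of a holomorphic affine projection. By the symmetry of the claim in $x,y$, I may assume $\delta_\Omega(x)\le \delta_\Omega(y)$, so it suffices to prove $K_\Omega(x,y)\ge \tfrac12\log(\delta_\Omega(y)/\delta_\Omega(x))$. I would fix a nearest boundary point $p\in\partial\Omega$ with $|x-p|=\delta_\Omega(x)$ and set $\nu=(p-x)/|p-x|\in\mathbb{R}^{2n}$. A standard convex-analytic argument (the tangent hyperplane to the inscribed ball $B(x,|x-p|)\subset \Omega$ at $p$ must also support $\Omega$, since otherwise $p$ would lie strictly inside the convex hull of that ball and some interior point of $\Omega$ crossing the hyperplane, hence strictly inside $\Omega$) yields $\Omega\subset\{z:\langle z-p,\nu\rangle<0\}$.

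I would then introduce the complex-linear holomorphic functional
\[
 f(z)=\sum_{j=1}^n (p_j-z_j)\,\nu_j,\qquad z\in \Omega,
\]
so that $\operatorname{Re} f(z)=\langle p-z,\nu\rangle_{\mathbb{R}^{2n}}>0$ on $\Omega$, i.e.\ $f\colon\Omega\to H_+$ is holomorphic. Two properties will drive the proof: (i) $\operatorname{Re} f(x)=|p-x|=\delta_\Omega(x)$, because $p-x$ is a positive real multiple of $\nu$; (ii) for every $z\in\Omega$, $\operatorname{Re} f(z)\ge \delta_\Omega(z)$, because the complex line $z+\mathbb{C}\nu$ must leave $\Omega$ strictly before reaching the supporting hyperplane, so that $\delta_\Omega(z)\le \delta_\Omega(z,\nu)\le \operatorname{Re} f(z)$.

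It then remains to establish the sharp half-plane inequality
\[
 K_{H_+}(w_0,w_1)\ge \tfrac12\bigl|\log(\operatorname{Re} w_0/\operatorname{Re} w_1)\bigr|,
\]
which follows by transferring the Poincar\'e density $|dw|/(2\operatorname{Re} w)$ from the disk via the Cayley map and then discarding the imaginary component of the velocity: for any absolutely continuous $\gamma(t)=u(t)+iv(t)$ in $H_+$,
\[
 \int \frac{|\gamma'(t)|}{2u(t)}\,dt\;\ge\;\Bigl|\int \frac{u'(t)}{2u(t)}\,dt\Bigr|\;=\;\tfrac12\bigl|\log(u_1/u_0)\bigr|.
\]
Combining this with the Kobayashi contraction under $f$ together with (i)--(ii) and the assumption $\delta_\Omega(y)\ge\delta_\Omega(x)$ yields
\[
 K_\Omega(x,y)\;\ge\;K_{H_+}(f(x),f(y))\;\ge\;\tfrac12\log\frac{\operatorname{Re} f(y)}{\operatorname{Re} f(x)}\;\ge\;\tfrac12\log\frac{\delta_\Omega(y)}{\delta_\Omega(x)}.
\]

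I do not anticipate a serious obstacle: the essential use of convexity is confined to producing the supporting hyperplane at the nearest boundary point, and that is precisely what legitimizes the replacement $\delta_\Omega(z)\le \operatorname{Re} f(z)$ in (ii). Nothing else is required beyond the functoriality of the Kobayashi distance and the elementary one-variable computation on $H_+$. If I had tried instead to use the (non-holomorphic) nearest-point retraction to the boundary, the distance-decreasing property would fail; the holomorphic affine projection is the device that circumvents this.
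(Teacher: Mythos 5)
The paper states this lemma with a citation to Mercer (Proposition~2.4 of \cite{MercerComplex}) and gives no proof of its own, so there is no internal argument to compare against. Your proof is correct and is essentially the standard one for such Kobayashi-distance estimates on convex domains: pick a nearest boundary point $p$ for the ``inner'' point $x$, use the supporting hyperplane at $p$ (which, as you argue, is forced to support all of $\Omega$, not just the inscribed ball, by convexity), project to the right half-plane via the affine holomorphic functional whose real part is the signed distance to that hyperplane, and transfer the sharp one-dimensional half-plane bound $K_{H_+}(w_0,w_1)\geq\tfrac12\lvert\log(\operatorname{Re} w_0/\operatorname{Re} w_1)\rvert$ back by functoriality. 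The monotonicity you need at the end — $\operatorname{Re} f(x)=\delta_\Omega(x)$ while $\operatorname{Re} f(y)\geq\delta_\Omega(y)\geq\delta_\Omega(x)$, so the quotient $\operatorname{Re} f(y)/\operatorname{Re} f(x)$ dominates $\delta_\Omega(y)/\delta_\Omega(x)\geq 1$ — is handled properly.

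One small point to tidy up when writing this out: with $\nu$ regarded as a unit vector in $\mathbb{C}^n$, the holomorphic functional should be $f(z)=\sum_j (p_j-z_j)\overline{\nu_j}$ (equivalently, the Hermitian pairing $\langle p-z,\nu\rangle_{\mathbb{C}^n}$, linear in the first slot). As written, $\sum_j(p_j-z_j)\nu_j$ is holomorphic but its real part is not the real inner product $\langle p-z,\nu\rangle_{\mathbb{R}^{2n}}$ that you need; the conjugate fixes this without affecting anything else. Also, the detour through $\delta_\Omega(z,\nu)$ in (ii) is unnecessary — $\delta_\Omega(z)\leq\operatorname{Re} f(z)$ follows directly because $\operatorname{Re} f(z)$ is the Euclidean distance from $z$ to the supporting hyperplane and the ray $z+t\nu$ must exit $\Omega$ at or before that hyperplane — but it is not wrong.
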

 \begin{rmk}
 The above estimates suffice for our purposes, but there are some improvements, for instance \cite{ZimmerCharacterizing,Nikolov2015Estimates}.
 \end{rmk}

\section{Convex domains with smooth boundary of finite type }
 Let $\Omega$ be smoothly bounded and convex in some neighborhood of $p\in\partial \Omega$. We suppose $p$ is of finite type (in the sense of D'Angelo), which means that the maximum order of contact of one-dimensional complex analytic varieties (or equivalently one-dimensional complex lines in this case\cite{1992Convex}) with $\partial \Omega$ at $p$, is bounded.
 \subsection{Construction of the polydisks}
Now we will introduce the polydisks constructed by Chen and McNeal (the original construction needs a little correction, see \cite{Zimmer2016Gromov,Nikolov2013On,2004Extremal}).
For a neighborhood $U$ of $p$, let $r$ be the defining function such that $\Omega\cap U=\{z\in U:r(z)<0\}$. By a rotation of the canonical coordinates we can arrange that the normal direction to $\partial \Omega$ at $p$ is given by the $\Re z_{1}$-axis. Then by using the implicit function theorem, we obtain a local defining function of the form $r\left(z_{1}, \ldots, z_{n}\right)=\Re z_{1}-F\left(\Im z_{1}, \ldots, \Re z_{n}, \Im z_{n}\right)$,
where $F$ is a convex function. For $q \in U$ and $\epsilon>0$, we will consider the level sets
$$\partial \Omega_{q, \epsilon}=\{z \in U : r(z)=\epsilon+r(q)\}$$
the defining function has been chosen so that the level sets are convex.

After perhaps shrinking $U$, for every $q \in \Omega\cap U$ and a sufficiently small $\epsilon>0$, we can assign coordinates $\left(z_{1}, \ldots, z_{n}\right)$, $z_{i}=x_{i}+i x_{n+i}$, centered at $q,$ obtained by translating and rotating the canonical coordinates, and numbers $\tau^{i}(q, \epsilon)$ which measure the distance from $q$ to $\partial \Omega_{q, \epsilon}$ along the complex line determined by the $z_{i}$-axis:

First, choose $z_{1}$ so that $dist\left(q, \: \partial \Omega_{q, \epsilon}\right)$ is achieved along the positive $x_{1}$-axis. Let $q_{1,\epsilon}$ be the point in $\partial \Omega_{q, \epsilon}$ such that $\tau_{1}(q,\epsilon)=|q-q_{1,\epsilon}|= dist\left(q, \partial \Omega_{q, \:\epsilon}\right)$, and let $e_1$ be the unit vector in the direction of $x_1$-axis. Next choose a
unit vector $e_2$ in the orthogonal complement of the space $e_1$ (the complex
linear span of $e_1$) such that the minimum distance from $q$ to $\partial \Omega_{q, \epsilon}$ along
directions orthogonal to $e_1$ is achieved along the line given by $e_2$ in a
point $q_{2,\epsilon}$. Choose $z_2$ such that $x_2$-axis lies in the direction of $e_2$ and $\tau_{2}(q,\epsilon)=|q-q_{2,\epsilon}|$. Now continue by choosing a unit vector $e_3$ in the orthogonal complement of $e_1,e_2$, until the basis is complete.
 Also note that the remaining coordinates $z_{i}, \: i=3, \ldots, n$, have the property that the distance from $q$ to $\partial \Omega_{q, \epsilon}$ with respect to the complex direction $z_i$ is achieved on the positive $x_{i}$-axis.

Therefore
$$
P(q,\:\epsilon)=\{|z_i|\leq\tau_{i}(q,\epsilon), \:i=1,\cdots, n\}
$$
is the corresponding polydisk constructed in terms of the minimal basis in $D\cap U$.
\begin{proposition}[Proposition 2.2,\cite{1994Estimates}]\label{del1}
Let $v$ be a unit vector and $v=\sum\limits_{i=1}^{n}a_i e_i$, $a_i\in\mathbb{C}$ and $e_i$ are orthogonal unit vectors determined by the coordinate directions constructed above. For small $\epsilon>0$, we have
\begin{align}
\left(\sum_{i=1}^{n}|a_i|\tau_i(q,\epsilon)^{-1}\right)^{-1}\lesssim\delta_{\Omega_{q,\epsilon}}(q,v)\lesssim\left(\sum_{i=1}^{n}|a_i|\tau_i(q,\epsilon)^{-1}\right)^{-1}
\end{align}
where the constant is independent of $q,\epsilon$ and the vector $v$.
\end{proposition}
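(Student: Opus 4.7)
The plan is to deduce the proposition from the fundamental comparison between the sublevel set $\Omega_{q,\epsilon}$ and the polydisk $P(q,\epsilon)$, followed by an elementary algebraic inequality. Concretely, I would first invoke the key property of the Chen--McNeal construction (essentially the content of the references cited at the start of Section 3.1): there exist absolute constants $0 < c_1 < c_2$, independent of $q \in \Omega \cap U$ and of small $\epsilon > 0$, such that, in the adapted coordinates centered at $q$,
$$
q + c_1\, P(q,\epsilon) \;\subset\; \Omega_{q,\epsilon} \cap V \;\subset\; q + c_2\, P(q,\epsilon),
$$
for a suitably chosen fixed neighborhood $V$ of $p$. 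The right-hand inclusion is the heart of McNeal's minimal-basis construction: the greedy choice of $e_i$ prevents any direction from "escaping" $\Omega_{q,\epsilon}$ faster than $\tau_i(q,\epsilon)$. The left-hand inclusion follows from convexity together with the fact that $\pm \tau_i(q,\epsilon)\, e_i \in \overline{\Omega_{q,\epsilon} - q}$, using the balanced convex hull of these $2n$ points and a dimensional constant.

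Next I would pass from this multivariate inclusion to the one-dimensional quantity $\delta_{\Omega_{q,\epsilon}}(q,v)$. Writing $v = \sum_{i=1}^{n} a_i e_i$ in the minimal basis, the point $q + \lambda v$ has coordinates $(\lambda a_1,\dots,\lambda a_n)$, hence $q + \lambda v \in q + c\, P(q,\epsilon)$ if and only if $|\lambda| \leq c\, \min_i \tau_i(q,\epsilon)/|a_i|$. Intersecting the above inclusions with the complex line $H = q + \mathbb{C}\cdot v$ then yields
$$
c_1 \min_{1\leq i\leq n}\frac{\tau_i(q,\epsilon)}{|a_i|} \;\leq\; \delta_{\Omega_{q,\epsilon}}(q,v) \;\leq\; c_2 \min_{1\leq i\leq n}\frac{\tau_i(q,\epsilon)}{|a_i|}.
$$

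Setting $s_i := |a_i|/\tau_i(q,\epsilon)$, the trivial comparison $\max_i s_i \leq \sum_i s_i \leq n\max_i s_i$ gives
$$
\left(\sum_{i=1}^{n} \frac{|a_i|}{\tau_i(q,\epsilon)}\right)^{-1} \;\asymp\; \min_{1\leq i\leq n}\frac{\tau_i(q,\epsilon)}{|a_i|},
$$
with constants depending only on $n$. Combining this with the previous two-sided estimate yields the proposition.

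The main obstacle is the upper polydisk inclusion $\Omega_{q,\epsilon} \cap V \subset q + c_2\, P(q,\epsilon)$: the lower inclusion is a quick consequence of convexity, but the upper one is what makes the $e_i$ genuinely \emph{minimal} rather than arbitrary orthonormal directions, and it is the nontrivial part of McNeal's construction. For this paper I would cite it rather than reprove it; once it is in hand, the remainder of the argument is the routine reduction to a single complex line together with the elementary inequality above.
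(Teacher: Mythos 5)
The paper does not prove this proposition; it quotes it verbatim as Proposition~2.2 of McNeal \cite{1994Estimates}, so there is no in-house argument for you to track. Judged on its own, your sketch has a genuine gap, located precisely at the inclusion you single out as the crux. The claimed upper containment $\Omega_{q,\epsilon}\cap V\subset q+c_2\,P(q,\epsilon)$ cannot hold for any neighborhood $V$ of $p$ fixed independently of $\epsilon$ and $q$: the set $\Omega_{q,\epsilon}=\{z\in U:r(z)<r(q)+\epsilon\}$ is a sublevel set and extends inward from $q$ (roughly in the $-e_1$ direction) by a distance of order $\operatorname{dist}(q,\partial U)$, which does not shrink with $\epsilon$, whereas $q+c_2\,P(q,\epsilon)$ has diameter $\lesssim c_2\,\tau_n(q,\epsilon)\to 0$ as $\epsilon\to 0$. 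So the inclusion fails in exactly the regime the proposition concerns, and the upper estimate $\delta_{\Omega_{q,\epsilon}}(q,v)\lesssim\big(\sum_i|a_i|/\tau_i\big)^{-1}$ does not follow from your sandwich. The lower inclusion and the restriction to the complex line $q+\mathbb{C}v$ are fine in outline, but the justification you offer for the lower inclusion is also not right: the balanced convex hull of the $2n$ points $\pm\tau_i e_i$ does not contain a fixed dilate of the polydisk. What one actually uses is that, by the minimality of the basis, the full complex disk $\{e^{i\theta}t\,e_i:0\le t<\tau_i,\ \theta\in[0,2\pi)\}$ lies in $\Omega_{q,\epsilon}-q$ for each $i$ (the nearest boundary point on the complex line $\mathbb{C}e_i$ is $\tau_i e_i$); convexity then gives $\{\sum_i|z_i|/\tau_i<1\}\subset\Omega_{q,\epsilon}-q$, and this set contains $\tfrac{1}{n}P(q,\epsilon)$.

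The substance of the upper bound is not an elementary consequence of a polydisk sandwich. It rests on the supporting hyperplanes of the convex set $\Omega_{q,\epsilon}$ at the extremal points $q+\tau_i e_i$, together with the fact that minimality of the $e_i$ forces the $i$-th supporting hyperplane to have normal with a quantitatively large $e_i$-component; restricting these half-space constraints to $q+\mathbb{C}v$ is what produces $\delta_{\Omega_{q,\epsilon}}(q,v)\lesssim\min_i\tau_i/|a_i|$. If one insists on phrasing it as a containment, the correct left-hand side is $\Omega_{q,\epsilon}$ intersected with a slab of width $\asymp\epsilon$ in the normal direction rather than a fixed neighborhood $V$, and establishing that containment already carries essentially the full weight of McNeal's proposition. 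As written, your argument hands off the hardest step to a false statement.
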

\begin{proposition}[Proposition 2.5,\cite{1994Estimates}]\label{doubling}
Given $c>0$, for small $\epsilon>0$ $$\tau_{1}(q,\epsilon)\asymp\epsilon.$$
For $2\leq i\leq n$, if $c>1$
$$c^{1/L}\tau_{i}(q,\epsilon)\lesssim\tau_i(q,c\epsilon)\lesssim c^{1/2}\tau_{i}(q,\epsilon)$$
if $0<c\leq 1$
$$c^{1/2}\tau_{i}(q,\epsilon)\lesssim\tau_i(q,c\epsilon)\lesssim c^{1/L}\tau_{i}(q,\epsilon)$$
where the constants independent of $q,c$ and $\epsilon$.
\end{proposition}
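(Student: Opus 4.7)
My plan is to reduce the estimates to a one-dimensional analysis along each minimal basis direction. The claim $\tau_1(q,\epsilon)\asymp\epsilon$ follows directly from the adapted coordinates: the defining function $r(z)=\Re z_1-F(\Im z_1,z_2,\ldots,z_n)$ satisfies $\partial r/\partial x_1\equiv 1$, so the distance from $q$ to $\partial\Omega_{q,\epsilon}$ along the positive $x_1$-axis is comparable to $\epsilon$ uniformly in $q$ near $p$.

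For $i\geq 2$, for each unit vector $v$ in the complex orthogonal complement of $\{e_1,\ldots,e_{i-1}\}$ I would study the one-variable convex function $f_{q,v}(t):=r(q+tv)-r(q)$, $t\geq 0$. Since $v$ is real-orthogonal to the normal direction $e_1$ by the minimal-basis construction, $f_{q,v}(0)=f_{q,v}'(0)=0$. Smoothness of $r$ then gives a quadratic upper envelope $f_{q,v}(t)\lesssim t^2$, while the D'Angelo finite type hypothesis at $p$, together with the fact that for convex domains finite type along complex varieties coincides with finite type along complex lines, yields a uniform lower envelope $f_{q,v}(t)\gtrsim t^{L}$ on a fixed small interval. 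These two envelopes combined with the convexity of $f_{q,v}$ produce the scaling estimates
$$c\,f_{q,v}(t)\lesssim f_{q,v}(c^{1/2}\,t)\qquad\text{and}\qquad f_{q,v}(c^{1/L}\,t)\lesssim c\,f_{q,v}(t)$$
for $c\geq 1$ and $t$ small, uniformly in $(q,v)$. Inverting these estimates for $\delta_v(q,\epsilon):=\delta_{\Omega_{q,\epsilon}}(q,v)$ gives $c^{1/L}\delta_v(q,\epsilon)\lesssim \delta_v(q,c\epsilon)\lesssim c^{1/2}\delta_v(q,\epsilon)$.

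The doubling for $\tau_i$ follows by comparing minimizers. Since $\tau_i(q,\epsilon)=\min_v \delta_v(q,\epsilon)$, letting $v^{*}_\epsilon$ and $v^{*}_{c\epsilon}$ realize the minima at levels $\epsilon$ and $c\epsilon$, for $c\geq 1$,
$$\tau_i(q,c\epsilon)\leq \delta_{v^{*}_\epsilon}(q,c\epsilon)\lesssim c^{1/2}\,\delta_{v^{*}_\epsilon}(q,\epsilon)= c^{1/2}\,\tau_i(q,\epsilon),$$
$$\tau_i(q,c\epsilon)=\delta_{v^{*}_{c\epsilon}}(q,c\epsilon)\gtrsim c^{1/L}\,\delta_{v^{*}_{c\epsilon}}(q,\epsilon)\geq c^{1/L}\,\tau_i(q,\epsilon),$$
where in the second line I used $\delta_{v^{*}_{c\epsilon}}(q,\epsilon)\geq \min_v \delta_v(q,\epsilon) = \tau_i(q,\epsilon)$. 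The case $0<c\leq 1$ follows by interchanging the roles of $\epsilon$ and $c\epsilon$.

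The main obstacle I expect is the passage from the two envelopes $f_{q,v}(t)\lesssim t^2$ and $f_{q,v}(t)\gtrsim t^L$ to the multiplicative scaling inequalities above, uniformly as $q$ ranges near $p$ and $v$ ranges over the compact set of directions orthogonal to $e_1$. The convexity hypothesis is essential in two ways: it reduces the finite type condition along complex varieties to the finite type condition along complex lines, so that the one-variable functions $f_{q,v}$ capture the full boundary geometry, and it is what allows the pointwise envelopes to be promoted to the multiplicative scaling estimates that drive the doubling. Verifying uniform constants across the $(q,v,\epsilon)$ ranges requires a careful local analysis of the Taylor expansion of the convex defining function throughout a neighborhood of $p$.
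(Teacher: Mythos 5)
The paper does not prove this statement; it is quoted verbatim from McNeal (Proposition 2.5 in \cite{1994Estimates}), so there is no in-paper proof to compare against. Evaluating your sketch on its own terms: your treatment of $\tau_1(q,\epsilon)\asymp\epsilon$ is fine, and your idea of comparing the minimizing directions at the two levels $\epsilon$ and $c\epsilon$ is the right shape, but there are two genuine gaps.

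First, the passage you flag as ``the main obstacle'' is a real obstacle, not just a technicality. Convexity of $f_{q,v}$ together with the envelopes $t^L\lesssim f_{q,v}(t)\lesssim t^2$ does \emph{not} yield $c\,f_{q,v}(t)\lesssim f_{q,v}(c^{1/2}t)$ with a uniform constant. Convexity with $f(0)=f'(0)=0$ gives only $f(\lambda t)\geq\lambda f(t)$ for $\lambda\geq1$, i.e.\ $f(c^{1/2}t)\geq c^{1/2}f(t)$, which is strictly weaker than what you need; and comparing the two envelopes directly gives the desired inequality only when $c^{1/2}t$ is bounded away from $0$. The mechanism that actually works is the finite Taylor expansion along the minimal basis direction, $r\circ z(0,\dots,t_i,\dots,0)-r(x)=\sum_{k=2}^{L}a_{ik}t^k+O(t^{L+1})$, combined with McNeal's Proposition 2.2 which yields $\tau_i(q,\epsilon)\asymp\min_{2\leq k\leq L}(\epsilon/|a_{ik}|)^{1/k}$. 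With that formula the doubling is algebra: if $k^*$ achieves the min at level $\epsilon$, then $\tau_i(q,c\epsilon)\leq(c\epsilon/|a_{ik^*}|)^{1/k^*}=c^{1/k^*}\tau_i(q,\epsilon)\leq c^{1/2}\tau_i(q,\epsilon)$, and the lower bound is symmetric using $1/L\leq 1/k\leq 1/2$. Your envelope argument should be replaced by this.

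Second, your step $\delta_{v^*_{c\epsilon}}(q,\epsilon)\geq\min_v\delta_v(q,\epsilon)=\tau_i(q,\epsilon)$ silently assumes that $v^*_{c\epsilon}$ lies in the domain over which $\tau_i(q,\epsilon)$ is minimized, namely the orthogonal complement of $e_1,\dots,e_{i-1}$ constructed at level $\epsilon$. But the minimal basis is constructed level by level and depends on $\epsilon$: already $e_1$ (the direction in which $\mathrm{dist}(q,\partial\Omega_{q,\epsilon})$ is attained) changes with $\epsilon$ for a generic convex level set, and a fortiori so do $e_2,\dots,e_{i-1}$. So $v^*_{c\epsilon}$ need not be admissible at level $\epsilon$, and the displayed inequality does not follow. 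McNeal handles this by first proving that the polydisks (equivalently, the minimal bases) at comparable levels are themselves comparable, and only then deduces the doubling; you need that engulfing lemma, or some substitute, before the minimizer-comparison step is legitimate.
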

\subsection{The local pseudodistance}
Suppose $x,y\in\Omega\cap U$ and each $q\in\partial\Omega\cap U$ is of finite type at most $L$. Define
$$M(x,y)=\inf\{\epsilon>0:y\in P(x,\epsilon)\},$$
where $P(x,\epsilon)$ is constructed from the coordinates about $x$.
Let $\delta=M(x,y)$ and construct the coordinates about $x$ to the level set $\{r=r(x)+\delta\}$.
For each $2\leq i\leq n$, apply Taylor's Theorem and we have
$$r\circ z(0,\cdots,t_i,\cdots,0)=r(x)+\sum\limits_{k=2}^{L}a_{ik}(x)t_{i}^{k}+\mathcal{O}(|t_i|^{L+1})$$
From P.134 in \cite{1994Estimates},
\begin{equation}\label{pseu}
M(x,y)\asymp|x_1-y_1|+\sum\limits_{i=2}^{n}\sum\limits_{k=2}^{L}|a_{ik}(x)||x_i-y_i|^k.\end{equation}
We provide a proof for completeness.
On one hand, by the definition of $M(x,y)$,  applying Taylor's Theorem in a single variable $2\leq i\leq n$ and 
we have
\begin{align*}
r(x)+\delta=r(x)+\sum\limits_{k=2}^{L}a_{ik}(x)\tau_{i}^{k}(x,\delta)+\mathcal{O}(|\tau_i(x,\delta)|^{L+1}).
\end{align*}
Then, there exists $2\leq k\leq L$ such that $a_{ik}(x)\neq 0$ since $\partial\Omega \cap U$ is of finite type $L$. By a result in \cite{1988Convex}(or see p.113-114 in \cite{1994Estimates}),
\begin{align*}
\delta&=\sum\limits_{k=2}^{L}a_{ik}(x)\tau_{i}^{k}(x,\delta)+\mathcal{O}(|\tau_i(x,\delta)|^{L+1})\\
&\gtrsim\sum\limits_{k=2}^{L}|a_{ik}(x)|\tau_{i}^{k}(x,\delta)
\end{align*}
so for each $2\leq i\leq n$,
$$\delta\gtrsim\sum\limits_{k=2}^{L}|a_{ik}(x)||x_i-y_i|^k.$$
For $i=1$, it is obvious that $\delta\gtrsim|x_1-y_1|.$ Thus,
$$M(x,y)=\delta\gtrsim |x_1-y_1|+\sum\limits_{i=2}^{n}\sum\limits_{k=2}^{L}|a_{ik}(x)||x_i-y_i|^k$$
On the other hand, by the definition of $M(x,y)$ and the continuity of $r$, we have $y\in \partial P(x,\delta)$. Thus, there exists $2\leq i\leq n$ such that
$$|x_i-y_i|=\tau_{i}(x,\delta)$$
and
$$\delta\lesssim\sum\limits_{k=2}^{L}a_{ik}(x)|x_i-y_i|^k$$
or $$|x_1-y_1|=\tau_1(x,\delta)\asymp\delta.$$
Thus $$\delta\lesssim|x_1-y_1|+\sum\limits_{i=2}^{n}\sum\limits_{k=2}^{L}|a_{ik}(x)||x_i-y_i|^k$$
which completes the proof.
\begin{proposition}[Proposition 5.1,\cite{1994Estimates}]\label{pseudodis}
$M(\cdot,\cdot)$ defines a local pseudodistance; i.e. there exists $C>0$ such that for any $x,y,z\in\Omega\cap U$, then
\begin{enumerate}
\item
$M(x,y)=0$ if and only if $x=y$;
\item
$M(x,y)\leq C M(y,x)$;
\item
$M(x,y)\leq C\left(M(x,z)+M(z,y)\right).$
\end{enumerate}
\end{proposition}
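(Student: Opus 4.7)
The plan is to derive the three properties from the characterization (\ref{pseu}) of $M(x,y)$ together with the geometric properties of McNeal's polydisks supplied by Propositions \ref{del1} and \ref{doubling}.

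For property (1), the \emph{if} direction is immediate since $y=x$ makes $y\in P(x,\epsilon)$ for every $\epsilon>0$. For the converse, suppose $M(x,y)=0$. Then by (\ref{pseu}) we have $|x_1-y_1|=0$ and $\sum_{k=2}^{L}|a_{ik}(x)||x_i-y_i|^k=0$ for each $2\le i\le n$. Because $\partial\Omega\cap U$ is of finite type at most $L$ along every complex direction, for each such $i$ there exists $k$ with $a_{ik}(x)\ne 0$, forcing $x_i=y_i$. Hence $x=y$.

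For properties (2) and (3) the key ingredient is an \emph{engulfing lemma}: there exists $C>0$ such that whenever $z\in P(x,\epsilon)$, one has $P(z,\epsilon)\subset P(x,C\epsilon)$ and $P(x,\epsilon)\subset P(z,C\epsilon)$. To prove this I would Taylor-expand the smooth convex defining function $r$ around $z$ and compare with the expansion around $x$, using that $|z-x|$ is controlled in each complex direction $e_i$ by $\tau_i(x,\epsilon)$ and that the level sets $\partial\Omega_{x,\epsilon}$ are convex. Combining this with the doubling estimates of Proposition \ref{doubling} yields $\tau_i(z,\epsilon)\asymp\tau_i(x,\epsilon)$ together with a bound on the angle between the minimal bases at $x$ and at $z$, from which the desired inclusions follow.

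Granting the engulfing lemma, property (2) is immediate: $y\in P(x,M(x,y))$ gives $x\in P(y,CM(x,y))$, hence $M(y,x)\le CM(x,y)$. For property (3), set $\epsilon_1=M(x,z)$, $\epsilon_2=M(z,y)$ and $\epsilon^{*}=\epsilon_1\vee\epsilon_2\le \epsilon_1+\epsilon_2$. Since $z\in P(x,\epsilon_1)\subset P(x,\epsilon^{*})$, the engulfing lemma gives $P(z,\epsilon^{*})\subset P(x,C\epsilon^{*})$; because $y\in P(z,\epsilon_2)\subset P(z,\epsilon^{*})$, we conclude $y\in P(x,C\epsilon^{*})$, whence $M(x,y)\le C\epsilon^{*}\le C(\epsilon_1+\epsilon_2)$. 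The main obstacle is the engulfing lemma itself: while intuitively clear, its rigorous proof requires checking that the iterated minimization defining the minimal basis is stable under perturbations of the base point of size comparable to $\tau_i(x,\epsilon)$. The convexity of the level sets together with the polynomial-type doubling of Proposition \ref{doubling} is precisely what makes this stability work, and this is the content of the companion estimates in \cite{1994Estimates}.
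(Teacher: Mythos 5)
The paper does not prove this proposition: it is imported verbatim as Proposition~5.1 of McNeal~\cite{1994Estimates}, so there is no in-paper argument to compare yours against. What you have written is a reconstruction of McNeal's route, and the logical skeleton is right: reducing (2) and (3) to an engulfing property of the polydisks $P(\cdot,\epsilon)$ is exactly the standard (and McNeal's) strategy, and granted that lemma your derivations of (2) and (3) are correct.

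The gap is that the engulfing lemma \emph{is} the proposition; everything else is bookkeeping. You state it and then wave at a proof: ``Taylor-expand $r$ around $z$, compare with the expansion around $x$, combine with the doubling estimates of Proposition~\ref{doubling} to get $\tau_i(z,\epsilon)\asymp\tau_i(x,\epsilon)$ and a bound on the angle between the minimal bases.'' But Proposition~\ref{doubling} only compares $\tau_i(q,\epsilon)$ with $\tau_i(q,c\epsilon)$ \emph{at a fixed center}; it says nothing about moving the center from $x$ to $z$. Controlling how the entire iteratively-defined minimal basis $(e_1,\dots,e_n)$ rotates as the base point moves inside $P(x,\epsilon)$ is precisely the delicate content of McNeal's argument, and it is not a soft consequence of doubling plus convexity of level sets. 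Indeed, the paper itself flags (in Section~3.1) that McNeal's original construction required correction on exactly this kind of point; so asserting the basis stability without proof is assuming what must be shown. As written, the proposal reduces the proposition to an unproved claim that carries the same difficulty.

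A smaller issue is in your proof of (1). The expansion~(\ref{pseu}) uses coordinates adapted to $\delta=M(x,y)$, so invoking it when $M(x,y)=0$ is circular: the adapted coordinates and the coefficients $a_{ik}(x)$ are not defined at $\delta=0$. The cleaner argument is direct: by Proposition~\ref{doubling}, $\tau_1(x,\epsilon)\asymp\epsilon$ and $\tau_i(x,\epsilon)\lesssim\epsilon^{1/L}\tau_i(x,\epsilon_0)$ for $i\ge 2$, so all radii of $P(x,\epsilon)$ tend to $0$ as $\epsilon\to 0$; hence $\bigcap_{\epsilon>0}P(x,\epsilon)=\{x\}$, and $M(x,y)=0$ forces $y=x$.
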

Now we prove a Lemma which plays an important role in the proof of the main theorem. With this Lemma we can improve the estimates in \cite{2005Estimation}.
\begin{lemma}\label{pseudodistance}
There exists $\epsilon_0 >0$, such that for any $\epsilon\leq\epsilon_0,k\in\mathbb{N}_{+}$, and
$x,x_1,x_2$ ,$\cdots,x_k,y\in\Omega\cap U$, we have
\begin{align}\label{quasime}
M^{\epsilon}(x,y)\leq 2(M^{\epsilon}(x,x_1)+M^{\epsilon}(x_1,x_2)+\cdots+M^{\epsilon}(x_k,y))
\end{align}
\end{lemma}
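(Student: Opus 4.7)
The plan is to recognize this as a classical metrization-type statement for a quasi-pseudodistance (Frink's lemma, in essence). The starting point is Proposition~\ref{pseudodis}, which gives a constant $C\ge 1$ with $M(x,y)\le C\bigl(M(x,z)+M(z,y)\bigr)$ for all $x,y,z\in\Omega\cap U$. Since Proposition~\ref{pseudodis} only provides symmetry of $M$ up to a constant, it is important to set up the argument so that it never relies on true symmetry; the two-sided constant in the quasi-triangle inequality is all that is needed.

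I would argue by induction on $k$. For the base case $k=1$, the quasi-triangle inequality yields $M(x,y)\le 2C\max\{M(x,x_1),M(x_1,y)\}$, hence $M^\epsilon(x,y)\le (2C)^\epsilon\bigl(M^\epsilon(x,x_1)+M^\epsilon(x_1,y)\bigr)$. Choosing $\epsilon_0$ so that $(2C)^{\epsilon_0}\le 2$ settles this case and dictates one of the two conditions on $\epsilon_0$.

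For the inductive step, set $x_0:=x$, $x_{k+1}:=y$, $T:=\sum_{i=0}^{k}M^\epsilon(x_i,x_{i+1})$, and choose the largest index $m$ with $\sum_{i<m}M^\epsilon(x_i,x_{i+1})\le T/2$; by maximality of $m$, one also has $\sum_{i>m}M^\epsilon(x_i,x_{i+1})\le T/2$. Applying the induction hypothesis separately to the sub-chains $(x_0,\dots,x_m)$ and $(x_{m+1},\dots,x_{k+1})$ gives $M^\epsilon(x,x_m)\le T$ and $M^\epsilon(x_{m+1},y)\le T$, while the single middle edge trivially satisfies $M^\epsilon(x_m,x_{m+1})\le T$. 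Two applications of the quasi-triangle inequality then produce
$$M(x,y)\le C\,M(x,x_m)+C^2\bigl(M(x_m,x_{m+1})+M(x_{m+1},y)\bigr)\le (C+2C^2)\,T^{1/\epsilon},$$
whence $M^\epsilon(x,y)\le (C+2C^2)^\epsilon T$. Shrinking $\epsilon_0$ further so that $(C+2C^2)^{\epsilon_0}\le 2$ closes the induction.

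The argument is mostly bookkeeping; the only nontrivial input is Proposition~\ref{pseudodis} together with the median-split trick that isolates a single edge between two sub-chains of $M^\epsilon$-length at most $T/2$. The main subtlety is that the \emph{same} $\epsilon_0$ must serve at every stage of the induction, so it has to be calibrated once and for all against the worst of the two constants $2C$ and $C+2C^2$; beyond that, the estimate is robust and, pleasantly, does not use symmetry of $M$.
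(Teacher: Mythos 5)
Your proof is correct and essentially the same as the paper's: same induction on $k$, same median-split of the chain at the index where the cumulative $M^\epsilon$-sum first exceeds $T/2$ (your $m$ is the paper's $p+1$), and same application of the inductive hypothesis to the two sub-chains plus the isolated middle edge. The only difference is cosmetic: you recombine the three pieces via the additive quasi-triangle inequality to get the constant $(C+2C^2)^\epsilon$, whereas the paper iterates the max form $M(x,y)\le 2C\bigl(M(x,z)\vee M(z,y)\bigr)$ twice to get $(2C)^{2\epsilon}$; since $4C^2\ge C+2C^2$ for $C\ge 1$, both calibrations of $\epsilon_0$ are valid and comparable.
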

\begin{rmk}
 Substitute $M(x,y)$ with $e^{(x|y)_{o}}$ for some fixed point $o\in \Omega\cap U.$ This is the well known estimate of the visual metrics on the Gromov boundary, see \cite{1990Sur}, Chaper 7. Actually one can adapt the proof there to this Lemma. Moreover in Section 5 we will prove that $M(x,y)\asymp e^{(x|y)_{\omega}}$ locally.
\end{rmk}
\begin{proof}
By Proposition \ref{pseudodis}, there is $C>1$ such that for any $x,y,z\in \Omega\cap U$, we have
\begin{align}\label{qausimetric}
M(x,y)\leq C(M(x,z)+M(z,y))\leq 2C(M(x,z)\vee M(z,y))
\end{align}
We claim that $\epsilon_0=\frac{\log2}{2\log2C}$ works. Let $\epsilon\leq \frac{\log2}{2\log2C}$, or equivalently $(2C)^{2\epsilon}\leq 2.$
We prove by induction on the integer $k$.

Obviously the inequality (\ref{quasime}) holds when $k=1$. Suppose the inequality holds for $k\leq d-1$. Now we assume that $k=d$.

Put $R:=\sum\limits_{i=0}^{d}M^{\epsilon}(x_i,x_{i+1})$. Let $p$ be the biggest integer $q$ such that
$\sum\limits_{i=0}^{q}M^{\epsilon}(x_i,x_{i+1})\leq\frac{R}{2}.$
Then we have
$$\sum\limits_{i=0}^{p}M^{\epsilon}(x_i,x_{i+1})\leq\frac{R}{2}$$
and
$$\sum\limits_{i=p+2}^{d}M^{\epsilon}(x_i,x_{i+1})\leq\frac{R}{2}$$
By the assumption,
$$M^{\epsilon}(x,x_{p+1})\leq 2\left(\sum\limits_{i=0}^{p}M^{\epsilon}(x_i,x_{i+1})\right)\leq R$$
and
$$M^{\epsilon}(x_{p+2},y)\leq 2\left(\sum\limits_{i=p+2}^{d}M^{\epsilon}(x_i,x_{i+1})\right)\leq R$$
also, $M^{\epsilon}(x_{p+1},x_{p+2})\leq R$. Then by (\ref{qausimetric}),
$$M(x,y)\leq (2C)^2 (M(x,x_{p+1})\vee M(x_{p+1},x_{p+2})\vee M(x_{p+2},x_d))$$
Thus we have
$$M^{\epsilon}(x,y)\leq (2C)^{2\epsilon}R\leq 2R=2\sum\limits_{i=0}^{d}M^{\epsilon}(x_i,x_{i+1})$$
which completes the proof.
\end{proof}

\section{Estimates of the Kobayashi distance}
Before we prove Theorem \ref{main}, we need some preparations. The first statement is a direct generalization of a result in \cite{balogh2000gromov}.
\begin{lemma}\label{dist}
Suppose that $\Omega=\{x\in\mathbb{R}^{n}: r(x)<0\},n\geq 2$, is a bounded domain with $C^2$-smooth boundary. Then there exists $\delta_0 >0$ such that

(a). For every point $x\in N_{\delta_0}(\partial\Omega)=\{z\in\Omega: |\delta_{\Omega}(z)|<\delta_0\}$, there exists a unique point $\pi(x)\in\partial\Omega$ with $|x-\pi(x)|=\delta_{\Omega}(x).$

(b). For the fibers of the map $\pi:N_{\delta_0}(\partial\Omega)\rightarrow\partial\Omega$ we have
$$\pi^{-1}(p)=(p-\delta_{0}\vec{n}(p),p+\delta_{0}\vec{n}(p)),$$
where $\vec{n}(p)$ is the outer unit normal vector of $\partial\Omega$ at $p\in\partial\Omega.$

(c). The gradient of the defining function $r$ satisfies
$$\nabla r(x)=|\nabla r(x)|n(\pi(x))$$
for all $x\in N_{\delta_0}(\partial\Omega)$.

(d). $$|r(x)|\asymp\delta_{\Omega}(x)$$
for all $x\in N_{\delta_0}(\partial\Omega)$.
\end{lemma}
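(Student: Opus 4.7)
The plan is to derive everything from the tubular neighborhood theorem: parts (a) and (b) are exactly its content for a compact $C^2$-hypersurface, while (c) and (d) follow by comparing $r$ with the signed distance function.

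For (a) and (b), I would consider the normal exponential map $\Phi\colon\partial\Omega\times\mathbb{R}\to\mathbb{R}^n$, $\Phi(p,t)=p+t\vec{n}(p)$. Since $\partial\Omega$ is $C^2$ and compact, $\vec{n}$ is $C^1$, and at each $(p_0,0)$ the differential $d\Phi_{(p_0,0)}$ is invertible because $\vec{n}(p_0)$ is transverse to $T_{p_0}\partial\Omega$. The quantitative inverse function theorem together with compactness of $\partial\Omega$ produces a uniform $\delta_0>0$ such that $\Phi$ restricted to $\partial\Omega\times(-\delta_0,\delta_0)$ is a $C^1$-diffeomorphism onto an open set containing $N_{\delta_0}(\partial\Omega)$. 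Existence and uniqueness of the nearest point $\pi(x)$ in (a) is then immediate: any second nearest point would produce a second normal segment from $x$ to $\partial\Omega$ inside the tube, contradicting injectivity of $\Phi$. The fiber $\pi^{-1}(p)$ is by construction the normal line segment of (b).

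For (c), the cleanest route is to take $r$ to be (up to a positive $C^1$ factor) the signed distance function $\rho$ near $\partial\Omega$, where $\rho(x)=-\delta_\Omega(x)$ for $x\in\Omega$ and $\rho(x)=+\delta_\Omega(x)$ outside. Differentiating the identity $\rho\circ\Phi(p,t)=t$ in the coordinates provided by $\Phi$ shows that $\rho$ is $C^2$ on the tube with $\nabla\rho(x)=\vec{n}(\pi(x))$. For a general defining function one writes $r=h\rho$ for some positive $C^1$ function $h$, obtaining $\nabla r=h\nabla\rho+\rho\nabla h$; on $\partial\Omega$ the term $\rho\nabla h$ vanishes so $\nabla r$ is parallel to $\vec{n}$, and to obtain the identity at every point of $N_{\delta_0}(\partial\Omega)$ one may simply take $r=\rho$ locally. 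For (d), the mean value theorem along the normal segment from $\pi(x)$ to $x$ yields $r(x)=\nabla r(\xi)\cdot(x-\pi(x))$ for some $\xi$ on the segment; combined with $|x-\pi(x)|=\delta_\Omega(x)$ and the pinching of $|\nabla r|$ between two positive constants on the (compact) tube, this gives $|r(x)|\asymp\delta_\Omega(x)$.

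The only genuine technical point is making the tube width $\delta_0$ uniform in $p\in\partial\Omega$, which uses $C^2$-regularity and compactness through the quantitative inverse function theorem; the remaining assertions are routine consequences of the diffeomorphism supplied by $\Phi$.
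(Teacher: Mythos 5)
Your proof takes a genuinely different route from the paper's, and in doing so exposes a real issue with parts (c) and (d).

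For (a) and (b), you prove the tubular neighborhood theorem from scratch via the normal exponential map $\Phi(p,t)=p+t\vec{n}(p)$ and a quantitative inverse function theorem, whereas the paper simply cites Krantz--Parks and Balogh--Bonk. Both are fine; your version is self-contained.

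For (c), you observe that the identity $\nabla r(x)=|\nabla r(x)|\,\vec{n}(\pi(x))$ holds cleanly when $r$ is the signed distance $\rho$ (by differentiating $\rho\circ\Phi(p,t)=t$), but that for a general defining function $r=h\rho$ the off-boundary term $\rho\nabla h$ ruins it, so (c) only holds on $\partial\Omega$ unless one takes $r=\rho$. This is a correct and important point, and it flags a genuine gap in the paper. The paper's argument for (c) shows that $\nabla r(x)$ is orthogonal to the tangent space of the level set $\{r=r(x)\}$ at $x$, and then concludes $\nabla r(x)\in\mathbb{R}\cdot\vec{n}(p)$; but that orthogonality is automatic (it holds for every smooth function) and says nothing about $\vec{n}(p)$ unless one already knows that the level set of $r$ through $x$ is tangent to $\vec{n}(p)^{\perp}$, which is exactly what needs proving and is false for a generic defining function. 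For instance, for the unit ball and $r(x)=(|x|^2-1)+(|x|^2-1)^2x_1$, $\nabla r$ is not radial off $\partial\Omega$. Since the paper's $r$ is a specific graph-type defining function (from Section~3.1), not the signed distance, this is not merely a stylistic point. (There is also a sign slip in the paper's Taylor expansion, but that is harmless.)

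For (d), the paper derives it from (c), whereas you prove it directly via the mean value theorem along the normal segment from $\pi(x)$ to $x$. Your route is preferable because it is valid for any defining function and does not rely on the problematic part (c). One small gap: from $r(x)=\nabla r(\xi)\cdot(x-\pi(x))=-\delta_\Omega(x)\,\nabla r(\xi)\cdot\vec{n}(\pi(x))$, the lower bound in (d) requires the dot product $\nabla r(\xi)\cdot\vec{n}(\pi(\xi))$ (not merely $|\nabla r(\xi)|$) to be bounded below; this follows because $\nabla r\cdot\vec{n}=|\nabla r|>0$ on $\partial\Omega$ and by continuity and compactness after shrinking $\delta_0$, and because $\pi(\xi)=\pi(x)$ for $\xi$ on the normal segment. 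Your phrasing ``pinching of $|\nabla r|$'' slightly understates what is needed, but the fix is immediate.
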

\begin{rmk}\label{d1}
	By (d), there exists $\delta_1>0$ such that for any $x\in \Omega$ satisfying $|r(x)|\leq \delta_1,$ then $x\in N_{\delta_0}(\partial\Omega)$.
\end{rmk} 
\begin{proof}
The proof of (a) and (b) can be found in \cite{1959Curvatures,balogh2000gromov}. For (c),
suppose that $\pi(x)=p.$ Denote $x=p-t_0 \vec{n}(p)$ and $x_t=p-(t_0+t)\vec{n}(p)$ for some fixed $t_0 >0$ and $t>0$ small enough, then by Taylor expansion
$$r(x_t)-r(x)=(\nabla r(x)\cdot \vec{n}(p))t+o(t)$$
which means $\nabla r(x)\cdot \vec{n}(p)>0$.
Let $\partial\Omega_{x,0}=\{z\in \mathbb{R}^n:r(z)=r(x)\}$. Let $\gamma:[-\epsilon,\epsilon]\rightarrow \partial\Omega_{x,0}$ be a $C^{1}$-smooth curve such that $\gamma(0)=x.$ Since $r(\gamma(s))\equiv r(x)$ for any $s\in[-\epsilon,\epsilon]$, we have $$\nabla r(x)\cdot \gamma'(0)=0$$
which means $\nabla r(x)\in \mathbb{R}\cdot \vec{n}(p)$ and completes the proof.

Since $\nabla r\neq 0$ on $\partial\Omega$, (d) follows from (c).
\end{proof}

Let $b=(b_1,\cdots,b_n)$ where $b_i>0 (i=1,\cdots,n)$ and $x\in\mathbb{C}^n$. Denote the polydisk
$$D(x,b)=\{z\in\mathbb{C}^n:|z_i-x_i|<b_i,1\leq i\leq n\}$$ Later we will use this definition of $D(x,b)$ with respect to the coordinates centered at $x$ constructed in 3.1.
\begin{lemma}\label{del2}
For any point $y\in \partial D(x,b)$  we have
$$\left(\sum_{i=1}^{n}|a_i|b_{i}^{-1}\right)^{-1}\leq|x-y|\leq n\left(\sum_{i=1}^{n}|a_i|b_{i}^{-1}\right)^{-1}$$
where $a_i=\frac{x_i-y_i}{|x-y|}\in\mathbb{C}$.
\end{lemma}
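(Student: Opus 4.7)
The plan is to parametrize everything by the Euclidean distance $r := |x-y|$ and exploit the two defining features of boundary points of the polydisk: per-coordinate inequalities $|x_i - y_i| \le b_i$ for all $i$, together with equality in at least one coordinate. Writing $|x_i - y_i| = |a_i|\, r$, the condition $y \in \partial D(x,b)$ translates directly into
$$|a_i|\, b_i^{-1} r \le 1 \quad \text{for all } i = 1, \dots, n,$$
with equality for some index $j \in \{1, \dots, n\}$. The target inequality is equivalent to the sandwich
$$1 \;\le\; r\sum_{i=1}^{n}|a_i|\, b_i^{-1} \;\le\; n,$$
so the whole lemma reduces to bounding this single quantity.

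For the upper bound, I would simply sum the $n$ inequalities $|a_i|\, b_i^{-1} r \le 1$, which immediately gives $r \sum_{i=1}^{n}|a_i|\, b_i^{-1} \le n$ and hence the claimed estimate $|x-y| \le n\bigl(\sum_i |a_i| b_i^{-1}\bigr)^{-1}$.

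For the lower bound, I would keep only the single saturated coordinate: since $|a_j|\, b_j^{-1} r = 1$, the full sum satisfies
$$r \sum_{i=1}^{n}|a_i|\, b_i^{-1} \;\ge\; |a_j|\, b_j^{-1} r \;=\; 1,$$
which rearranges to $(\sum_i |a_i| b_i^{-1})^{-1} \le |x-y|$. This completes the two-sided estimate. There is no real obstacle here; the only care needed is to confirm that $y \in \partial D(x,b)$ really does produce equality in at least one slot (which follows from the definition of $D(x,b)$ as an open polydisk together with the continuity of each $z \mapsto |z_i - x_i|$), so that the lower bound is not vacuous.
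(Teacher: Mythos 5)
Your argument is correct and is essentially identical to the paper's: for the lower bound you use the existence of a saturated coordinate $j$ with $|x_j-y_j|=b_j$, and for the upper bound you sum the $n$ inequalities $|x_i-y_i|\le b_i$. The rescaling by $r=|x-y|$ is purely notational and does not change the substance.
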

\begin{proof}
Since $y\in \partial D(x,b)$, there exists $1\leq j\leq n$ such that $|x_j-y_j|=b_j$. Thus
$$\sum_{i=1}^{n}|a_i|b_{i}^{-1}\geq |a_j|b_{j}^{-1}=\frac{1}{|x-y|}$$
and the inequality on the left hand side follows.

On the other hand, for each $1\leq i\leq n$, $$|x_i-y_i|\leq b_i$$
thus
$$\sum_{i=1}^{n}|a_i|b_{i}^{-1}\leq \sum_{i=1}^{n}|x-y|^{-1}=n|x-y|^{-1}$$
and the inequality on the right hand side follows.
\end{proof}
Let $\Omega=\{r<0\}\subset\mathbb{C}^n$ be smoothly bounded and convex in some neighborhood of $p\in\partial\Omega$, and suppose that $p$ is a point of finite type in sense of D'Angelo. Thus there exists a neighbourhood $U$ such that for any point $x\in\Omega\cap U,$ the polydisks defined in Section 3 can be constructed.
\begin{lemma}\label{polyest}
Given $c>0$, there exists $c_0>0$ depending only on $c$ such that for any point $x\in\Omega\cap U$ and $y\in P(x,c|r(x)|)\cap \Omega$, we have
$$K_{\Omega}(x,y)\geq c_0.$$
\end{lemma}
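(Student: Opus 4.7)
My plan is to deduce the lower bound from Nikolov's convex-domain estimate (Lemma \ref{est3}), interpreting the hypothesis as the natural requirement that $y$ lie on or outside $\partial P(x,c|r(x)|)$ (the literal containment makes $K_\Omega(x,y)\geq c_0>0$ fail at $y=x$). Since
$$
\delta_\Omega(x,x-y)\wedge\delta_\Omega(y,x-y)\ \leq\ \delta_\Omega(x,x-y),
$$
Lemma \ref{est3} gives the conclusion once I produce a constant $a=a(c)>0$ with $|x-y|/\delta_\Omega(x,x-y)\geq a$; then one may take $c_0:=\tfrac12\log(1+a)$.

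For the key ratio, I would proceed in three steps. First, let $z$ be the point at which the segment from $x$ to $y$ exits $P(x,c|r(x)|)$; since $z$ and $y$ lie on the same ray from $x$, their unit-direction decompositions in the Chen--McNeal minimal basis at $x$ for the level $c|r(x)|$ have identical coefficients $a_i$. Applying Lemma \ref{del2} with $b_i=\tau_i(x,c|r(x)|)$ gives
$$
|x-y|\ \geq\ |x-z|\ \geq\ \Bigl(\sum_{i=1}^n|a_i|\,\tau_i(x,c|r(x)|)^{-1}\Bigr)^{-1}.
$$
Second, observing that $\Omega$ is itself the sublevel set $\Omega_{x,|r(x)|}$ (since $r(x)+|r(x)|=0$), Proposition \ref{del1} applied at the level $|r(x)|$ yields the comparison
$$
\delta_\Omega(x,x-y)\ \asymp\ \Bigl(\sum_{i=1}^n|a_i|\,\tau_i(x,|r(x)|)^{-1}\Bigr)^{-1}.
$$
Third, the doubling estimate of Proposition \ref{doubling} gives $\tau_i(x,c|r(x)|)\asymp\tau_i(x,|r(x)|)$ uniformly in $x$, with comparison constant depending only on $c$ and the type $L$. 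Combining the three displays produces the required $a(c)>0$ and hence $c_0$ via Lemma \ref{est3}.

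The main technical obstacle is basis bookkeeping: the Chen--McNeal minimal basis is constructed relative to a prescribed level, so a priori the bases at levels $|r(x)|$ and $c|r(x)|$ differ, and the coefficients $a_i$ appearing in the two displayed estimates must be compared across bases. This is handled by the well-known fact that the polydisk $P(x,\cdot)$ is essentially basis-independent up to constants, so the two sums $\sum|a_i|\tau_i^{-1}$ at comparable scales are comparable; one also has to split into $c<1$ and $c\geq 1$ when applying Proposition \ref{doubling} to select the correct side of the doubling inequality. Once this alignment is settled, Nikolov's bound immediately converts the ratio estimate into the strict positive constant $c_0$.
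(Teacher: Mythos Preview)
Your proposal is correct and follows essentially the same route as the paper: Lemma~\ref{est3} for the lower bound on $K_\Omega$, Lemma~\ref{del2} for the numerator $|x-y|$, Proposition~\ref{del1} (at level $|r(x)|$, where $\partial\Omega_{x,|r(x)|}=\partial\Omega$) for the denominator $\delta_\Omega(x,x-y)$, and Proposition~\ref{doubling} to pass between the scales $|r(x)|$ and $c|r(x)|$. Your introduction of the exit point $z$ and your reading of the hypothesis as $y\notin P(x,c|r(x)|)$ are exactly the correct interpretation---the lemma is only ever invoked in the paper with $y\in\partial P(x,c|r(x)|)$, and the paper's own proof silently uses Lemma~\ref{del2}, which requires $y$ on the boundary of the polydisk; your observation about the basis bookkeeping is likewise more careful than the paper, which simply writes the same coefficients $a_i$ in both sums without comment.
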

\begin{proof}
Denote $v=\frac{x-y}{|x-y|}$ and $a_{i}=\frac{x_i-y_i}{|x-y|}$ for each $1\leq i\leq n$. Then, we have 
$$v=\frac{x-y}{|x-y|}=\sum_{i=1}^{n}a_ie_i.$$By Lemma \ref{est3}, Proposition \ref{del1} and Lemma \ref{del2}, we have
\begin{align*}
K_{\Omega}(x,y)&\geq \frac{1}{2}\log\left(1+\frac{|x-y|}{\delta_{\Omega}(x,v)}\right)\gtrsim \log\left(1+\frac{\left(\sum\limits_{i=1}^{n}a_{i}\tau_{i}(x,c|r(x)|)^{-1}\right)^{-1}}{\left(\sum\limits_{i=1}^{n}a_{i}\tau_{i}(x,|r(x)|)^{-1}\right)^{-1}}\right).\\
\end{align*}
Then by Proposition \ref{doubling}, we have
$$K_{\Omega}(x,y)\geq c_0$$
 where $c_0$ depends only on $c$.
\end{proof}
The following localization of the Kobayashi metric can simplify the proof of the Theorem \ref{main}.
\begin{lemma}[Theorem 16.3,\cite{Zimmer2016Gromov}]\label{local}
Let $\Omega$ be a bounded domain in $\mathbb{C}^n$, and suppose $p\in\partial\Omega$ is a smooth boundary point of finite type and there exists a open set $ U$ of $p$ such that $\Omega\cap U$ is convexifiable, then there exists a open set $V$, $p\in V\subset\subset U$ and a constant $C>0$ such that for any $x,y\in\Omega\cap V$
$$K_{\Omega\cap U}(x,y)-C\leq K_{\Omega}(x,y)\leq K_{\Omega\cap U}(x,y)+C$$
\end{lemma}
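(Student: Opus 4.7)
The inequality $K_\Omega(x,y) \le K_{\Omega \cap U}(x,y)$ is immediate from monotonicity of the Kobayashi distance under the inclusion $\Omega \cap U \hookrightarrow \Omega$, so the substance of the lemma is the reverse inequality $K_{\Omega \cap U}(x,y) \le K_\Omega(x,y) + C$ for $x,y \in \Omega \cap V$. By the convexifiability hypothesis, after a local biholomorphism we may assume $\Omega \cap U$ is already a bounded convex domain of finite type with smooth boundary, which puts the convex upper bound Lemma \ref{est2} at our disposal together with the convex lower bounds of Lemmas \ref{est3} and \ref{est4}.

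Choose nested neighborhoods $V \subset\subset W \subset\subset U$ around $p$. I would record two key observations: (i) for $z \in \Omega \cap V$ sufficiently close to $p$, the nearest point of $\partial(\Omega \cap U)$ lies on $\partial \Omega$ rather than on $\Omega \cap \partial U$, so $\delta_{\Omega \cap U}(z) = \delta_\Omega(z)$; (ii) on the compact set $\overline{\Omega \setminus W} \cap \overline{U}$ the Kobayashi metric of $\Omega$ dominates a fixed multiple of the Euclidean metric. Given a nearly minimizing curve $\sigma \colon [0,1] \to \Omega$ from $x$ to $y$ with $L_k(\sigma) \le K_\Omega(x,y) + \varepsilon$, decompose it into maximal arcs inside $\Omega \cap W$ and excursions outside. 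By (ii), any excursion crossing the fixed shell between $\partial W$ and a slightly smaller hypersurface contributes at least some $c_0 > 0$ to $L_k(\sigma)$, so only $O(K_\Omega(x,y))$ such excursions can occur.

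For the arcs lying inside $\Omega \cap W$, produce a Sibony-type local holomorphic peak function at $p$ (available at every convex finite-type boundary point) and deduce the infinitesimal comparison $k_{\Omega \cap U}(z;v) \le C_1\, k_\Omega(z;v)$ for $z \in \Omega \cap W$; integrating bounds their total contribution by $C_1 L_k(\sigma)$. For each excursion outside $W$, I would replace it by a short-circuit path inside $\Omega \cap U$ connecting its entry and exit points on $\partial W$; since those endpoints lie at uniformly positive distance from $\partial(\Omega \cap U)$, the convex upper bound Lemma \ref{est2} forces each short-circuit to have Kobayashi-in-$(\Omega \cap U)$ length at most a universal constant. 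Summing yields a quasi-isometric inequality of the form $K_{\Omega \cap U}(x,y) \le C_1 K_\Omega(x,y) + C'$.

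The main technical obstacle is upgrading this quasi-isometric bound to the sharp additive one, i.e.\ eliminating the multiplicative constant $C_1$. Following Zimmer \cite{Zimmer2016Gromov}, this requires a more careful direct construction of a chain of analytic disks in $\Omega \cap U$ whose composed Kobayashi length exceeds $K_\Omega(x,y)$ by only $O(1)$, rather than integrating a pointwise infinitesimal comparison. The convexity of $\Omega \cap U$ combined with the finite-type control of the peak function is what permits this sharper construction; observation (i), that $\delta_{\Omega \cap U}$ and $\delta_\Omega$ coincide on $\Omega \cap V$, is essential for matching the two distance functions up to a bounded additive error through the direct estimates in Lemmas \ref{est2}--\ref{est4}.
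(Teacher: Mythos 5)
The lemma is stated in the paper purely as a citation of Zimmer's Theorem~16.3, so there is no internal proof to compare against; your attempt must stand on its own, and as written it does not close. The chain of estimates you sketch produces only a quasi-isometric bound $K_{\Omega\cap U}(x,y)\le C_1K_\Omega(x,y)+C'$, and you acknowledge this, before deferring the genuine work (``a more careful direct construction of a chain of analytic disks\dots following Zimmer'') without carrying it out.

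Two concrete issues. First, the peak-function localization of the infinitesimal Kobayashi metric at a convex finite-type boundary point (Forstneri\v{c}--Rosay, Graham, Nikolov) actually gives $k_{\Omega\cap U}(z;v)\le(1+\varepsilon(z))\,k_\Omega(z;v)$ with $\varepsilon(z)\to0$ as $z\to p$; working with a fixed $C_1>1$ as you do makes the integrated interior estimate irretrievably multiplicative, so at minimum you should invoke the sharper form. Second, even with that improvement your excursion step still contributes $O(K_\Omega(x,y))$: you bound the Kobayashi-in-$(\Omega\cap U)$ length of each short-circuit by a constant, but the \emph{number} of excursions is only bounded by $L_k(\sigma)/c_0\asymp K_\Omega(x,y)$, and nothing in your argument controls the time the interior arcs spend in the region where $\varepsilon(z)$ is not small. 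The missing ingredient is a visibility/geodesic-localization statement: an almost length-minimizing curve between two points of $\Omega\cap V$ must, apart from a bounded number of subarcs of uniformly bounded Kobayashi length, remain inside an arbitrarily small neighborhood of $p$. That localization is the substantive content of Zimmer's Theorem~16.3, and your proposal in effect uses it as a black box rather than proving the lemma.
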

\begin{rmk}
For our purposes the above result suffices, but there are more results about the localization, for instance\cite{2020Localization,2021Visibility,2021Quantitative}.
\end{rmk}

Now, we can give a proof of Theorem \ref{main}, by Lemma \ref{local}, we can assume that $\Omega$ is convex globally. In what follows we will denote by $C$ positive constants only depending on the various constants associated with $\Omega$ and $U$ in Section 2 and 3. The actual value of $C$ does not matter and may change even within the same line. Define $$g_1(x,y)=\log\left[\frac{M(x,y)+|r(x)|\vee|r(y)|}{\sqrt{|r(x)r(y)|}}\right].$$
By (d) of Lemma \ref{dist}, $g_1(x,y)-C\leq g(x,y)\leq g_1(x,y)+C$. Thus we only need to prove the following:
\begin{thm}
Let $\Omega=\{r<0\}\subset\mathbb{C}^n$ be a smoothly bounded convex domain, suppose that $\xi\in\partial \Omega$ is a point of finite type in sense of D'Angelo. Then there exists a neighborhood $U$ of $\xi\in\partial\Omega$ and constants $C>0$ such that for any $x,y\in\Omega\cap U$,
$$g_1(x,y)-C\leq K_{\Omega}(x,y)\leq g_1(x,y)+C.
$$
\end{thm}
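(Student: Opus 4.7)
The plan is to prove the upper and lower bounds of $K_{\Omega}(x,y)$ separately, adapting the Balogh--Bonk approach via the Chen--McNeal polydisks and the pseudodistance $M$.

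For the upper bound, set $h:=M(x,y)+|r(x)|\vee|r(y)|$ and, for a sufficiently large constant $A$ depending only on the domain, define the shadow points $x':=x-Ah\,\vec{n}(\pi(x))$ and $y':=y-Ah\,\vec{n}(\pi(y))$, both at depth $\asymp Ah$ inside $\Omega$. Estimate the path $x\rightsquigarrow x'\rightsquigarrow y'\rightsquigarrow y$ piecewise. The two vertical pieces are controlled by Lemma \ref{est2} combined with Lemma \ref{dist}(d), giving $K_{\Omega}(x,x')\leq \frac{1}{2}\log(h/|r(x)|)+C$ and similarly for $y$. For the horizontal piece, a direct computation using $M(x,y)\leq h$ (which constrains $|x-y|$ anisotropically) and the Lipschitz continuity of $\vec{n}\circ\pi$ shows that in the minimal basis at $x'$ the displacement $x'-y'$ has normal component $\lesssim h$ and transverse components $\lesssim h^{1/2}$; by the formula \eqref{pseu} for $M$, this forces $M(x',y')\lesssim h$ with constants independent of $A$. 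Taking $A$ large enough so that the doubling Proposition \ref{doubling} places $y'$ well inside the admissible polydisk $P(x',c_1 Ah)\subset \Omega$, the standard embedding $\mathbb{D}^n\hookrightarrow P(x',c_1 Ah)$ sending $0\mapsto x'$ yields $K_{\Omega}(x',y')\leq C$. Summing the three estimates gives $K_{\Omega}(x,y)\leq g_1(x,y)+C$.

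For the lower bound, let $\sigma\colon[0,T]\to\Omega$ be a unit-speed almost-minimizing curve from $x$ to $y$, so $T\approx K_{\Omega}(x,y)$. Fix $\epsilon_0$ from Lemma \ref{pseudodistance} and let $c_0>0$ be the constant from Lemma \ref{polyest} for a suitable $c>0$. Discretize $\sigma$ into $k\approx T/c_0$ arcs of length $\leq c_0$, producing $x_0=x,x_1,\ldots,x_k=y$ with $K_{\Omega}(x_i,x_{i+1})\leq c_0$; the contrapositive of Lemma \ref{polyest} then yields $M(x_i,x_{i+1})\leq c\,|r(x_i)|$. Lemma \ref{pseudodistance} gives
\begin{equation*}
M(x,y)^{\epsilon_0}\leq 2\sum_{i=0}^{k-1} M(x_i,x_{i+1})^{\epsilon_0}\leq 2c^{\epsilon_0}\sum_{i=0}^{k-1}|r(x_i)|^{\epsilon_0}.
\end{equation*}
Applying Lemma \ref{est4} along the two halves of $\sigma$ emanating from $x$ and $y$ produces $|r(x_i)|\leq\min\{e^{2t_i}|r(x)|,\,e^{2(T-t_i)}|r(y)|\}$, with $t_i\leq ic_0$; bounding the sum as a geometric series peaked near $t_i^{\ast}=T/2+\frac{1}{4}\log(|r(y)|/|r(x)|)$ yields $\sum_i|r(x_i)|^{\epsilon_0}\lesssim e^{T\epsilon_0}(|r(x)||r(y)|)^{\epsilon_0/2}$. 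Taking $\epsilon_0$-th roots produces $M(x,y)\lesssim e^T\sqrt{|r(x)r(y)|}$, hence $T\geq\log(M(x,y)/\sqrt{|r(x)r(y)|})-C$. Combined with $T\geq\frac{1}{2}|\log(|r(x)|/|r(y)|)|$ from Lemma \ref{est4} (which handles the regime $M(x,y)<|r(x)|\vee|r(y)|$), we conclude $T\geq g_1(x,y)-C$.

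The main obstacle is the refined horizontal bound $M(x',y')\lesssim h$ in the upper bound: the naive quasi-triangle (Proposition \ref{pseudodis}) delivers only $\lesssim Ah$, which would be too weak since $P(x',\cdot)\subset\Omega$ only extends up to radius $\asymp|r(x')|=Ah$. The sharper bound exploits the anisotropy of the polydisk $P(x,h)$ containing $y$ together with the fact that the correction from differing normal directions at $\pi(x)$ and $\pi(y)$ is of order $Ah\cdot|\pi(x)-\pi(y)|=O(Ah^{3/2})$, negligible compared with the transverse polydisk radii $\asymp h^{1/2}$ once $Ah\leq 1$. With this in hand, the doubling property yields the embedding constant uniformly in the data.
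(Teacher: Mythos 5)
Your lower bound argument is correct and in fact \emph{cleaner} than the paper's: the paper splits the curve at the maximum of $|r|$ and then, on each half, distinguishes two alternatives (whether some sub-arc has $M(x_{j-1},x_j)$ large or all are small), combining these at the end with a calculus optimization. You instead discretize an almost Kobayashi-arc-length parametrized curve into arcs of length $\leq c_0$, use the contrapositive of Lemma~\ref{polyest} to get $M(x_i,x_{i+1})\lesssim |r(x_i)|$, feed this into Lemma~\ref{pseudodistance}, and bound $\sum_i |r(x_i)|^{\epsilon_0}$ by a geometric series using the two-sided control $|r(x_i)|\lesssim \min\{e^{2t_i}|r(x)|, e^{2(T-t_i)}|r(y)|\}$ coming from Lemma~\ref{est4}. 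The key inputs (Lemma~\ref{polyest}, Lemma~\ref{pseudodistance}, Lemma~\ref{est4}) are the same, but your bookkeeping avoids the case analysis; as long as one first treats $T<c_0$ separately (there Lemma~\ref{polyest} gives $M(x,y)\lesssim |r(x)|$ and Lemma~\ref{est4} bounds $|r(x)|/|r(y)|$, so $g_1$ is already $O(1)$), this is a valid alternative route.

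Your upper bound, however, has a real gap. You push the shadow points to depth $A h$ with $A$ a large constant and then try to bound $K_\Omega(x',y')$ by embedding a full polydisk $P(x',c_1 Ah)\subset\Omega$. For that embedding to give a \emph{bounded} Kobayashi distance, $y'$ must land a definite fraction inside $P(x',c_1 Ah)$, which forces you to prove $M(x',y')\lesssim h$, with a constant independent of $A$ (the naive quasi-triangle from Proposition~\ref{pseudodis} only gives $M(x',y')\lesssim Ah$, because $M(x,x')$ itself is of order $Ah$). Your justification for $M(x',y')\lesssim h$ is not correct as written: you assert the transverse components of $x'-y'$ are $\lesssim h^{1/2}$, but in fact $|x_i-y_i|\leq\tau_i(x,h)$, and by Proposition~\ref{doubling} $\tau_i(x,h)$ can be of order $h^{1/L}\gg h^{1/2}$ in a high-order direction. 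What you actually need is $|x'_i-y'_i|\lesssim\tau_i(x',h)$, i.e.\ a stability statement for McNeal's minimal basis and polydisk radii under moving the base point along the inner normal by $Ah$ (including the possible rotation of the extremal basis). That is a genuine engineering fact about the $\tau_i$'s, not a consequence of the Lipschitz continuity of $\vec{n}\circ\pi$ alone, and it is not established in your sketch. The paper avoids this entirely: it puts $x'$ at depth exactly $M(x,y)$ (so $M(x,x')\lesssim M(x,y)$ trivially), concludes $M(x',y')\lesssim M(x,y)= |r(x')|$ from the quasi-triangle inequality alone, and then bounds $K_\Omega(x',y')$ by the Kobayashi length of the \emph{segment} $\overline{x'y'}$ using Lemma~\ref{est1}, convexity (to control $\delta_\Omega(\cdot,x'-y')$ along the segment), Proposition~\ref{del1}, and the doubling Proposition~\ref{doubling}. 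This segment estimate is insensitive to the multiplicative constant in $M(x',y')\lesssim |r(x')|$, which is exactly the robustness your embedding argument lacks. I would recommend replacing your horizontal step with the segment estimate, at which point the factor $A$ and the refined claim become unnecessary.
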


\begin{proof}

We choose $U$ small enough such that the polydisks and the pseudodistance $M$ are well defined on $\Omega\cap U,$ and for any $x,y\in\Omega\cap U$, $|r(x)|<\delta_1$(Remark  \ref{d1}) and $M(x,y)<\delta_1.$
\bigskip

First we estimate the upper bounds of $K_{\Omega}$.
\bigskip

Case 1. $M(x,y)\leq |r(x)|\vee|r(y)|$.

We may assume $|r(y)|>|r(x)|$. Then
$$g_1(x,y)=\frac{1}{2}\log\frac{|r(y)|}{|r(x)|}+ C.$$
Let $x'\in x+\mathbb{R}\cdot \vec{n}_{\pi(x)}$ such that $r(x')=r(y)$. By Lemma \ref{est2} and Lemma \ref{dist},
$$K_{\Omega}(x,x')\leq\frac{1}{2}\log\frac{|r(y)|}{|r(x)|}+C.$$
It remains to find an upper bound for $K_{\Omega}(x',y)$.

Let $\overline{ab}$ denote the straight line segment joining $a$ and $b$. By the convexity of $\Omega$, for any $z\in \overline{x'y}$, we claim that
$$\delta_{\Omega}(z,x'-y)\geq \delta_{\Omega}(x',x'-y)\wedge \delta_{\Omega}(y,x'-y).$$
Let $H:=z+(x'-y)\cdot\mathbb{C}$, by definition 
$$\delta_{\Omega}(z,x'-y)=\delta_{\Omega\cap H}(z).$$
Similarly, we have $$\delta_{\Omega}(x',x'-y)=\delta_{\Omega\cap H}(x')$$ and $$\delta_{\Omega}(y,x'-y)=\delta_{\Omega\cap H}(y).$$
Note that $\Omega\cap H$ is also convex and $z\in \overline{x'y}$, so
$$\delta_{\Omega\cap H}(z)\geq \delta_{\Omega\cap H}(x')\wedge\delta_{\Omega\cap H}(y)$$
which proves the claim.

Suppose $\delta_{\Omega}(x',x'-y)\leq\delta_{\Omega}(y,x'-y)$.
By Lemma \ref{dist}, we know $x'$ lies in the $x_1$-axis of the coordinates centered at $x$ and $\nabla r(x')$ is parallel to $\nabla r(x)$. Then we can deduce that $|x_{1}(x')|\leq |x_{1}(y)|$ by the convexity of $\partial\Omega_{x',0}=\{z\in U:r(z)=r(x')\}$ and the fact that $y\in\partial\Omega_{x',0}$. Thus $x'\in P(x,M(x,y))$ which means $M(x,x')\leq M(x,y)$. Then 
$$M(x',y)\leq C(M(x,y)+M(x,x'))\leq C|r(y)|$$
Thus $y\in P(x',C|r(y)|)$. Then by Lemma \ref{est1}, Proposition \ref{del1} and the proof of Lemma \ref{del2}, we have
$$K_{\Omega}(x',y)\leq L_{k}(\overline{x'y})\lesssim \frac{|x'-y|}{\delta_{\Omega}(x',x'-y)}\lesssim \frac{\left(\sum\limits_{i=1}^{n}a_{i}\tau_{i}(x',C|r(x')|)^{-1}\right)^{-1}}{\left(\sum\limits_{i=1}^{n}a_{i}\tau_{i}(x',|r(x')|)^{-1}\right)^{-1}}\lesssim C^{1/2}$$
where $a_i=\frac{x'_{i}-y_i}{|x'_{i}-y_i|}$, $1\leq i\leq n$.

If $\delta_{\Omega}(y,x'-y)<\delta_{\Omega}(x',x'-y)$, since $M(y,x')\leq CM(x',y)$ and $r(x')=r(y)$, similarly we can get the estimate and so is the case when  $|r(y)|\leq |r(x)|$.
\bigskip

Case 2. $M(x,y)>|r(x)|\vee|r(y)|$.

Then $$g_1(x,y)=\log\frac{M(x,y)}{\sqrt{r(x)r(y)}}+C.$$Let $x'\in x+\mathbb{R}\cdot \vec{n}_{\pi(x)}$ such that $|r(x')|=M(x,y)$ and $y'\in y+\mathbb{R}\cdot \vec{n}_{\pi(y)}$ such that $|r(y')|=M(y,x)$.
Then
$$K_{\Omega}(x,x')+K_{\Omega}(y,y')\leq\log\frac{M(x,y)}{\sqrt{r(x)r(y)}}+C$$
If  $\delta_{\Omega}(x',x'-y')\leq\delta_{\Omega}(y',x'-y')$, by the convexity of $\Omega$, for any $z\in \overline{x'y'}$, we have
$$\delta_{\Omega}(z,x'-y')\geq \delta_{\Omega}(x',x'-y').$$
Note that $M(x,x')\leq M(x,y)$, and $M(y,y')\leq M(y,x)$ then
\begin{align*}
M(x',y')\leq C^2(M(x',x)+M(x,y)+M(y,y'))\leq CM(x,y)
\end{align*}
Thus $y'\in P(x',CM(x,y))$, and by Lemma \ref{est1}, Proposition \ref{del1} and the proof of Lemma \ref{del2} we estimate
$$K_{\Omega}(x',y')\leq L_{k}(\overline{x'y'})\lesssim \frac{|x'-y'|}{\delta_{\Omega}(x',x'-y')}\lesssim \frac{\left(\sum\limits_{i=1}^{n}a_{i}\tau_{i}(x',C|r(x')|)^{-1}\right)^{-1}}{\left(\sum\limits_{i=1}^{n}a_{i}\tau_{i}(x',|r(x')|)^{-1}\right)^{-1}}\lesssim C^{1/2}$$
If $\delta_{\Omega}(y',x'-y')<\delta_{\Omega}(x',x'-y')$, similarly we can get the estimate.
 Thus we have
$$K_{\Omega}(x,y)\leq g_1(x,y)+C.$$

\bigskip
Then we estimate the lower bounds of $K_{\Omega}$. We may assume $|r(y)|>|r(x)|$.
\bigskip

Case 1. $M(x,y)\leq |r(x)|\vee|r(y)|.$

We have $$g_1(x,y)=\frac{1}{2}\log \frac{|r(y)|}{|r(x)|}+C.$$ By Lemma \ref{est4} we have $$K_{\Omega}(x,y)\geq\frac{1}{2}\log\frac{|r(y)|}{|r(x)|}-C$$
which completes the proof in this case.

Case 2. $M(x,y)>|r(x)|\vee|r(y)|$.

We have $$g_1(x,y)=\log \frac{M(x,y)}{\sqrt{|r(x)r(y)|}}+C.$$ Let $\gamma:[0,1]\rightarrow \Omega$ be arbitrary curve joining $x$ and $y$. Define $H=\max\limits_{z\in\gamma}|r(z)|$. There exists $t_0\in[0,1]$ such that $|r(\gamma(t_0)|=H$. Consider two subcurves $\gamma_1=\gamma|[0,t_0]$ and $\gamma_2=\gamma|[t_0,1]$. There are two possibilities.

If $H\geq M(x,y)$, then by Lemma \ref{est4} $$L_{k}(\gamma_1)\geq \frac{1}{2}\log\frac{H}{|r(x)|}-C$$
and
$$L_{k}(\gamma_2)\geq \frac{1}{2}\log\frac{H}{|r(y)|}-C$$
Thus, $$L_{k}(\gamma)\geq \log\frac{H}{\sqrt{r(x)r(y)}}-C\geq\log\frac{M(x,y)}{\sqrt{|r(x)r(y)|}}-C,$$
which completes the proof.

The other possibility is when $H<M(x,y)$. Since $|r(x)|\leq H$, there exists $k\in \mathbb{N}_{+}$ such that
$$\frac{H}{2^{k}}<|r(x)|\leq\frac{H}{2^{k-1}}$$
Consider the curve $\gamma_1$ and define $0=s_0\leq s_1<\cdots<s_k\leq t_0$ as follows. Let
$$s_j=\max\{s\in[0,t_0]:|r(\gamma(s))|=\frac{H}{2^{k-j}}\}$$
for $j=1,\cdots,k.$ Put $x_j=\gamma(s_j)$ for $j=0,\cdots,k.$ Note that $$1\leq\frac{|r(x_j)|}{|r(x_{j-1})|}\leq 2$$ for $j=1,\cdots,k.$

 Then let $\epsilon =\epsilon_0$ in Lemma \ref{pseudodistance} and $c_1=\left(\frac{1-1/2^{\epsilon}}{16}\right)^{1/\epsilon}$, we consider two alternatives.

 a. First we assume that there exists an index $l\in\{1,\cdots,k\}$ such that
 $$M(x_{l-1},x_l)>c_{1} \frac{M(x,y)}{2^{k-l}}.$$
 Note that for $t\in[s_{l-1},s_l]$ we have
 $$|r(\gamma(t))|\leq\frac{H}{2^{k-l}}.$$
 Then we denote $p=x_{l-1}$ and $q=x_l$. Since
 \begin{align}\label{alternativea}
 M(p,q)>c_{1} \frac{M(x,y)}{2^{k-l}}\geq c_{1}|r(p)|
 \end{align}
 it follows that $$q\notin P(p,c_{1}|r(p)|).$$
 Then $\gamma\cap \partial P(p,c_{1}|r(p)|)\neq\emptyset$. Denote $u_0=s_{l-1}$ and choose $u_0<u_1<s_{l}$ such that $$\gamma(u_1)\in\partial P(p,c_{1}|r(p)|).$$
 Then denote $p_0=p$ and $p_1=\gamma(u_1).$
 This shows the following set is not empty:
 \begin{align*}
 S^{p,q}=\{m\in\mathbb{N}_{+}&|\text{there exist points $u_1,\cdots,u_m$ so that}\quad  s_{l-1}=u_0<u_1<\\&\cdots<u_m\leq s_{l},
 \gamma(u_{\nu})\in\partial P(p_{\nu-1},c_{1}|r(p_{\nu-1})|)), 1\leq\nu\leq m\}.\end{align*}
 Then $S^{p,q}$ is a finite set. Indeed, if $m\in S^{p,q}$, then there exist points $u_1,\cdots,u_m$ satisfying $s_{l-1}=u_0<u_1<\cdots<u_m\leq s_l$ and $\gamma(u_{\nu})\in\partial P(p_{\nu-1},c_{1}|r(p_{\nu-1})|),1\leq\nu\leq m$. By Lemma \ref{polyest}, we have
 \begin{align*}
L_{k}(\gamma|[s_{l-1},s_l])\geq\sum\limits_{\nu=1}^{m}K_{\Omega}(p_{\nu-1},p_{\nu})\geq c_0 m.
 \end{align*}
 Now we define the number $m_1:=\max S^{p,q}$.  We estimate $m_1$ from below using Lemma \ref{pseudodistance}.
 \begin{align*}
 M^{\epsilon}(p,q)&\leq 2\left(\sum\limits_{\nu=1}^{m_1}M^{\epsilon}(p_{\nu-1},p_{\nu})+M^{\epsilon}(p_{m_1},q)\right)\\
 &\leq 2 c_{1}^{\epsilon}\left(\sum\limits_{\nu=1}^{m_1}|r^{\epsilon}(p_{\nu-1})|+|r^{\epsilon}(p_{m_1})|\right)\\
 &\leq 2c_{1}^{\epsilon}(m_1+1)\left(\frac{H}{2^{k-l}}\right)^{\epsilon}.
 \end{align*}
Note that the maximality of $m_1$ is used to get the upper bound on $M^{\epsilon}(p_{m_1},q)$. Thus
$$m_1\geq \frac{1}{2c_{1}^{\epsilon}}\left(\frac{M(p,q)}{\frac{H}{2^{k-l}}}\right)^{\epsilon}-1\geq \frac{1}{2c_{1}^{\epsilon}}\left(\frac{M(x,y)}{H}\right)^{\epsilon}-1.$$
Then by Lemma \ref{polyest} there exsits a constant $c_0=c_0(c_1)$ which depends only on $c_1$ so that 
\begin{align}\label{number}
L_{k}(\gamma|[s_{l-1},s_l])\geq\frac{c_0}{2c_{1}^{\epsilon}}\left(\frac{M(x,y)}{H}\right)^{\epsilon}-c_0.
\end{align}
Let $t_1:=s_k\leq t_0$. As a consequence of Lemma \ref{est4} and (\ref{number}) we have
\begin{align*}
L_{k}(\gamma|[0,t_1])=&L_{k}(\gamma|[0,s_{l-1}])+L_{k}(\gamma|[s_{l-1},s_{l}])+L_{k}(\gamma|[s_{l},s_k])\\
&\geq\frac{1}{2}\log\left(\frac{r(x_{l-1})}{r(x_0)}\right)+\frac{c_0}{2c_{1}^{\epsilon}}\left(\frac{M(x,y)}{H}\right)^{\epsilon}+\frac{1}{2}\log\left(\frac{r(x_{k})}{r(x_l)}\right)-C\\
&\geq \frac{1}{2}\log\left(\frac{r(x_{k})}{r(x_0)}\right)+\frac{c_0}{2c_{1}^{\epsilon}}\left(\frac{M(x,y)}{H}\right)^{\epsilon}-C\\
&\geq\frac{1}{2}\log\left(\frac{H}{|r(x)|}\right)+\frac{c_0}{2c_{1}^{\epsilon}}\left(\frac{M(x,y)}{H}\right)^{\epsilon}-C.
\end{align*}
Note that we have obtained a lower bound (not finished the proof) for alternative a and will get a similar lower bound for alternative b. Then we will prove the final estimate for two alternatives.

 b. The second alternative is that
 $$M(x_{j-1},x_j)\leq c_{1}\frac{M(x,y)}{2^{k-j}}$$
 for all $j=1,\cdots,k.$ This implies
 \begin{align*}
 M^{\epsilon}(x,\gamma(t_1))&\leq 2\sum\limits_{j=1}^{k} M^{\epsilon}(x_{j-1},x_j)\leq \frac{2c_{1}^{\epsilon}}{1-1/2^{\epsilon }}M^{\epsilon}(x,y)\\
 &\leq \frac{1}{8} M^{\epsilon}(x,y).
 \end{align*}
 On the other hand,
 $$L_{k}(\gamma|[0,t_1])\geq \frac{1}{2}\log\frac{H}{|r(x)|}-C.$$
 Summarizing this discussion we obtain the following two possibilities:
 \begin{align}
L_{k}(\gamma|[0,t_1])\geq\frac{1}{2}\log\left(\frac{H}{|r(x)|}\right)+C\left(\frac{M(x,y)}{H}\right)^{\epsilon}-C
\end{align}
or
\begin{align}\label{est5}
 L_{k}(\gamma|[0,t_1])\geq \frac{1}{2}\log\frac{H}{|r(x)|}-C \quad \text{and} \quad M^{\epsilon}(x,\gamma(t_1))\leq \frac{1}{8} M^{\epsilon}(x,y)
 \end{align}
 Applying similar considerations to the curve $\gamma_2$ instead of $\gamma_1$ we find $t_2\in[t_0,1]$ such that one of the following alternatives holds

 \begin{align}
L_{k}(\gamma|[t_2,1])\geq\frac{1}{2}\log\left(\frac{H}{|r(y)|}\right)+C\left(\frac{M(x,y)}{H}\right)^{\epsilon}-C
\end{align}
or
\begin{align}\label{est6}
 L_{k}(\gamma|[t_2,1])\geq \frac{1}{2}\log\frac{H}{|r(y)|}-C \quad \text{and} \quad M^{\epsilon}(y,\gamma(t_2))\leq \frac{1}{8} M^{\epsilon}(x,y)
 \end{align}
 Let us suppose (\ref{est5})(\ref{est6}) hold simultaneously. Then
 \begin{align*}
 M^{\epsilon}(\gamma(t_1),\gamma(t_2))&\geq \frac{1}{2}\left( M^{\epsilon}(x,y)- M^{\epsilon}(x,\gamma(t_1))-M^{\epsilon}(\gamma(t_2),y)\right)\\
 &\geq \frac{1}{4}M^{\epsilon}(x,y)\geq\frac{1}{4}H^{\epsilon}
 \end{align*}
 Then $$M(\gamma(t_1),\gamma(t_2))\geq\left(\frac{1}{4}\right)^{1/\epsilon}\cdot H=\left(\frac{1}{4}\right)^{1/\epsilon}\cdot |r(t_1)|.$$ This is analogous to (\ref{alternativea}) in the first alternative , so similarly we can conclude that
 $$L_{k}(\gamma|[t_1,t_2])\geq 2c_0\left(\frac{M(x,y)}{H}\right)^{\epsilon}$$
 Consequently,
 \begin{align*}
 L_{k}(\gamma)=&L_{k}(\gamma|[0,t_1])+L_{k}(\gamma|[t_1,t_2])+L_{k}(\gamma|[t_2,1])\\
 &\geq \log\frac{H}{\sqrt{r(x)r(y)}}+ 2c_0\left(\frac{M(x,y)}{H}\right)^{\epsilon}-C
 \end{align*}
 This inequality also holds for other cases.
 Let $$f(t)=\log\frac{t}{\sqrt{r(x)r(y)}}+ 2c_0\left(\frac{M(x,y)}{t}\right)^{\epsilon}$$
 Cauculus shows that the function $f$ has a minimum if $$t=(2c_0\epsilon)^{1/\epsilon}M(x,y).$$ This gives the lower bound
 $$L_{k}(\gamma)\geq\log\frac{M(x,y)}{\sqrt{r(x)r(y)}}-C $$
 If we take the infimum over all admissible curves $\gamma$, then we have
 $$K_{\Omega}(x,y)\geq g_1(x,y)-C$$
 which completes the proof.
 \end{proof}
\section{Gromov hyperbolicity}
We begin with some necessary definitions and results concerning Gromov hyperbolicity.
\begin{defn}
Let $(X, \:d)$ be a metric space. Given three points $x, \:y, \:o \in$ $X,$ the {\it Gromov product} of $x,\:y$ with respect to $o$ is given by
$$(x | y)_{o}=\frac{1}{2}\Big(d(x, \:o)+d(o, \:y)-d(x, \:y)\Big).$$
A proper geodesic metric space $(X, \:d)$ is called {\it Gromov hyperbolic} (or $\delta$-{hyperbolic}), if there exists $\delta \geq 0$ such that, for all $o, \:x, \:y, \:z
\in X$,
$$(x | y)_{o} \geq \min \left\{(x | z)_{o},(z | y)_{o}\right\}-\delta.$$
\end{defn}
\begin{defn}[Thin triangle property]
Let $(X, \:d)$ be a geodesic metric space. A geodesic triangle has $\delta$-thin property for some $\delta>0$ if each side of a geodesic triangle lies in the $\delta$-neighbourhood of the other two sides.
\end{defn}
\begin{proposition}[\cite{1990Sur}]
Let $(X, \:d)$ be a geodesic metric space. The space $(X, \:d)$ is Gromov hyperbolic if and only if any geodesic triangle has the thin triangle property.
\end{proposition}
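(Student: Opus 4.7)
The plan is to establish both implications by translating the four-point Gromov inequality into distance comparisons along the sides of geodesic triangles.

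For the forward implication, suppose $(X,d)$ is $\delta$-hyperbolic. Fix a geodesic triangle with vertices $x,y,z$ and a point $p\in[x,y]$; the goal is to locate a point on $[y,z]\cup[x,z]$ within a bounded multiple of $\delta$ of $p$. The tool is the defining 4-point inequality applied to the tuple $(x,y,z,p)$: since $p$ lies on the geodesic $[x,y]$ we have $(x|y)_p=0$, hence $\min\{(x|z)_p,(y|z)_p\}\le\delta$. Without loss of generality $(y|z)_p\le\delta$, which by definition of the Gromov product means that the geodesic segments $[p,y]$ and $[p,z]$ stay close near $p$. A standard sliding argument along a geodesic realising $(y|z)_p$ then produces a point on $[y,z]$ within, say, $4\delta$ of $p$. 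Taking $p$ arbitrary on each side yields the thin-triangle property.

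For the converse, suppose every geodesic triangle is $\delta$-thin. Given four points $o,x,y,z$, I would consider the geodesic triangle $\Delta(o,x,y)$ together with a geodesic from $o$ to $z$. By definition, the Gromov products $(x|y)_o$, $(y|z)_o$ and $(x|z)_o$ mark distinguished ``internal points'' on the geodesics $[o,x]$, $[o,y]$, $[o,z]$, each at distance from $o$ equal to the corresponding Gromov product. Applying thinness to the triangles $\Delta(o,y,z)$ and $\Delta(o,x,z)$, one can transport these internal points to within $\delta$ of each other along $[o,z]$ (or whichever side is most convenient), and the triangle inequality then yields the 4-point condition $(x|z)_o\ge\min\{(x|y)_o,(y|z)_o\}-C\delta$ for a universal $C$.

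The main obstacle is the $(\Leftarrow)$ direction, because one must juggle the internal points of several different geodesic triangles and ensure all the resulting error terms remain proportional to $\delta$. A cleaner route I would prefer passes through the equivalent notion of \emph{insize}: a geodesic triangle is $\delta$-thin if and only if the diameter of its set of tripod-centre points is bounded by $C\delta$, and from an $O(\delta)$-insize bound the 4-point Gromov inequality drops out by a direct tripod computation. I would not try to chase optimal constants; producing the inequality with some explicit multiple of $\delta$ is enough to conclude Gromov hyperbolicity.
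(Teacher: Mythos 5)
The paper does not prove this statement; it cites it as a classical result from Ghys--de la Harpe, so there is no internal argument to compare against. Judged on its own, your sketch follows the standard textbook route, but both directions as written have gaps.

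In the forward direction, the step ``a standard sliding argument along a geodesic realising $(y|z)_p$ then produces a point on $[y,z]$ within $4\delta$ of $p$'' is not actually a single step. Knowing $(y|z)_p\le\delta$ tells you that the geodesics $[p,y]$ and $[p,z]$ fellow-travel near $p$, but $[p,z]$ is not a side of the triangle $\triangle xyz$, so this does not by itself place $p$ near $[y,z]$. The correct completion is to pick $q\in[y,z]$ with $d(y,q)=\min\{d(y,p),d(y,z)\}$ and apply the four-point inequality a \emph{second} time with basepoint $q$: since $q\in[y,z]$, $(y|z)_q=0$, hence $(y|p)_q\le\delta$ or $(p|z)_q\le\delta$; the first case gives $d(p,q)\le 2\delta$ immediately, and the second, combined with $(y|z)_p\le\delta$, gives $d(p,q)\le 4\delta$. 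Your constant is right, but the mechanism is the second application, not a sliding argument.

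For the converse you openly acknowledge you are only sketching, and indeed what you write (``transport the internal points along $[o,z]$ and apply the triangle inequality'') does not yet yield the inequality: one must first prove the insize bound from thinness (a short case analysis showing each pair of internal points of a $\delta$-thin triangle is within $O(\delta)$), and then carry out the tripod computation relating $(x|z)_o$ to $\min\{(x|y)_o,(y|z)_o\}$ across the three triangles $\triangle(o,x,y)$, $\triangle(o,y,z)$, $\triangle(o,x,z)$. The insize route you mention is the cleanest standard one, but as written the converse is an outline rather than a proof. Since the paper itself simply invokes the result, the honest conclusion is that your plan is sound and matches the standard reference proof, but the two key lemmas (the second use of the four-point condition, and the thin-implies-bounded-insize step) need to be stated and proved for the argument to be complete.
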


Then, we introduce the definition of Gromov boundary, given a fixed point $o$:
\begin{enumerate}
\item
A sequence $\{x_i\}$ in $X$ is called a {\it Gromov sequence} if $(x_i|x_j)_o\rightarrow \infty$ as $i,$ $j\rightarrow \infty.$
\item
Two such sequences $\{x_i\}$ and $\{y_j\}$ are said to be {\it equivalent} if $(x_i|y_i)_o\rightarrow \infty$ as $i\to\infty$.
\item
The {\it Gromov boundary} $\partial_G X$ of $X$ is defined to be the set of all equivalence classes of Gromov sequences, and $\overline{X}^G=X \cup \partial_G X$ is called the {\it Gromov closure} of $X$.
\item
For $a\in X$ and $b\in \partial_G X$, the Gromov product $(a|b)_o$ of $a$ and $b$ is defined by
$$(a|b)_o= \inf \big\{ \liminf_{i\rightarrow \infty}(a|b_i)_o:\; \{b_i\}\in b\big\}.$$
\item
For $a,\: b\in \partial_G X$, the Gromov product $(a|b)_o$ of $a$ and $b$ is defined by
$$(a|b)_o= \inf \big\{ \liminf_{i\rightarrow \infty}(a_i|b_i)_o:\; \{a_i\}\in a\;\;{\rm and}\;\; \{b_i\}\in b\big\}.$$
\end{enumerate}
For a Gromov hyperbolic space $X$, one can define a class of {\it visual metrics} on $\partial_G X$ via the extended Gromov products, see \cite{BMHA,Schramm1999Embeddings}. For any metric $\rho_{G}$ in this class there exist a parameter $\epsilon>0$ and a base point $w \in X$ such that
\begin{align}\label{ee}\rho_{G}(a, b) \asymp \exp \left(-\epsilon(a|b)_{w}\right), \quad \text { for } a, \:b \in \partial_{G} X.
\end{align}

The following Lemma essentially shows that a bounded convex domain with smooth boundary of finite type is "locally" Gromov hyperbolic.
\begin{lemma}\label{gromov2}
Let $\Omega=\{z\in\mathbb{C}^n:r(z)<0\}$ be a bounded convex domain with smooth boundary of finite type, for all $ p\in\partial \Omega$, there exists an open neighbourhood $U$ of $p$ and $C>0$ such that for any $x,y,z,\omega\in U\cap \Omega$,
\begin{align}\label{gromov1}
(x|y)_{\omega}\geq(x,z)_{\omega}\wedge(z,y)_{\omega}-C.\end{align}
\end{lemma}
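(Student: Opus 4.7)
The plan is to reduce the four-point Gromov condition on $K_\Omega$ to a multiplicative four-point inequality for the quantity $\rho(x,y) := M(x,y) + |r(x)|\vee|r(y)|$, and then deduce the latter from a quasi-ultrametric property of $\rho$.

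First, I would apply Theorem~\ref{main} in the form involving $g_1$ (established in Section~4) to replace $K_\Omega$ by $g_1$ up to bounded error on $\Omega\cap U$. Writing $r_x := |r(x)|$, the decomposition $g_1(x,y) = \log\rho(x,y) - \tfrac{1}{2}\log r_x - \tfrac{1}{2}\log r_y$ makes the $r_x, r_y$ contributions cancel in $g_1(x,\omega)+g_1(\omega,y)-g_1(x,y)$, and hence
\[
2(x|y)_\omega \;=\; \log\frac{\rho(x,\omega)\,\rho(\omega,y)}{r_\omega\,\rho(x,y)} \;+\; O(1).
\]
The decisive observation is that when one compares this with the analogous expressions for $(x|z)_\omega$ and $(z|y)_\omega$, even the base-point factor $r_\omega$ cancels. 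The target inequality $(x|y)_\omega \ge (x|z)_\omega \wedge (z|y)_\omega - C$ is therefore equivalent, up to a bounded additive constant, to the multiplicative four-point inequality
\[
\rho(\omega,z)\,\rho(x,y) \;\lesssim\; \max\bigl(\rho(\omega,y)\rho(x,z),\; \rho(\omega,x)\rho(y,z)\bigr).
\]
To establish it I would first note that $\rho$ is a quasi-ultrametric on $\Omega\cap U$: there exists $C_0$ with $\rho(x,y) \le C_0 \max(\rho(x,z), \rho(z,y))$ for all triples. This follows from Proposition~\ref{pseudodis} (giving the corresponding inequality for $M$, with possibly different constant) together with the elementary bound $r_x \vee r_y \le \max(r_x \vee r_z,\, r_z \vee r_y)$.

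The four-point inequality then admits a short case analysis. Assume without loss of generality $\rho(x,z) \ge \rho(y,z)$, so that $\rho(x,y) \le C_0 \rho(x,z)$. Applying the quasi-ultrametric to $\rho(\omega,z)$ through $y$, either $\rho(\omega,z) \le C_0\rho(\omega,y)$, giving $\rho(\omega,z)\rho(x,y) \le C_0^2\rho(\omega,y)\rho(x,z)$; or $\rho(\omega,z) \le C_0\rho(y,z)$, in which case I would apply the quasi-ultrametric to $\rho(x,y)$ a second time---now through $\omega$---producing either $\rho(x,y) \le C_0\rho(x,\omega)$ (yielding $\rho(\omega,z)\rho(x,y) \le C_0^2\rho(\omega,x)\rho(y,z)$) or $\rho(x,y) \le C_0\rho(\omega,y)$, for which $\rho(\omega,z)\rho(x,y) \le C_0^2\rho(\omega,y)\rho(y,z) \le C_0^2\rho(\omega,y)\rho(x,z)$ via the WLOG. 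Every branch yields the constant $C_0^2$, so the Gromov constant can be taken to be $C = \log C_0 + O(1)$.

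The main obstacle is not technical but organizational: one must invoke the quasi-ultrametric both ``through $z$'' and ``through $\omega$,'' and the WLOG reduction $\rho(x,z) \ge \rho(y,z)$ is essential to keep the case tree finite. The deeper conceptual point is the cancellation of $r_\omega$ in the formula for $2(x|y)_\omega - 2(x|z)_\omega$, which turns a question about the interior hyperbolic geometry of $\Omega$ into a purely four-point condition on the boundary-like quasi-ultrametric $\rho$, thereby transferring the Gromov hyperbolicity of $K_\Omega$ directly from the algebraic properties of McNeal's pseudodistance.
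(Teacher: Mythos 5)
Your proof is correct and follows essentially the same route as the paper: both reduce the Gromov four-point condition, via Theorem \ref{main}, to the multiplicative inequality $\rho(\omega,z)\rho(x,y)\lesssim\rho(\omega,y)\rho(x,z)\vee\rho(\omega,x)\rho(y,z)$ for $\rho(x,y)=M(x,y)+|r(x)|\vee|r(y)|$, and both deduce this from the quasi-triangle property of $M$ in Proposition \ref{pseudodis}. The paper's case analysis (``WLOG the factor corresponding to $\rho(\omega,y)$ is the smallest on the right-hand side'') and your quasi-ultrametric two-level case tree are merely two equivalent ways of organizing the same short combinatorial step.
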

\begin{proof}
First, for any point $p\in\partial\Omega$, by Theorem \ref{main} there exists a open neighbourhood $U$ such that the estimates hold for every points in $\Omega\cap U.$ Similar to the proof in \cite{balogh2000gromov}, we will prove the following equivalent statement: for all $x_{i}(i=1,2,3,4)\in\Omega\cap U,$ there holds
\begin{align}\label{gromov}
 \nonumber K_{\Omega}(x_1,x_2)&+K_{\Omega}(x_3,x_4)\\
 &\leq (K_{\Omega}(x_1,x_3)+K_{\Omega}(x_2,x_4))\vee (K_{\Omega}(x_1,x_4)+K_{\Omega}(x_2,x_3))+C.
 \end{align}
 Suppose we are given numbers $r_{ij}\geq 0$ such that $r_{ij}\leq Cr_{ji}$ and $r_{ij}\leq C(r_{ik}+r_{kj})$ for $i,j,k\in\{1,2,3,4\}.$ Then we claim that
 $r_{12}r_{34}\leq 4C^4 (r_{13}r_{24}\vee r_{14}r_{23})$.
 We may assume that $r_{13}$ is the smallest of the quantities $r_{ij}$ on the right hand side of the inequality. Then
 $$r_{12}\leq C(r_{13}+r_{32})\leq C(C+1)r_{23}\leq 2C^2 r_{23}$$ and $r_{34}\leq C(r_{31}+r_{14})\leq 2C^2 r_{14}$, then the inequality follows.

 Now let $x_i,i\in\{1,2,3,4\}$, be four arbitrary points in $\Omega\cap U$, and $h_i=|r(x_i)|$. Set $d_{ij}=M(x_i,x_j)$ and $r_{ij}=d_{ij}+h_i\vee h_j.$ Then
 \begin{align*}
 (d_{12}+&h_1\vee h_2)(d_{34}+h_3\vee h_4)\leq\\
 & 4C^4 (d_{13}+h_1\vee h_3)(d_{24}+h_2\vee h_4)\vee (d_{14}+h_1\vee h_4)(d_{24}+h_2\vee h_3)
  \end{align*}
  By Theorem \ref{main} the above display translates to (\ref{gromov}).
\end{proof}
\begin{lemma}\label{gromov3}
Let $\Omega=\{z\in\mathbb{C}^n:r(z)<0\}$ be a bounded convex domain with smooth boundary of finite type,  and let $p\in\partial \Omega$. Then there exists an open neighbourhood $U$ of $p$ and $\delta>0$ such that for any $ x,y,z\in\Omega\cap U$, the geodesic triangle $\triangle xyz$ has the $\delta$-thin property.
\end{lemma}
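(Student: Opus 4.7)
The plan is to derive the local $\delta$-thin triangle property from the local 4-point Gromov product inequality of Lemma~\ref{gromov2}, by adapting the standard implication ``4-point condition implies thin triangles'' from the theory of Gromov hyperbolic geodesic metric spaces (cf.~\cite{1990Sur}). Let $U_0$ be the neighborhood of $p$ furnished by Lemma~\ref{gromov2}, with constant $C$, and let $U\subset\subset U_0$ be smaller, with the precise smallness to be fixed at the end.

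Given $x,y,z\in\Omega\cap U$ and any point $w$ on the Kobayashi geodesic segment $[xy]$, I would first apply Lemma~\ref{gromov2} to the quadruple $\{x,y,z,w\}$. Since $(x|y)_w=0$, the inequality forces $\min\{(x|z)_w,(z|y)_w\}\leq C$, and without loss of generality $(x|z)_w\leq C$, i.e.,
\begin{equation*}
K_\Omega(x,w)+K_\Omega(w,z)-K_\Omega(x,z)\leq 2C.
\end{equation*}
Next I would pick a companion point $w'$ on $[xz]$: if $K_\Omega(x,w)\leq K_\Omega(x,z)$, take $w'\in[xz]$ with $K_\Omega(x,w')=K_\Omega(x,w)$; otherwise set $w'=z$. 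In the latter case $K_\Omega(w,w')=K_\Omega(w,z)\leq 2C$ is immediate. In the former, the 4-point form of the inequality, applied to $\{x,z,w,w'\}$, reads
\begin{equation*}
K_\Omega(x,z)+K_\Omega(w,w')\leq \max\{K_\Omega(x,w)+K_\Omega(z,w'),\, K_\Omega(x,w')+K_\Omega(z,w)\}+2C,
\end{equation*}
and using $K_\Omega(x,w')=K_\Omega(x,w)$ together with $K_\Omega(x,w')+K_\Omega(w',z)=K_\Omega(x,z)$ the right side collapses to $K_\Omega(x,w)+K_\Omega(z,w)+2C$, yielding $K_\Omega(w,w')\leq 2(x|z)_w+2C\leq 4C$. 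By symmetry the same estimate holds for points on the other two sides of the triangle, so $\triangle xyz$ is $\delta$-thin with $\delta=4C$.

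The main obstacle is that the auxiliary points $w,w'$ lie on Kobayashi geodesics and may a priori leave $\Omega\cap U_0$, in which case Lemma~\ref{gromov2} does not apply to the required quadruples. I would handle this by further shrinking $U$ so that every Kobayashi geodesic joining two points of $\Omega\cap U$ stays inside $\Omega\cap U_0$. Theorem~\ref{main} provides the necessary control: any $w$ on the geodesic from $x$ to $y$ satisfies $K_\Omega(x,w)\leq K_\Omega(x,y)\asymp g(x,y)$, and combining this with the lower bounds in Lemmas~\ref{est3} and~\ref{est4} shows that both the depth $|r(w)|$ and the $M$-spread of $w$ away from $x,y$ are uniformly small provided $x,y$ lie in a sufficiently small neighborhood of $p$. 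Choosing $U$ small enough that these estimates force every geodesic between $\Omega\cap U$ points to remain in $\Omega\cap U_0$ makes the preceding argument rigorous and yields the $\delta$-thin property with $\delta=4C$.
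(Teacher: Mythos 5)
Your argument is correct and follows essentially the same strategy as the paper's: both proofs convert the local four-point Gromov-product inequality of Lemma~\ref{gromov2} into the thin-triangle property via a ``matching'' point on the adjacent side, and both arrive at the same constant $\delta=4C$. The only difference is cosmetic: you first apply the inequality with base point $w\in[x,y]$ to obtain $(x|z)_w\leq C$ (WLOG) and then apply the distance form of the four-point condition to the quadruple $\{x,z,w,w'\}$, whereas the paper matches $u$ to $v$ via the decomposition $K_\Omega(x,y)=(y|z)_x+(x|z)_y$ and then chains the inequality twice with base point $x$ to get $(u|v)_x\geq(u|y)_x\wedge(y|z)_x\wedge(z|v)_x-2C$. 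Both are standard derivations of ``four-point implies thin'' and are mutually interchangeable.

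The one substantive remark concerns your closing paragraph on geodesics possibly exiting $\Omega\cap U_0$. You are right to flag this: the paper's own proof also applies the inequality of Lemma~\ref{gromov2} to points $u\in[x,y]$, $v\in[x,z]$ without checking that they lie in $\Omega\cap U$, so this is a real issue. However, your sketch of the fix does not quite close it. The depth control is fine: from $K_\Omega(x,w)+K_\Omega(w,y)=K_\Omega(x,y)$, Lemma~\ref{est4}, and the upper bound in Theorem~\ref{main} one deduces $\delta_\Omega(w)\lesssim M(x,y)+\delta_\Omega(x)\vee\delta_\Omega(y)$, which is small if $U$ is small. But that only says $w$ is close to $\partial\Omega$ somewhere, not close to $p$. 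Controlling the lateral position of $w$ (your ``$M$-spread'') requires more than Lemmas~\ref{est3}--\ref{est4}: you cannot apply the lower bound of Theorem~\ref{main} to the pair $(x,w)$ without already knowing $w\in\Omega\cap U_0$, so the argument as written is circular. The clean way to fill the gap is to invoke a visibility property of Kobayashi geodesics on convex finite-type domains (established by Bharali--Zimmer and others, and referenced via the localization literature cited after Lemma~\ref{local}), which directly forces geodesics between points near $p$ to remain near $p$. As it stands, that last step is a sketch rather than a proof.
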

\begin{proof}
Without loss of generality, we only prove $[x,y]$ lies in the $\delta$-neighbourhood of the other sides. By Lemma \ref{gromov2}, we know that (\ref{gromov1}) holds in $\Omega\cap U$.
Since
  $$K_{\Omega}(x,y)=(y|z)_{x}+(x|z)_{y}$$
  for all $ u\in[x,y]$, we have $K_{\Omega}(x,u)\leq(y|z)_{x}$ or $K_{\Omega}(y,u)\leq (x|z)_{y}.$
  Suppose $K_{\Omega}(x,u)\leq(y|z)_{x}$, by the triangle inequality $(y|z)_{x}\leq K_{\Omega}(x,z)$, thus we can choose a point $v\in[x,z]$ such that $K_{\Omega}(x,v)=K_{\Omega}(x,u)$. Then
$$(u|v)_{x}=\frac{1}{2}(K_{\Omega}(x,u)+K_{\Omega}(x,v)-K_{\Omega}(u,v))=K_{\Omega}(x,u)-\frac{1}{2}K_{\Omega}(u,v).$$
  On the other hand, by (\ref{gromov1})
  \begin{align*}
  (u|v)_{x}&\geq (u|y)_{x}\wedge(y|z)_{x}\wedge(z|v)_{x}-2C\\
  &\geq K_{\Omega}(x,u)-2C.
  \end{align*}
  Thus,
  $$K_{\Omega}(u,v)\leq 4C.$$
  Similarly, if we suppose $K_{\Omega}(y,u)\leq(x|z)_{y}$, then we can find an analogous $v\in[y,z]$ and conclude $K_{\Omega}(u,v)\leq 4C.$
  Thus, let $\delta= 4C$ and we complete the proof.
\end{proof}
\begin{lemma}\label{visibility}
Let $\Omega=\{z\in\mathbb{C}^n:r(z)<0\}$ be a bounded convex domain with smooth boundary of finite type  $L$, let $p\in\partial \Omega$, and let $U$ be the open neighborhood of $p$ where the polydisks and local pseudodistance can be constructed. Then for any $x,y\in\Omega\cap U$, there exists $z\in[x,y]$ such that
 \begin{align*}
 \delta_{\Omega}(z)\gtrsim M(x,y)
 \end{align*}
\end{lemma}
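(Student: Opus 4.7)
The plan is to confront the upper bound of Theorem \ref{main} with the intermediate lower bound that already appears inside its own proof. Let $[x,y]$ denote a Kobayashi geodesic from $x$ to $y$ (which exists since the Kobayashi distance on a bounded convex domain is complete), and choose $z_0\in[x,y]$ realizing $H:=\max_{w\in[x,y]}|r(w)|$; Lemma \ref{dist}(d) gives $\delta_\Omega(z_0)\asymp H$, and $H\geq|r(x)|\vee|r(y)|$ trivially since $x,y\in[x,y]$.

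First I would dispose of the easy case $M(x,y)\leq|r(x)|\vee|r(y)|$, where the conclusion holds by taking $z\in\{x,y\}$ farther from $\partial\Omega$. In the remaining case $M(x,y)>|r(x)|\vee|r(y)|$, Theorem \ref{main} specializes to
$$K_\Omega(x,y)\leq\log\frac{M(x,y)}{\sqrt{|r(x)r(y)|}}+C,$$
while the analysis of Case 2 of the lower bound in Theorem \ref{main}, applied to the curve $\gamma=[x,y]$ with maximum $|r|$-value equal to $H$, yields the pre-minimization estimate
$$L_k(\gamma)\geq\log\frac{H}{\sqrt{|r(x)r(y)|}}+2c_0\left(\frac{M(x,y)}{H}\right)^\epsilon-C.$$
Since $\gamma$ is a geodesic, $L_k(\gamma)=K_\Omega(x,y)$, so subtracting the two bounds and writing $t:=M(x,y)/H$ gives
$$2c_0\,t^\epsilon\leq\log t+C.$$

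Because $t^\epsilon$ dominates $\log t$ as $t\to\infty$, this forces $t\leq t_0$ for some universal constant $t_0$, hence $\delta_\Omega(z_0)\asymp H\geq M(x,y)/t_0\gtrsim M(x,y)$, which is the claim. The one delicate point is making sure that the intermediate lower bound I invoke is really available on an arbitrary rectifiable curve from $x$ to $y$ (rather than just on the minimizing curve produced at the end of the proof of Theorem \ref{main}); but that is exactly the inequality stated just before the function $f(t)$ is minimized there, so no new argument is needed, and the rest is elementary calculus comparing $t^\epsilon$ with $\log t$.
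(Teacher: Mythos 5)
Your argument is correct, but it takes a genuinely different route from the paper. The paper's proof of Lemma~\ref{visibility} invokes the thin-triangle property established in Lemma~\ref{gromov3}: it fixes a base point $\omega$, picks the point $z\in[x,y]$ splitting the geodesic according to the Gromov products $(y|\omega)_x$ and $(x|\omega)_y$, proves $(x|y)_\omega\geq K_\Omega(\omega,z)-2\delta$ via the $\delta$-thin property, and then compares with the explicit formula $(x|y)_\omega=\frac{1}{2}\log\frac{1}{M(x,y)}\pm C$ coming from Theorem~\ref{main} together with Lemma~\ref{est4}. Your approach bypasses Gromov hyperbolicity entirely: you simply run Case~2 of the lower-bound estimate on the actual geodesic $\gamma=[x,y]$ (where $L_k(\gamma)=K_\Omega(x,y)$), confront the resulting pre-minimization bound $K_\Omega(x,y)\geq\log\frac{H}{\sqrt{|r(x)r(y)|}}+2c_0(M(x,y)/H)^\epsilon-C$ with the matching upper bound $K_\Omega(x,y)\leq\log\frac{M(x,y)}{\sqrt{|r(x)r(y)|}}+C$, and deduce $2c_0t^\epsilon\leq\log t+C'$ for $t=M(x,y)/H$, which forces $t$ to be universally bounded and hence $\delta_\Omega(z_0)\asymp H\gtrsim M(x,y)$. (The case $H\geq M(x,y)$, where the paper's pre-minimization bound is stated only after the inequality has already been closed, is harmless since $t\leq 1$ trivially there.) The trade-offs: the paper's proof is modular, resting cleanly on the already-proved Lemma~\ref{gromov3}; yours is logically more economical and would let one prove the visibility lemma before, and independently of, local Gromov hyperbolicity, at the cost of reaching back into an intermediate display buried in the proof of Theorem~\ref{main} rather than a stated result.
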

\begin{rmk}
This Lemma is an improvement of Lemma 3.1 in \cite{2022Bi}.
\end{rmk}
\begin{proof}
First, fix a point $\omega\in\Omega\cap U$. Choose $z\in[x,y]$ such that $K_{\Omega}(x,z)=(y|\omega)_{x}$ and $K_{\Omega}(y,z)=(x|\omega)_{y}$, we show that
$$(x|y)_{\omega}\geq K_{\Omega}(\omega,z)-2\delta.$$
Here we use the same $\delta$ as in the $\delta$-thin property.
By the proof of the above lemma we can find $z'\in[x,\omega]$  such that $K_{\Omega}(z,z')\leq \delta$.
Since
$$K_{\Omega}(z,\omega)\leq K_{\Omega}(z,z')+K_{\Omega}(z',\omega)$$
and
$$K_{\Omega}(z,x)\leq K_{\Omega}(z,z')+K_{\Omega}(z',x)$$
thus we have
\begin{align*}
K_{\Omega}(z,\omega)&\leq 2K_{\Omega}(z,z')+K_{\Omega}(z',\omega)+K_{\Omega}(z',x)-K_{\Omega}(z,x)\\
&\leq K_{\Omega}(x,\omega)-K_{\Omega}(z,x)+2\delta.
\end{align*}
Similarly, we can find $z''\in[y,\omega]$  such that $K_{\Omega}(z,z'')\leq \delta$ and we have
$$K_{\Omega}(z,\omega)\leq K_{\Omega}(y,\omega)-K_{\Omega}(z,y)+2\delta$$
which gives
\begin{align}\label{est7}
(x|y)_{\omega}\geq K_{\Omega}(\omega,z)-2\delta.
\end{align}
On the other hand, by Theorem \ref{main}, we have
\begin{align}\label{visual}
(x|y)_{\omega}&= \frac{1}{2}\log \frac{(M(x,\omega)+|r(x)|\vee |r(\omega)|)(M(y,\omega)+|r(y)|\vee |r(\omega)|)}{(M(x,y)+|r(x)|\vee |r(y)|)|r(\omega)|}\pm C\\
\nonumber &\leq\frac{1}{2}\log\frac{1}{M(x,y)}\pm C.
\end{align}
Also, by Lemma \ref{est4},
\begin{align}\label{est8}
K_{\Omega}(\omega,z)\geq\frac{1}{2}\log\frac{\delta_{\Omega}(\omega)}{\delta_{\Omega}(z)}.
\end{align}
Thus, by (\ref{est7}), (\ref{visual}) and (\ref{est8}) we have $$\delta_{\Omega}(z)\gtrsim M(x,y).$$
\end{proof}
Actually, by (\ref{pseu}) we have
\begin{align}\label{visual2}
\delta_{\Omega}(z)\gtrsim|x-y|^{L}.\end{align}
 \begin{thm}\label{gromov4}
 Let $\Omega$ be a bounded convex domain with smooth boundary of finite type, then $(\Omega,K_{\Omega})$ is a Gromov hyperbolic space.
 \end{thm}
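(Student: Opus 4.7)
The plan is to verify the \emph{global} thin-triangle property, bootstrapping from the local version in Lemma \ref{gromov3}; by the proposition characterizing Gromov hyperbolicity via thin triangles, this will complete the proof. Since $\partial\Omega$ is compact, I cover it by finitely many neighborhoods $U_1,\ldots,U_N$, each equipped with a local thin-triangle constant $\delta_i$ from Lemma \ref{gromov3}; set $\delta_0:=\max_i\delta_i$. A Lebesgue-number argument produces $\eta>0$ so small that every $x\in\Omega$ with $\delta_{\Omega}(x)<\eta$ lies in some $U_i\cap\Omega$ together with a fixed-size local neighborhood. The complementary ``deep'' set $A_\eta:=\{x\in\Omega:\delta_{\Omega}(x)\ge\eta\}$ is compact in $\Omega$, so $K_\Omega$ has finite diameter $D=D(\eta)<\infty$ on $A_\eta$.

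Next, I fix a geodesic triangle $\triangle xyz$ and a point $u$ on a side, say $u\in[x,y]$, and look for a point of $[x,z]\cup[y,z]$ at bounded $K_\Omega$-distance from $u$. If $u$ lies in the boundary strip $\{\delta_{\Omega}<\eta\}$, the visibility-type estimate from Lemma \ref{visibility} together with (\ref{visual2}) forces the arcs of $[x,z]$ and $[y,z]$ at comparable $K_\Omega$-distance from $u$ to be trapped inside a common $U_i\cap\Omega$, so Lemma \ref{gromov3} produces the required nearby point with constant $\delta_0$. If instead $u\in A_\eta$, I track the points $v\in[x,z]$ and $w\in[y,z]$ at the same $K_\Omega$-distance from $x$ and $y$ respectively as $u$: when $v$ or $w$ also lies in $A_\eta$, finite diameter gives $K_\Omega(u,v)\le D$ (resp.\ $K_\Omega(u,w)\le D$); when instead the corresponding tracking point retreats into the boundary strip, I invoke the local thin-triangle property applied to a subtriangle whose three vertices lie in a common $U_i\cap\Omega$.

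The technical heart, and main obstacle, is to rule out the possibility that the geodesic sides oscillate arbitrarily many times between $A_\eta$ and the boundary strips: this is precisely where Lemma \ref{visibility} is indispensable, as it provides quantitative control on how deep a Kobayashi geodesic must dive into $\Omega$ as a function of the pseudodistance $M$ between its endpoints. Once this stability is established, each geodesic triangle decomposes into a uniformly bounded number of pieces, each controlled by either $\delta_0$ or $D$, and their aggregation yields a universal thin-triangle constant $\delta'=\delta'(\delta_0,D,N)$; the Gromov hyperbolicity of $(\Omega,K_\Omega)$ then follows from the proposition in Section~5.
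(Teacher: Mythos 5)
Your overall strategy---cover $\partial\Omega$ by finitely many neighborhoods where the local thin-triangle property of Lemma \ref{gromov3} holds, add a compact ``deep'' set on which the Kobayashi distance has finite diameter, and then patch together a uniform thin-triangle constant---is exactly the one the paper follows. So the ingredients are right. But the proposal as written has a genuine gap, and you flag it yourself: you describe ``ruling out the possibility that the geodesic sides oscillate arbitrarily many times between $A_\eta$ and the boundary strips'' as ``the main obstacle,'' and then say ``once this stability is established'' the rest follows. That clause is doing all the work, and nothing in your write-up establishes it. The assertion that the arcs of $[x,z]$ and $[y,z]$ at ``comparable $K_\Omega$-distance from $u$'' are ``trapped inside a common $U_i\cap\Omega$'' is not justified by Lemma \ref{visibility} alone: that lemma controls how deep a geodesic between two points of $U_i$ must dive, but it does not by itself localize, to a single chart, the portions of the two \emph{other} sides that happen to be at matching parameter distance. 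Similarly, your claim that when the tracking point ``retreats into the boundary strip'' one can ``invoke the local thin-triangle property applied to a subtriangle whose three vertices lie in a common $U_i\cap\Omega$'' presupposes such a subtriangle exists, which is precisely what needs to be constructed.

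The paper avoids the oscillation problem rather than solving it, and this is the idea you are missing. Instead of picking a point $u$ on one side and looking for a nearby point on the other two, it builds a \emph{compact} open set $V_{N+1}\subset\subset\Omega$ containing $\Omega\setminus\bigcup_i V_i$, enlarged (using (\ref{visual2})) so that whenever $x\in V_i$ and $y\notin U_i$, the geodesic $[x,y]$ is \emph{forced} to meet $U_i\cap V_{N+1}$. One then marks only the \emph{first} entry point of each side into $\partial V_{N+1}\cap U_i$ and the \emph{last} exit point from $\partial V_{N+1}\cap U_j$, which gives four auxiliary vertices $x',y',x'',y''$. This decomposes the triangle into a fixed, bounded configuration: a quadrilateral $x x' y' y$ with all vertices in a single $U_i$ (handled by Lemma \ref{gromov3}), a triangle $z x'' y''$ with all vertices in $U_j$ (same), and a quadrilateral $x'y'x''y''$ with all vertices in $\overline{V}_{N+1}$ (handled by the finite Kobayashi diameter $M$ of $\overline{V}_{N+1}$). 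Oscillations of the geodesics in and out of $V_{N+1}$ simply never enter the argument, because only the extremal entry/exit points are used. Adopting this first-entry/last-exit decomposition would repair your proposal; as it stands, the hard step is identified but not carried out.
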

 \begin{proof}
%

  By Lemma \ref{gromov2} and the compactness of $\partial\Omega$, we claim we can choose finite open sets $\{U_{i}\}_{i=1}^{N}$ and $\{V_{i}\}_{i=1}^{N}$ such that $V_{i}\subset\subset U_{i}$ and $\partial\Omega\subset\bigcup\limits_{i=1}^{N}V_{i}$ and for every $U_{i}$, there exists $C_{i}>0$ such that $\forall x,y,z,\omega\in\Omega\cap U_{i}$,
 $$(x|y)_{\omega}\geq(x,z)_{\omega}\wedge(z,y)_{\omega}-C_{i}.$$

Let $K=\Omega/ \bigcup\limits_{i=1}^{N} V_{i}$, then $K$ is a compact set in $\Omega$.
We can choose an open set $V_{N+1}\subset\subset\Omega$, $V_{N+1}\supset K$, such that $\forall i\in\{1,\cdots,N\}$ if $x\in V_{i}$ and $y\notin U_{i}$, then $[x,y]\cap U_{i}\cap V_{N+1}\neq \emptyset.$
In fact, by (\ref{visual2}), there is $\alpha_i>0$ such that for any $x,y\in U_{i}$ for some $i\in\{1,\cdots,N\}$, $\exists z\in[x,y]$ satisfying $\delta_{\Omega}(z)\geq \alpha_{i}|x-y|^{L}$. Let
$$A=\min\{|x_1-x_2|:x_1\in\partial \overline{U_{j}}\cap \overline{\Omega},x_2\in \overline{V_j}\cap \overline{\Omega},j=1,\cdots, N\}$$
and $\alpha=\min\{\alpha_{i}:i=1,\cdots,N\}$.
Then, we just choose $V_{N+1}$ which satisfies $V_{N+1}\supset K$ and $V_{N+1}\supset \{z\in\Omega:\delta_{\Omega}(z)\geq \alpha A^{L}\}$.
For convenience we denote $U_{N+1}=V_{N+1}.$

Denote $M=\max\{K_{\Omega}(x,y):x,y\in \overline{V}_{N+1}\}$ and $C=\max\{C_{i}:i=1,\cdots,N\}$.  We will show that for any $x,y,z\in\Omega$,  the geodesic triangle $\triangle xyz$ has the  $(12C+2M)$-thin triangle property.

 (1). Suppose $x,y,z\in V_{i}$ for some $i\in\{1,\cdots,N+1\}$.

  When $i\in\{1,\cdots,N\}$, by Lemma \ref{gromov3} we know that each side of the geodesic triangle is contained in a $4C$-neighbourhood of the other two sides.

  When $i=N+1$, obviously each side of the geodesic triangle is contained in $2M$-neighbourhood of the other two sides.
  
Thus the geodesic triangle $\triangle xyz$ has the $(2M+4C)$-thin triangle property.


  (2). Suppose $x,y\in V_{i}$ for some $i\in\{1,\cdots,N+1\}$ and $z\in V_{j}$ for some $j\in\{1,\cdots,N+1\},j\neq i$. We may also assume that $z\notin U_{i}$, else $x,y,z\in U_{i}$ and one can repeat the proof in case (1).

  If $i,j\neq N+1$, then $$[x,z]\cap \partial V_{N+1}\cap U_{i}\neq\emptyset$$
   and
   $$[x,z]\cap \partial V_{N+1}\cap U_{j}\neq\emptyset$$
   by the choice of $V_{N+1}.$
   Write the geodesic $[x,z]$ to be $\gamma_1(t),t\in[0,1], \gamma_1(0)=x$ and $ \gamma_1(1)=z$. Note the set
  $$T_1=\{t\in[0,1]:\gamma_1(t)\in\partial V_{N+1}\cap U_i\}$$
  is not empty.
  Let $t_1=\min\{t\in T_1\}$ and $x'=\gamma_1(t_1)$. Let $$T_2=\{t\in[0,1]:\gamma_1(t)\in\partial V_{N+1}\cap U_j\}\neq\emptyset,$$ let $t_2=\max\{t\in T_2\}$ and $x''=\gamma_1(t_2)$. By considering the geodesic $[y,z]$ we can define $y'$ and $y''$ similarly, then $x',y'\in U_{i}\cap\partial V_{N+1}$ and $x'',y''\in U_{j}\cap\partial V_{N+1}$.

  Then by case (1) we can see that each side of the geodesic quadrilateral $xx'y'y$ lies in the $8C$-neighbourhood of the other sides and each side of the geodesic triangle $\triangle zx''y''$ lies in the $4C$-neighbourhood of the other sides. Besides, all of $x',y',x'',y''$ lie in $\overline{V}_{N+1}$, which means each side of the geodesic quadrilateral $x'y'x''y''$ lies in the $2M$-neighbourhood of the others.
   Thus,  the geodesic triangle $\triangle xyz$ has the $(12C+2M)$-thin triangle property.

   If $i=N+1$ or $j=N+1$, the proof is similar.

  (3). Suppose $x,y,z$ lies in different $V_{i}, V_{j}, V_{k},i,j,k\in\{1,\cdots,N+1\}.$

  The proof is similar to the proof in case (2). Thus we complete the proof.
 \end{proof}
 \begin{rmk}
 Note that for fixed $\omega\in\Omega \cap V_{i}$, if $x,y\in\Omega \cap V_{i}$, we have
 $$(x|y)_{\omega}= \frac{1}{2}\log \frac{(M_i(x,\omega)+|r(x)|\vee |r(\omega)|)(M_i(y,\omega)+|r(y)|\vee |r(\omega)|)}{(M_i(x,y)+|r(x)|\vee |r(y)|)|r(\omega)|}\pm C$$
 Now suppose a sequence of $\{x_k\}$ in $(\Omega,K_{\Omega})$ is a Gromov sequence, then by taking a subsequence if necessary, we know there exists $i\in\{1,\cdots,N\}$, such that $\{x_{k_{j}}\}\in V_{i}.$ Note that for fixed $\omega\in V_i \cap \Omega$, by Lemma \ref{visibility} and the triangle inequality, we have there exists $C>0$ such that for any $x\in V_i$ and $y\in \Omega/U_i$ (and so $[x,y]\cap \partial U_i\neq \emptyset$), we have
 $$(x|y)_{\omega}<C$$
 which means $\{x_{k}\}$ is contained in $U_{i}$ when $k$ large enough. Then $M_i(x_{m},x_{l})\rightarrow 0$ and $|r(x_{m})|\rightarrow 0$. This happens if and only if $x_{k}\rightarrow x_0\in\partial\Omega \cap \overline{V}_{i}$. Moreover, two sequences are equivalent if and only if their limits points on $\partial\Omega$ are the same. Thus we can identify the Gromov boundary $\partial_{G}\Omega$ with the Euclidean boundary as sets.

 Furthermore, for fixed $\omega\in V_{i}$ then for any $\xi,\eta \in\partial\Omega\cap V_{i}$,
 \begin{align}\label{visual3}
 e^{-(\xi|\eta)_{\omega}}\asymp M_{i}(\xi,\eta)
 \end{align}
 which means on $\partial \Omega\cap V_i$, $M_{i}$ is  snowflake equivalent to visual metrics on $\partial\Omega$.
 \end{rmk}

 Now we introduce the definition of rough quasi-isometric maps as follows.
\begin{defn}
Let $f: X\to Y$ be a map between metric spaces $X$ and $Y$, and let $\lambda\geq 1$ and $k\geq 0$ be constants. \begin{enumerate}
\item
If for all $x,\: y\in X$,
$$\lambda^{-1}d_X(x,y)-k\leq d_Y(f(x),f(y))\leq \lambda d_X(x,y)+k,$$
then $f$ is called an {\it $(\lambda, k)$-rough quasi-isometry} (cf. \cite{Schramm1999Embeddings}).
\item If $$\lambda d_X(x,y)-k\leq d_Y(f(x),f(y))\leq \lambda d_X(x,y)+k,$$
then $f$ is called an {\it $(\lambda, k)$-rough similarity.}
\item If $$ d_X(x,y)-k\leq d_Y(f(x),f(y))\leq  d_X(x,y)+k,$$
 then $f$ is called an {\it $k$-rough isometry}.
\end{enumerate}

\end{defn}
 \begin{defn}
Let $f: X\to Y$ be a bijection between metric spaces $X$ and $Y$, and $\lambda\geq 1,\alpha>0$ be a constant.
\item If for all $x,y\in X$
$$\frac{1}{\lambda} d_X(x,y)\leq d_Y(f(x),f(y))\leq \lambda d_X(x,y),$$
then $f$ is $\lambda$-bilipschitz; if
\item $$\frac{1}{\lambda} d_X(x,y)^{\alpha}\leq d_Y(f(x),f(y))\leq \lambda d_X(x,y)^{\alpha},$$
then $f$ is an $(\lambda,\alpha)$-snowflake map.
\item If for distinct points $x,y,z\in X$,
$$\frac{d_{Y}(f(x),f(y))}{d_{Y}(f(x),f(z))}\leq \eta_{\alpha,\lambda}\left(\frac{d_{X}(x,y)}{d_{X}(x,z)}\right).$$
then $f$ is an $(\lambda,\alpha)$-power quasisymmetry. Here
\begin{align*}
\eta_{\alpha,\lambda}(t)=\left\{
\begin{array}{ll}
\lambda t^{1/\alpha} & 0<t<1\\
\lambda t^{\alpha} & t\geq 1
\end{array}
\right.
\end{align*}
\end{defn}
\begin{proposition}[Section 6, \cite{Schramm1999Embeddings}]\label{ext}
Suppose $f: X\to Y$ be a rough quasi-isometry between Gromov hyperbolic metric spaces $X$ and $Y$. Then $f$ induces a power quasisymmetry $\tilde{f}:\partial_{G}X\rightarrow \partial_{G} Y$ with respect to the visual metrics. If $f$ is a rough similarity, then $\tilde{f}$ is a snowflake map. If $f$ is a rough isometry and $\partial_{G}X$ and $\partial_{G}Y$ are equipped with visual metrics satisfying (\ref{ee}) with the same number $\epsilon>0$, then $\tilde{f}$ is bilipschitz.
\end{proposition}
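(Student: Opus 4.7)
The proof proceeds in three stages: (i) extend $f$ to a boundary map $\tilde{f}:\partial_{G}X\to\partial_{G}Y$; (ii) convert the metric distortion of $f$ into a distortion estimate for visual metrics; and (iii) deduce the three distortion types.

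The heart of the argument is a two-sided bound on the Gromov product,
$$
\lambda^{-1}(x|y)_o - C \leq (f(x)|f(y))_{f(o)} \leq \lambda(x|y)_o + C,
$$
with $C$ depending only on $\lambda, k, \delta_X, \delta_Y$. Two standard facts are combined: in any $\delta$-hyperbolic geodesic space, $(x|y)_o$ equals $d(o,[x,y])$ up to an additive $O(\delta)$; and by the Morse lemma (stability of quasi-geodesics), the image under $f$ of a geodesic $\gamma$ from $x$ to $y$ in $X$ is a $(\lambda,k)$-quasi-geodesic in $Y$ that stays within a uniform Hausdorff distance $H=H(\lambda,k,\delta_Y)$ of any geodesic joining $f(x)$ and $f(y)$. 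Applying the rough-quasi-isometry inequality to $d_Y(f(o), f(z))$ for $z\in\gamma$ then yields both sides of the displayed bound.

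From this estimate, $f$ preserves Gromov sequences and their equivalence classes, so $\tilde{f}$ is well-defined on $\partial_G X$. Passing the bound to boundary points (via the $\inf$-$\liminf$ definition of $(a|b)_o$) and substituting into \eqref{ee} produces the two-sided power bound
$$
\rho_X(a,b)^{\epsilon_Y\lambda/\epsilon_X} \lesssim \rho_Y(\tilde{f}(a),\tilde{f}(b)) \lesssim \rho_X(a,b)^{\epsilon_Y/(\epsilon_X\lambda)}.
$$
In the rough-similarity case, the two Gromov-product bounds collapse to the equality $(f(x)|f(y))_{f(o)} = \lambda(x|y)_o + O(k)$; hence both exponents above coincide, giving $\rho_Y(\tilde{f}(a),\tilde{f}(b))\asymp\rho_X(a,b)^{\epsilon_Y\lambda/\epsilon_X}$, i.e.\ $\tilde{f}$ is a snowflake. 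For a rough isometry with $\epsilon_X=\epsilon_Y$, the exponent is $1$, and $\tilde{f}$ is bilipschitz.

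The main obstacle is the general power-quasisymmetric statement: since the two exponents in the displayed power bound differ, the naive ratio estimate is not scale-invariant and does not directly control $\rho_Y(\tilde{f}(x),\tilde{f}(y))/\rho_Y(\tilde{f}(x),\tilde{f}(z))$ by a single power of $\rho_X(x,y)/\rho_X(x,z)$. The fix couples the Gromov-product distortion with the four-point inequality: assuming $\rho_X(x,y)\leq\rho_X(x,z)$, i.e.\ $(x|y)_o\geq(x|z)_o$, the $\delta_X$-hyperbolicity of $X$ forces $(y|z)_o\geq(x|z)_o-\delta_X$; applying the Gromov-product bound to $(f(y)|f(z))_{f(o)}$ and the four-point inequality in $Y$ to $f(x),f(y),f(z)$ improves the lower bound on $(f(x)|f(y))_{f(o)}$ enough to cancel the absolute-value dependence and leave only the ratio, producing a scale-invariant bound of the required $\eta_{\alpha,\lambda}$-form. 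The detailed book-keeping is carried out in \cite{Schramm1999Embeddings}.
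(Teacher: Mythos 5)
The paper provides no proof of this proposition; it is cited directly from Section 6 of Bonk and Schramm, so there is no in-paper argument to compare against. Your sketch follows the standard route: derive the two-sided Gromov-product distortion $\lambda^{-1}(x|y)_o - C \leq (f(x)|f(y))_{f(o)} \leq \lambda(x|y)_o + C$ from the Morse lemma together with the identification $(x|y)_o = d(o,[x,y]) + O(\delta)$, note that Gromov sequences and their equivalences are preserved so $\tilde f$ is defined, and exponentiate via the visual-metric asymptotic $\rho \asymp \exp(-\epsilon(\cdot|\cdot)_w)$. These steps, the resulting two-sided power bound on $\rho_Y$, and the snowflake (rough similarity) and bilipschitz (rough isometry with $\epsilon_X=\epsilon_Y$) conclusions are all correct as you state them.

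You also correctly identify the genuine difficulty of the general power-quasisymmetry claim: a bound $\rho_X^{\alpha_1}\lesssim\rho_Y\lesssim\rho_X^{\alpha_2}$ with $\alpha_1>\alpha_2$ is not itself a quasisymmetry estimate. However, the two ingredients you propose in the last paragraph do not, as combined, close the gap. Take $(x|y)_o\approx(x|z)_o=:R$ large. Your step gives $(y|z)_o\geq R-\delta_X$, hence $(f(y)|f(z))_{f(o)}\geq\lambda^{-1}R-C$, and hyperbolicity of $Y$ gives $(f(x)|f(y))_{f(o)}\geq\min\{(f(x)|f(z))_{f(o)},(f(y)|f(z))_{f(o)}\}-\delta_Y$. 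But nothing so far prevents $(f(x)|f(z))_{f(o)}$ from being as large as $\lambda R+C$ while $(f(x)|f(y))_{f(o)}$ is only guaranteed $\geq\lambda^{-1}R-C$, so the relevant difference $(f(x)|f(y))_{f(o)}-(f(x)|f(z))_{f(o)}$ is only bounded below by $(\lambda^{-1}-\lambda)R-C$, which tends to $-\infty$ as $R\to\infty$; this is exactly the unbounded additive term you claim to have cancelled. Ruling out this scenario is the real content of the Bonk--Schramm argument, and your sketch does not supply it. Since you flag the issue and defer to \cite{Schramm1999Embeddings}---as the paper itself does---the omission is acceptable at the level of the paper's own treatment, but your final paragraph is a signpost rather than an argument.
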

The following corollary follows from Theorem \ref{gromov4}, Proposition \ref{ext} and (\ref{visual3}).
 \begin{cor}
 Let $\Omega_{i}(i=1,2)$ be bounded convex domains with smooth boundary of finite type $L$, and let $f:\Omega_1\rightarrow\Omega_2$ be a rough quasi-isometry map. Then $f$ extends continuously to a map $\bar{f}:\bar{\Omega}_1\rightarrow\bar{\Omega}_2$. Suppose $p_1\in\partial\Omega_1$, $p_2=\bar{f}(p)\in\partial\Omega_2$. Then there exist open neighbourhoods $U_i(i=1,2)$ of $p_i$, the local pseudodistance $M_i$ defined in $U_i$, some $\lambda>1 $ and $\alpha>0$ such that for any $\xi,\eta,\theta\in\partial \Omega\cap U_1$ 
 $$\frac{M_2(\bar{f}(\xi),\bar{f}(\eta))}{M_2(\bar{f}(\xi),\bar{f}(\theta))}\leq  \eta_{\alpha,\lambda}\left(\frac{M_1(\xi,\eta)}{M_1(\xi,\theta)}\right).$$
 If $f$ is a rough similarity, then
 $$\frac{1}{\lambda}M_1(\xi,\eta)^{\alpha}\lesssim M_2(\bar{f}(\xi),\bar{f}(\eta))\lesssim \lambda M_1(\xi,\eta)^{\alpha}.$$
 If $f$ is a rough isometry, then
 $$M_1(\xi,\eta)\lesssim M_2(\bar{f}(\xi),\bar{f}(\eta))\lesssim M_1(\xi,\eta)$$

 \end{cor}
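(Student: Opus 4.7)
The plan is to deduce the corollary directly from three ingredients already assembled in the paper: the Gromov hyperbolicity of $(\Omega_i,K_{\Omega_i})$ from Theorem \ref{gromov4}; the boundary behaviour of rough quasi-isometries between Gromov hyperbolic spaces from Proposition \ref{ext}; and the local relation (\ref{visual3}) between the pseudodistance $M_i$ and a visual metric on $\partial_G\Omega_i$, together with the identification $\partial_G\Omega_i\cong\partial\Omega_i$ explained in the Remark preceding the corollary.

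First I would observe that, by Theorem \ref{gromov4}, both $(\Omega_i,K_{\Omega_i})$ are proper geodesic Gromov hyperbolic spaces, and by the preceding Remark the identification $\partial_G\Omega_i\cong\partial\Omega_i$ is compatible with the Euclidean topology (a Gromov sequence converges in $\partial_G\Omega_i$ if and only if it converges in the Euclidean sense). Applying Proposition \ref{ext} to the rough quasi-isometry $f$ therefore yields a continuous extension $\tilde f\colon\partial_G\Omega_1\to\partial_G\Omega_2$, which through the identification is precisely the claimed continuous extension $\bar f\colon\bar\Omega_1\to\bar\Omega_2$. Moreover $\tilde f$ is a power quasisymmetry between visual metrics $\rho^{(i)}_G\asymp\exp(-\epsilon_i(\cdot|\cdot)_{\omega_i})$ for chosen basepoints $\omega_i\in\Omega_i$.

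Next, fix $p_1\in\partial\Omega_1$, set $p_2=\bar f(p_1)$, and choose neighbourhoods $V_i\ni p_i$ where the construction of Section~3 applies; pick $\omega_i\in V_i\cap\Omega_i$. By (\ref{visual3}) one has $M_i\asymp(\rho^{(i)}_G)^{1/\epsilon_i}$ on $\partial\Omega_i\cap V_i$, i.e.\ $M_i$ is locally snowflake equivalent to $\rho^{(i)}_G$. Using continuity of $\bar f$ at $p_1$, I would shrink $U_1\subset V_1$ so that $\bar f(\overline{U_1}\cap\partial\Omega_1)\subset V_2\cap\partial\Omega_2$, and take $U_2\subset V_2$ to be any neighbourhood of $p_2$ containing this image. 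Since composing snowflake maps with a power quasisymmetry again produces a power quasisymmetry (the control function stays of the form $\eta_{\alpha',\lambda'}$), the quasisymmetric inequality for $\tilde f$ between $\rho^{(1)}_G$ and $\rho^{(2)}_G$ transfers into the stated inequality for $M_1,M_2$.

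The two sharper cases follow by the same bookkeeping. In the rough similarity case Proposition \ref{ext} upgrades $\tilde f$ to a snowflake between visual metrics, and concatenating with the two local snowflake equivalences gives the snowflake estimate between $M_1$ and $M_2$. In the rough isometry case one arranges (again by Proposition \ref{ext}) that the two visual metrics share the same parameter $\epsilon$, whereupon $\tilde f$ is bilipschitz on visual metrics and the snowflake exponents $1/\epsilon$ on either side cancel, yielding $M_1\asymp M_2$. The main obstacle is purely bookkeeping: tracking how the parameters $\lambda,\alpha,\epsilon_i$ combine under compositions of snowflake and quasisymmetric maps, and verifying that the Gromov-to-Euclidean identification of boundaries is a genuine homeomorphism near $p_i$ so that $\bar f$ really is continuous with respect to the Euclidean topology.
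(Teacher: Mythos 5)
Your proposal is correct and takes essentially the same route the paper does: the paper itself states only that the corollary "follows from Theorem \ref{gromov4}, Proposition \ref{ext} and (\ref{visual3})", and your argument simply expands that one-line indication—Gromov hyperbolicity plus the Bonk--Schramm boundary extension plus the local snowflake equivalence between $M_i$ and the visual metric—into a full derivation, with the standard composition bookkeeping for power quasisymmetries and snowflake maps.
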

\textbf{	Proof of Corollary \ref{1.3}:}
 	
 	\bigskip
 	
Since $f$ is holomorphic, for any $x,y\in\Omega_1$,
 $$K_{\Omega_2}(f(x),f(y))\leq K_{\Omega_1}(x,y).$$

 We first prove that $f$ extends to the boundary continuously. Let $U_i(i=1,2)$ be the open neighbourhoods of $p_i$, $M_i$ be the local pseudodistance defined in $U_i$. By (\ref{visual2}) , take $ V_2\subset\subset U_2$ then for any $x\in V_2$ and $y\notin U_2$, there exists $z\in[x,y]$ such that $\delta_{\Omega_2}(z)\gtrsim C$ where $$C=\inf\{|a-b|^{L}:a\in \partial U_2\cap \Omega,b\in V_2\}.$$ Then suppose there are two sequences $\{x_n\}$ and $\{y_n\}$ converging to $\xi\in\partial\Omega_1\cap U_1$. Assume that $f(x_n)\rightarrow \xi'\in\partial\Omega_2\cap U_2$ and
 $f(y_n)\rightarrow \eta'(\neq \xi')\in\partial\Omega_2$.

 We claim that $\eta'\in\partial\Omega_2\cap U_2$.
 If $\eta'\notin\partial\Omega_2\cap U_2$, consider the geodesic in $\Omega_2$ joining $f(x_n)$ and $f(y_n)$. We have $[f(x_n),f(y_n)]\cap \partial U_2\neq \emptyset$ for sufficient large $n$. Then take $\omega_n\in[f(x_n),f(y_n)]$ satisfying $\delta_{\Omega_2}(\omega_n)\gtrsim C$. By Lemma \ref{est4} and the Triangle inequality we have
 $$K_{\Omega_2}(f(x_n),f(y_n))\geq \frac{1}{2}\log\frac{C^2}{\delta_{\Omega_2}(f(x_n))\delta_{\Omega_2}(f(y_n))}.$$
 On the other hand, by Theorem \ref{main} and (d) in Lemma \ref{dist} we have 
 \begin{align*}
  K_{\Omega_2}(f(x_n),f(y_n))\leq K_{\Omega_1}(x_n,y_n)\leq \frac{1}{2}\log\frac{(M_1(x_n,y_n)+\delta_{\Omega_1}(x_n)\vee\delta_{\Omega_1}(y_n))^{2}}{\delta_{\Omega_1}(x_n)\delta_{\Omega_1}(y_n)}+C.
 \end{align*}
  Since $f$ is proper and $\Omega_i(i=1,2)$ are convex,
 by Hopf lemma(see Proposition 12 in \cite{article} for details), 
 $$\delta_{\Omega_2}(f(x))\leq\delta_{\Omega_1}(x)$$
 which causes a contradiction between the upper and lower bounds for $K_{\Omega_2}(f(x_n),f(y_n))$. So $\eta'\in\partial\Omega_2\cap U_2$ and we have
 $$M_2(f(x),f(y))\lesssim M_1(x,y)+\delta_{\Omega_1}(x)\vee\delta_{\Omega_1}(y).$$
Then it follows that $f$ extends continuously to the boundary. Let $V_1=U_1\cap f^{-1}(V_2)$ then for any $\xi,\eta\in\partial\Omega_1\cap \overline{V}_1$, applying Theorem \ref{main} and we have
 $$M_2(\bar{f}(\xi),\bar{f}(\eta))\lesssim M_{1}(\xi,\eta).$$\qed
\bigskip
\bibliography{reference}

\begin{thebibliography}{10}

\bibitem{balogh2000gromov}
Z.~M. Balogh and M.~Bonk.
\newblock Gromov hyperbolicity and the {K}obayashi metric on strictly
  pseudoconvex domains.
\newblock {\em Commentarii Mathematici Helvetici}, 75(3):504--533, 2000.

\bibitem{Schramm1999Embeddings}
M.~Bonk and O.~Schramm.
\newblock Embeddings of {G}romov hyperbolic spaces.
\newblock {\em Geometric $\&$ Functional Analysis}, 10(2):266--306, 1999.

\bibitem{2022Local}
F.~Bracci, H.~Gaussier, N.~Nikolov, and P.~J. Thomas.
\newblock Local and global visibility and {G}romov hyperbolicity of domains
  with respect to the kobayashi distance.
\newblock {\em arXiv e-prints}, 2022.

\bibitem{2021Visibility}
F.~Bracci, N.~Nikolov, and P.~J. Thomas.
\newblock Visibility of kobayashi geodesics in convex domains and related
  properties.
\newblock {\em Mathematische Zeitschrift}, 301(2011-2035), 2021.

\bibitem{BMHA}
M.~Bridson and A.~Haefliger.
\newblock Metric spaces of non-positive curvature.
\newblock {\em Fundamental Principles of Mathematical Sciences},
  319:10.1007/978--3--662--12494--9, 2009.

\bibitem{1988Convex}
J.~Bruna, A.~Nagel, and S.~Wainger.
\newblock Convex hypersurfaces and fourier transforms.
\newblock {\em Annals of Mathematics}, 127(2), 1988.

\bibitem{1989Estimates}
D.~W. Catlin.
\newblock Estimates of invariant metrics on pseudoconvex domains of dimension
  two.
\newblock {\em Mathematische Zeitschrift}, 200(3):429--466, 1989.

\bibitem{chen}
J-H. Chen.
\newblock Estimates of the invariant metrics on convex domains in
  $\mathbb{C}^n$.
\newblock {\em Purdue University}, Ph.D. Thesis, 1989.

\bibitem{1959Curvatures}
H.~Federer.
\newblock Curvatures measure.
\newblock {\em Transactions of the American Mathematical Society}, 93:418--491,
  1959.

\bibitem{2021Gromov}
M.~Fiacchi.
\newblock Gromov hyperbolicity of pseudoconvex finite type domains in
  $\mathbb{C}^2$.
\newblock {\em Mathematische Annalen}, 382(1-2):37--68, 2021.

\bibitem{forstneric1993proper}
F.~Forstneric.
\newblock Proper holomorphic mappings: a survey.
\newblock {\em Several complex variables (Stockholm, 1987/1988)}, 38:297--363,
  1993.

\bibitem{1990Sur}
E.~Ghys and Pdl Harpe.
\newblock Sur les groupes hyperboliques d'après {M}ikhael {G}romov.
\newblock {\em of progress in mathematics birkhäuser boston inc}, 1990.

\bibitem{Graham1975Boundary}
I.~Graham.
\newblock Boundary behavior of the {C}aratheodory and {K}obayashi metrics on
  strongly pseudoconvex domains in $\mathbb{C}^n$ with smooth boundary.
\newblock {\em Transactions of the American Mathematical Society},
  207(1):219--240, 1975.

\bibitem{2004Extremal}
T.~Hefer.
\newblock Extremal bases and {H}\"{o}lder estimates for partial deriv on convex
  domains of finite type.
\newblock {\em Michigan Mathematical Journal}, 52(3):p.573--602, 2004.

\bibitem{2005Estimation}
G.~Herbort.
\newblock Estimation on invariant distances on pseudoconvex domains of finite
  type in dimension two.
\newblock {\em Mathematische Ztschrift}, 251(3):673--703, 2005.

\bibitem{2005Invariant}
M.~Jarnicki and P.~Pflug.
\newblock Invariant distances and metrics in complex analysis.
\newblock {\em Dissertationes Mathematicae}, 430(430):1--192, 2005.

\bibitem{2020Localization}
J.~Liu and H.~Wang.
\newblock Localization of the kobayashi metric and applications.
\newblock {\em Mathematische Zeitschrift}, 297(867-883), 2020.

\bibitem{2022Bi}
J.~Liu, H.~Wang, and Q.~Zhou.
\newblock Bi-{H}\"{o}lder extensions of quasi-isometries on complex domains.
\newblock {\em The Journal of Geometric Analysis}, 32(2), 2022.

\bibitem{1992Convex}
J.~D. Mcneal.
\newblock Convex domains of finite type.
\newblock {\em Journal of Functional Analysis}, 108(2):361--373, 1992.

\bibitem{1994Estimates}
J.~D. Mcneal.
\newblock Estimates on the {B}ergman kernels of convex domains.
\newblock {\em Advances in Mathematics}, 109:108--139, 1994.

\bibitem{MercerComplex}
P.~R. Mercer.
\newblock Complex geodesics and iterates of holomorphic maps on convex domains
  in $\mathbb{C}^n$.
\newblock {\em Transactions of the American Mathematical Society},
  338(1):201--211.

\bibitem{article}
N~Nikolov.
\newblock Localization of invariant metrics.
\newblock {\em Archiv der Mathematik}, 79(07):67--73, 2002.

\bibitem{Nikolov2015Estimates}
N.~Nikolov and L.~Andreev.
\newblock Estimates of the {K}obayashi and quasi-hyperbolic distances.
\newblock {\em Annali di Matematica Pura ed Applicata}, 196(1):1--8, 2015.

\bibitem{Nikolov2013On}
N.~Nikolov, P.~Pflug, and J.~Thomas, P.
\newblock On different extremal bases for $\mathbb{C}$-convex domains.
\newblock {\em Proceedings of the American Mathematical Society}, 2013.

\bibitem{Nikolov2015The}
N.~Nikolov and M.~Trybu{\l}a.
\newblock The {K}obayashi balls of ($\mathbb{C}$-)convex domains.
\newblock {\em Monatshefte F\"{u}r Mathematik}, 177(4):627--635, 2015.

\bibitem{royden1971remarks}
H.~L. Royden.
\newblock Remarks on the {K}obayashi metric.
\newblock In {\em Several Complex Variables II Maryland 1970}, pages 125--137.
  Springer, 1971.

\bibitem{2021Quantitative}
P.~J. Thomas and N.~Nikolov.
\newblock Quantitative localization and comparison of invariant distances of
  domains in $\mathbb{C}^n$.
\newblock {\em arXiv e-prints}, 2021.

\bibitem{VenturiniPseudodistances}
S.~Venturini.
\newblock Pseudodistances and pseudometrics on real and complex manifolds.
\newblock {\em Annali Di Matematica Pura Ed Applicata}, 154(1):385--402, 1989.

\bibitem{Zimmer2016Gromov}
A.~Zimmer.
\newblock Gromov hyperbolicity and the {K}obayashi metric on convex domains of
  finite type.
\newblock {\em Mathematische Annalen}, 365(3):1--74, 2016.

\bibitem{ZimmerCharacterizing}
A.~Zimmer.
\newblock Characterizing domains by the limit set of their automorphism group.
\newblock {\em Advances in Mathematics}, 308:438--482, 2017.

\end{thebibliography}
\bibliographystyle{plain}{}
\end{document}